\numberwithin{equation}{section}
\newtheorem{proposition}{Proposition}[section]
\newtheorem{theorem}[proposition]{Theorem}
\newtheorem{lemma}[proposition]{Lemma}
\newtheorem{definition}[proposition]{Definition}
\newtheorem{remark}[proposition]{Remark}
\renewenvironment{proof}{\smallskip\noindent\emph{\textbf{Proof.}}%
  \hspace{1pt}}{\hspace{-5pt}{\nobreak\quad\nobreak\hfill\nobreak%
    $\square$\vspace{2pt}\par}\smallskip\goodbreak}
\newenvironment{proofof}[1]{\smallskip\noindent{\textbf{Proof~of~#1.}}%
  \hspace{1pt}}{\hspace{-5pt}{\nobreak\quad\nobreak\hfill\nobreak%
    $\square$\vspace{2pt}\par}\smallskip\goodbreak}
\newcommand{\pint}[1]{\mathaccent23{#1}}
\newcommand{\C}[1]{\mathbf{C}^{#1}}
\newcommand{\Cc}[1]{\mathbf{C}_c^{#1}}
\newcommand{\BV}{\mathbf{BV}}
\renewcommand{\L}[1]{{\mathbf{L}^#1}}
\newcommand{\W}[2]{{\mathbf{W}^{#1,#2}}}
\newcommand{\modulo}[1]{{\left|#1\right|}}
\newcommand{\norma}[1]{{\left\|#1\right\|}}
\newcommand{\caratt}[1]{{\chi_{\strut#1}}}
\newcommand{\reali}{{\mathbb{R}}}
\newcommand{\naturali}{{\mathbb{N}}}
\renewcommand{\epsilon}{\varepsilon}
\renewcommand{\phi}{\varphi}
\renewcommand{\theta}{\vartheta}
\renewcommand{\O}{\mathcal{O}(1)}
\newcommand{\tv}{\mathinner{\rm TV}}
\newcommand{\sgn}{\mathop{\rm sgn}}
\newcommand{\Lip}{\mathinner\mathbf{Lip}}
\renewcommand{\d}[1]{\mathinner{\mathrm{d}{#1}}}
\renewcommand{\div}{\mathinner{\mathop{{\rm div}}}}
\newcommand{\f}{\boldsymbol{f}}
\DeclareMathOperator{\Div}{Div}
\DeclareMathOperator{\grad}{grad}
\DeclareMathOperator{\tr}{tr}
\let\@fnsymbol\@arabic
\title{Rigorous Estimates on Balance Laws in Bounded Domains}
\author{Rinaldo M.~Colombo\footnote{Unit\`a INdAM, Universit\`a di
    Brescia, Via Branze 38, 25123 Brescia, Italy
    (\texttt{rinaldo.colombo@unibs.it}).}  \and Elena
  Rossi\footnote{Dipartimento di Matematica ed Applicazioni,
    Universit\`a di Milano-Bicocca, Via Cozzi 55, 20125 Milano, Italy
    (\texttt{e.rossi50@campus.unimib.it}).}}
\date{}
\begin{document}

\maketitle

%%%%%%%%%%%%%%%%%%%%%%%%

% \tableofcontents %%%%%%%%

%%%%%%%%%%%%%%%%%%%%%%%%

\begin{abstract}

  \noindent The initial -- boundary value problem for a general
  balance law in a bounded domain is proved to be well posed. Indeed,
  we show the existence of an entropy solution, its uniqueness and its
  Lipschitz continuity as a function of time, of the initial datum and
  of the boundary datum. The proof follows the general lines
  in~\cite{BardosLerouxNedelec}, striving to provide a rigorous
  treatment and detailed references.

  \medskip

  \noindent\textit{2000~Mathematics Subject Classification:} 35L50,
  35L65.

  \medskip

  \noindent\textit{Keywords:} Balance Laws, Initial -- Boundary Value
  Problem for Balance Laws.
\end{abstract}

\section{Introduction}
\label{sec:Intro}

This paper is devoted to the well posedness of a general scalar
balance law in an $n$-dimensional bounded domain, that is of
\begin{equation}
  \label{eq:1}
  \left\{
    \begin{array}{l@{\qquad}r@{\;}c@{\;}l}
      \partial_t u + \Div f (t,x,u) = F (t,x,u)
      &
      (t,x) & \in & I \times \Omega
      \\
      u (0,x) = u_o (x)
      & x & \in & \Omega
      \\
      u (t,\xi) =  u_{b} (t,\xi)
      & (t,\xi) & \in & I \times \partial\Omega \, .
    \end{array}
  \right.
\end{equation}
A key reference in this context is the classical paper by Bardos,
Leroux and N\'ed\'elec~\cite{BardosLerouxNedelec}. There, the
\emph{``correct''} definition of solution to~\eqref{eq:1} is selected,
in the spirit of the definition given by Kru\v zkov in the case
$\Omega = \reali^n$, see~\cite[Definition~1]{Kruzkov}. A proof of the
existence, uniqueness and continuous dependence of the solution from
the initial data is described in~\cite{BardosLerouxNedelec} in the
case $u_b = 0$.

For its relevance, since its publication, the well posedness
of~\eqref{eq:1} proved in~\cite{BardosLerouxNedelec} was refined or
explained in various text books, mostly in particular cases. For
instance, the case $f = f (u)$, $F = 0$ and $u_b = 0$ is detailed
in~\cite[Section~6.9]{DafermosBook}, while non homogeneous boundary
conditions are considered in~\cite[Section~15.1]{Serre2}, always in
the case $f = f (u)$, $F = 0$. A different type of boundary condition is considered, for instance, in~\cite{AndreianovSbihi}.

Below, we aim at a presentation which covers the general
case~\eqref{eq:1}, which is self contained and with precise references
to the elliptic or parabolic results required. Where possible, we also
seek to underline which regularity is necessary at which step. As a
result, we also obtain further estimates on the solution
to~\eqref{eq:1}.

As in~\cite{BardosLerouxNedelec} and~\cite{Kruzkov}, existence of
solution is obtained through the vanishing viscosity technique. The
usual term $\epsilon \, \Delta u$ is added on the right hand side of
the equation in~\eqref{eq:1}, turning it into the parabolic problem
\begin{equation}
  \label{eq:visco}
  \left\{
    \begin{array}{l@{\qquad}r@{\;}c@{\;}l}
      \partial_t u_\epsilon + \Div f (t,x,u_\epsilon) = F (t,x,u_\epsilon) + \epsilon \; \Delta u_\epsilon
      &
      (t,x) & \in & I \times \Omega
      \\
      u_\epsilon (0,x) = u_o (x)
      & x & \in & \Omega
      \\
      u_\epsilon (t,\xi) =  u_{b} (t,\xi)
      & (t,\xi) & \in & I \times \partial\Omega \,,
    \end{array}
  \right.
\end{equation}
which is first considered under stricter conditions (regularity and
compatibility of the data). Classical results from the parabolic
literature~\cite{FriedmanParabolic, LadysenkajaSolonnikovUralceva,
  LadysenkajaUralceva1961} can then be applied to ensure the existence
of a solution $u_\epsilon$ to~\eqref{eq:visco} in the case $u_b =
0$. To pass to the limit $\epsilon \to 0$, suitable bounds on
$u_\epsilon$ are necessary. First, the $\L\infty$
bound~\eqref{eq:Linfty} is fundamental. In this connection, we note
that the similar bound~\cite[Formula~(9)]{BardosLerouxNedelec} lacks a
term that should be present also in the case $u_b = 0$ considered
therein, refer to Section~\ref{sec:PP} for more details. Then, a
tricky $\BV$ bound allows to prove that the family of solutions
to~\eqref{eq:visco} is relatively compact in $\L1$, so that the limit
of any convergent subsequence of the $u_\epsilon$ solves~\eqref{eq:1}.

The next step is the problem with $u_b \neq 0$, a situation hardly
considered in the literature. To rigorously extend the existence of
solutions to the non homogeneous case, a time and space dependent
translation in the $u$ space of the solution to~\eqref{eq:1} is
necessary. This leads on one side to the need of solving an elliptic
problem and, on the other side, to prove that this translation does
indeed give a solution to~\eqref{eq:1}. An \emph{ad hoc} adaptation of
the doubling of variables technique from~\cite{Kruzkov} makes this
latter proof possible. However, to get the necessary estimates on the
translated balance law~\eqref{eq:23}, strict regularity requirements
on the elliptic problem are necessary, see
Lemma~\ref{lem:elliptic}. All this leads to keep, in the present work,
strict regularity assumptions on $u_b$ and the condition that $u_b
(0,\xi) = 0$ for all $\xi \in \partial\Omega$.

At this stage, the existence of solutions to~\eqref{eq:1} is proved,
under rather strict conditions on initial and boundary data. A further
use of the doubling of variables technique allows to prove the
Lipschitz continuous dependence of the solution form the initial and
boundary data. Remarkably, this technique allows to obtain a proof
that essentially relies only on the definition of solution, in a
generality wider than that available for the existence of
solutions. Finally, we thus obtain at once also the uniqueness of
solutions to~\eqref{eq:1} and to relax the necessary condition on the
initial datum.

The bounds on the total variation of the solution have a key role
throughout this work. First, they are obtained in the case $u_b = 0$,
similarly to what is done in~\cite{BardosLerouxNedelec},
see~\eqref{eq:tb_original}--\eqref{eq:21}. This bound depends on the
total variation of the initial datum and on various norms of the flow
$f$ and of the source $F$. The translation that allows to pass to the
non homogeneous problem leads to consider a translated balance law,
where the translated flow and source depend on an extension of the
boundary data, see~\eqref{eq:23}. Therefore, the bound on the total
variation of the solution to the translated problem depends on high
norms of the boundary datum, see~\eqref{eq:tv_bound}, which in the end
imposes to keep the condition $u_b (0,\xi) = 0$ for all $\xi
\in \partial\Omega$.

The paper is organized as follows. The next section is devoted to the
main result, which is obtained through estimates on the parabolic
approximation~\eqref{eq:visco} to~\eqref{eq:1}, presented in
Section~\ref{sec:PP}. Then, Section~\ref{sec:HP} accounts for the
hyperbolic results. All proofs are deferred to sections~\ref{sec:PPP},
\ref{sec:PHP} and~\ref{sec:PNDA}. A final appendix gathers useful
information on the trace operator.

\section{Notations, Definitions and Main Result}
\label{sec:NDA}

Throughout, $\reali^+ = \left[0, +\infty\right[$, $B (x,r)$ denotes
the open ball centered at $x$ with radius $r>0$. The closed real
interval $I = [0,T]$ is fixed, $T$ being completely arbitrary. For the
divergence of a vector field, possibly composed with another function,
we use the notation
\begin{displaymath}
  \Div f\left(t,x,u (t,x)\right)
  =
  \div f\left(t,x,u (t,x)\right)
  +
  \partial_u f\left(t,x,u (t,x)\right)
  \cdot
  \grad u (t,x) \,.
\end{displaymath}
The Lebesgue $n$ dimensional measure of $\Omega$ is denoted
$\mathcal{L}^n (\Omega)$, while the Hausdorff $n-1$ dimensional
measure of $\partial\Omega$ is $\mathcal{H}^{n-1} (\partial\Omega)$.

\noindent We use below the following standard assumptions, where $\ell
\in \naturali$ and $\alpha \in \left[0, 1\right[$:
\begin{description}
\item[($\boldsymbol{\Omega_{\ell,\alpha}}$)] $\Omega$ is a bounded
  open subset of $\reali^n$ with piecewise $\C{\ell,\alpha}$ boundary
  $\partial \Omega$ and exterior unit normal vector $\nu$.
\item[(f)] $f \in \C2 (\Sigma; \reali^n)$, $\partial_u f \in
  \L\infty(\Sigma; \reali^n)$, $\partial_u \div f \in \L\infty(\Sigma;
  \reali)$.
\item[(F)] $F \in \C2 (\Sigma; \reali)$, $\partial_u F \in \L{\infty}
  (\Sigma; \reali)$.
\item[(C)] $u_o \in (\BV \cap \L\infty) (\Omega ; \reali)$ and $u_{b}
  \in (\BV \cap \L\infty) (I \times \partial \Omega; \reali)$.
\end{description}

\noindent Above and in the sequel, we denote
\begin{displaymath}
  \Sigma = I \times \bar{\Omega} \times \reali \,.
\end{displaymath}
Above, we followed the choice in~\cite[Chapter~10]{Serre2} of a
boundary data with bounded total variation. Refer
to~\cite[Section~2.6]{MalekNecasRokyta} and~\cite{Otto1996} for a
generalization to $\L\infty$ boundary data. For a definition of
functions of bounded variation on a manifold, refer for instance
to~\cite[Definition~3.1]{KronerMullerStrehlau}.

Our starting point is the definition of solution, which originates in
the work of Vol'pert~\cite{Volpert1967}, see
also~\cite{BressanLectureNotes, DafermosBook, Kruzkov, Serre2}

\begin{definition}[{\cite[p.1028]{BardosLerouxNedelec}}]
  \label{def:sol}
  Let $\Omega$ satisfy~{\bf($\boldsymbol{\Omega_{2,0}}$)}. Fix $u_o$
  and $u_b$ satisfying~{\bf{(C)}}. A \emph{solution} to~\eqref{eq:1}
  on $I$ is a map $u \in (\L\infty \cap \BV) (I \times \Omega;
  \reali)$ such that for any test function $\phi \in \Cc2
  (\left]-\infty, T\right[ \times \reali^n; \reali^+)$ and for any $k
  \in \reali$,
  \begin{equation}
    \label{eq:4}
    \!\!\!\!
    \begin{array}{@{}rcl@{}}
      \displaystyle
      \int_I \!\int_\Omega
      \Big\{
      \modulo{u (t,x) -k} \, \partial_t\phi (t,x)
      +
      \sgn (u (t,x) -k) \,
      \left(f (t,x,u) - f (t,x,k)\right) \cdot \grad \phi (t,x)
      \\
      \displaystyle
      +
      \sgn (u (t,x) -k) \,
      \left( F (t,x,u) - \div f (t,x,k) \right)\, \phi (t,x)
      \Big\} \d{x} \d{t}
      \\
      \displaystyle
      +
      \int_\Omega
      \modulo{u_o (x) - k} \; \phi (0,x) \d{x}
      \\
      \displaystyle
      -
      \int_I \!\int_{\partial\Omega}  \sgn (u_b (t,\xi) - k)
      \left(
        f\left(t,\xi,\left(\tr u\right) (t,\xi)\right) - f (t,\xi,k)
      \right) \cdot \nu (\xi) \,
      \phi (t,\xi) \,
      \d{\xi} \d{t}
      & \geq & 0
    \end{array}
    \!\!\!\!\!\!\!\!\!
  \end{equation}
\end{definition}

\noindent Above, $\tr u (t,\xi)$ denotes the trace of the map $x \to u
(t,x)$ on $\partial\Omega$ evaluated at $\xi$. More information and
references on the trace operator are collected in the Appendix.

Now, we recall consequences of Definition~\ref{def:sol} specifying the
sense in which the initial datum is attained.

\begin{proposition}
  \label{prop:sol0}
  Let~{\bf($\boldsymbol{\Omega_{2,0}}$)}, {\bf(f)}, {\bf(F)}
  and~{\bf(C)} hold. Let $u \in (\BV \cap \L\infty) (I \times \Omega;
  \reali)$ be a solution to~\eqref{eq:1} in the sense of
  Definition~\ref{def:sol}. Then, there exists a set $\mathcal{E}
  \subset I$ of Lebesgue measure $0$ such that
  \begin{displaymath}
    \lim_{t\to 0+, \, t \in I \setminus \mathcal{E}}
    \int_\Omega \modulo{u (t,x) - u_o (x)} \d{x}
    =
    0 \,.
  \end{displaymath}
\end{proposition}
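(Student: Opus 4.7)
The plan is to test the entropy inequality~\eqref{eq:4} with functions of the product form $\phi(t,x) = \alpha_h(t)\, J(x)$, where $J \in \Cc{1}(\Omega;\reali^+)$ is arbitrary and $\alpha_h \in \Cc{1}([0,T);\reali^+)$ is a cutoff equal to $1$ on $[0,\tau]$, affine on $[\tau,\tau+h]$, and $0$ afterwards, for a fixed $\tau \in (0,T)$ and small $h>0$. Since $J$ vanishes on $\partial\Omega$, the boundary integral in~\eqref{eq:4} drops out; the initial term equals $\int_\Omega \modulo{u_o - k}\, J\,\d{x}$ because $\alpha_h(0)=1$, and the only difficulty will be passing to the limit $h\to 0^+$ in the term containing $\partial_t\phi = \alpha_h'(t)J(x)$.

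Pass to the limit: the map $t \mapsto g_{k,J}(t) := \int_\Omega \modulo{u(t,x)-k}\,J(x)\,\d{x}$ lies in $\L\infty(I)$ because $u\in\L\infty(I\times\Omega)$, so by Lebesgue differentiation the term $\int_I \alpha_h'(t)\, g_{k,J}(t)\,\d{t}$ converges to $-g_{k,J}(\tau)$ at every Lebesgue point of $g_{k,J}$. The remaining terms converge by dominated convergence to integrals over $[0,\tau]$ that are bounded by $C(k,J)\tau$, where $C(k,J)$ depends on $\|J\|_{\C1}$ and on the $\L\infty$ bounds for $f$, $\partial_uf$, $\div f$, $F$ on $I\times\bar\Omega\times[-M,M]$ with $M=\max(\|u\|_{\L\infty},|k|)$, which are finite by \textbf{(f)} and \textbf{(F)}. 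Consequently, for every such $(k,J)$ there exists a null set $\mathcal{E}_{k,J}\subset I$ outside of which
\begin{equation*}
  \int_\Omega \modulo{u(\tau,x)-k}\, J(x)\,\d{x}
  \;\leq\;
  \int_\Omega \modulo{u_o(x)-k}\, J(x)\,\d{x} + C(k,J)\,\tau.
\end{equation*}

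Choose countable dense subsets $\{k_m\}\subset\reali$ and $\{J_j\}\subset\Cc{1}(\Omega;\reali^+)$ (dense in the $\C1$-topology on every compact) and set $\mathcal{E} := \bigcup_{m,j} \mathcal{E}_{k_m,J_j}$, still of Lebesgue measure zero. A standard density argument, using $\|u\|_{\L\infty}+\|u_o\|_{\L\infty}<\infty$, extends the displayed inequality to all $k\in\reali$ and all $J\in\Cc{1}(\Omega;\reali^+)$ for every $\tau\in I\setminus\mathcal{E}$. To upgrade this to $\L1$ convergence, fix $\delta>0$ and approximate $u_o$ in $\L1(\Omega)$ by a simple function $\sigma=\sum_{i=1}^{N} c_i\,\chi_{A_i}$ with the $A_i$ pairwise disjoint and compactly contained in $\Omega$, satisfying $\|u_o-\sigma\|_{\L1}<\delta$; then approximate each $\chi_{A_i}$ in $\L1(\Omega)$ by $J_i\in\Cc{1}(\Omega;[0,1])$ with pairwise disjoint supports inside $\Omega$. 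Summing the inequality applied to the pairs $(c_i,J_i)$ and using the triangle inequality, together with the $\L\infty$ bound on $u$ and $u_o$ to absorb both the strip near $\partial\Omega$ where $\sum_i J_i\neq 1$ and the $\L1$ error between $\chi_{A_i}$ and $J_i$, one obtains
\begin{equation*}
  \int_\Omega \modulo{u(\tau,x)-u_o(x)}\,\d{x} \;\leq\; C_1\,\delta + C_2(\delta)\,\tau
  \qquad \text{for all }\tau \in I\setminus\mathcal{E}.
\end{equation*}
Letting first $\tau\to 0^+$ along $I\setminus\mathcal{E}$ and then $\delta\to 0$ yields the claim.

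The main obstacle is the last step: because any admissible $J$ must vanish on $\partial\Omega$, one cannot test directly with $J\equiv 1$, so the $\L1$-convergence must be reconstructed by a careful partition-of-unity-type approximation of $u_o$ inside $\Omega$, with constants $\sum_i C(c_i,J_i)$ kept finite (hence $C_2(\delta)$ possibly blowing up as $\delta\to 0$, but $\tau$ is sent to $0$ first). The $\L\infty$ bound on $u$ is essential to control the trace-side strip $\Omega\setminus\bigcup_i \spt J_i$.
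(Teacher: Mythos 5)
Your argument is correct in outline, but it follows a genuinely different route from the paper. The paper's proof never restricts the spatial test function to the interior: it takes $\modulo{k} > \norma{u}_{\L\infty}$ so that all the $\sgn$ and absolute-value terms in~\eqref{eq:4} linearize, shows that the $k$-dependent terms cancel via the Divergence Theorem, handles the boundary integral with Lemma~\ref{lem:tr3}, and then tests with $\theta_k(t)\,\psi(x)$ concentrating at $t=0$; Lemma~\ref{lem:uffa1} (the $\BV$-in-time trace at $0+$) identifies the limit as $-\int_\Omega u(0+,x)\psi(x)\d{x}$, and combining the two sign choices of $k$ gives $\int_\Omega \psi\,(u_o - u(0+))\d{x}=0$ for all $\psi$, from which the null set $\mathcal{E}$ is extracted through the averaged characterization $u(0+,x)=\lim_{t\to 0+}\frac1t\int_0^t u(\tau,x)\d\tau$. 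You instead keep $k$ finite and kill the boundary term by choosing $J$ compactly supported in $\Omega$, obtain the Kru\v zkov-type inequality $\int_\Omega\modulo{u(\tau)-k}J \le \int_\Omega\modulo{u_o-k}J + C(k,J)\tau$ at Lebesgue points, and reconstruct the $\L1$ statement by a countable-dense-family null set plus a simple-function approximation of $u_o$. What your approach buys is independence from the trace machinery and from the $\BV$ regularity of $u$ (only $\L\infty$ and the entropy inequality are used), at the price of the extra approximation bookkeeping; the paper's approach is shorter because it leans on the $\BV(I\times\Omega)$ hypothesis and the appendix lemmas. Two details you should make explicit: the test functions in Definition~\ref{def:sol} are $\Cc2$, so your piecewise-affine $\alpha_h$ and $\Cc1$ weights $J$ must be mollified (harmless); and the simple-function approximation must be chosen so that $\mathcal{L}^n\bigl(\Omega\setminus\bigcup_i\spt J_i\bigr)$ is small (e.g.\ by first exhausting $\Omega$ with a compact set of almost full measure), since $\norma{u_o-\sigma}_{\L1}<\delta$ alone does not force the uncovered region to have small measure, and the $\L\infty$ bound can only absorb a region of small measure.
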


\noindent The proof is deferred to Section~\ref{sec:PNDA}.

A further similar consequence of the above definition of solution and
of the properties of the trace operator is the following
Proposition. It gives information on the way in which the values of
the boundary data are attained by the solution.

\begin{proposition}
  \label{prop:min}
  Let~{\bf($\boldsymbol{\Omega_{2,0}}$)}, {\bf(f)}, {\bf(F)}
  and~{\bf(C)} hold. Let $u \in (\BV \cap \L\infty) (I \times \Omega;
  \reali)$ be a solution to~\eqref{eq:1} in the sense of
  Definition~\ref{def:sol}. Then, for all $k \in \reali$ and for
  almost every $(t,\xi) \in \pint{I} \times \partial\Omega$,
  \begin{equation}
    \label{eq:6}
    \!\!\!
    \left[
      \sgn\left(\tr u (t,\xi)  - k\right)
      -
      \sgn\left(u_{b} (t,\xi)  - k\right)
    \right]
    \left[f \left(t,\xi,\left(\tr u\right) (t,\xi)\right) - f (t,\xi,k)\right]
    \cdot \nu (\xi)
    \geq 0.
  \end{equation}
  Moreover, for almost every $(t,\xi) \in
  \pint{I}\times\partial\Omega$
  \begin{equation}
    \label{eq:alive}
    \min_{k \in \mathcal{I} (t,\xi)}
    \sgn\left( \tr u (t,\xi) - u_b (t,\xi) \right)
    \left[
      f \left( t, \xi, \tr u (t,\xi) \right)
      -
      f \left( t,\xi, k \right)
    \right]
    \cdot \nu (\xi) = 0,
  \end{equation}
  where $\mathcal{I} (t,\xi) = \left\{ k \in \reali \colon \left( u_b
      (t,\xi) - k \right) \left( k - \tr u (t,\xi) \right) \geq 0
  \right\} $.
\end{proposition}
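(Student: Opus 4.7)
The plan is to derive~\eqref{eq:6} by confronting the entropy inequality~\eqref{eq:4} with the Kru\v zkov-type inequality satisfied in the open set $\pint{I} \times \Omega$, via the Gauss--Green formula for BV functions, and then to deduce~\eqref{eq:alive} as an algebraic consequence of~\eqref{eq:6}.

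First, restricting the test functions in~\eqref{eq:4} to $\phi \in \Cc\infty(\pint{I} \times \Omega; \reali^+)$ makes the initial and boundary terms vanish, yielding the interior Kru\v zkov entropy inequality: the distribution
\begin{displaymath}
  \mu_k := -\partial_t \modulo{u - k} - \Div\!\left[\sgn(u - k)\bigl(f(t,x,u) - f(t,x,k)\bigr)\right] + \sgn(u - k)\bigl(F(t,x,u) - \div f(t,x,k)\bigr)
\end{displaymath}
is a non-negative Radon measure on $\pint{I} \times \Omega$. Since $u \in \BV(I \times \Omega; \reali)$, it admits traces on $\{0\} \times \Omega$ and on $I \times \partial\Omega$, so the Gauss--Green formula for BV functions (see the Appendix for the trace operator) allows to rewrite the interior integrals in~\eqref{eq:4} for general test functions $\phi \in \Cc2(\left]-\infty,T\right[ \times \reali^n; \reali^+)$, picking up boundary contributions at $t=0$ and on $I \times \partial\Omega$. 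Invoking Proposition~\ref{prop:sol0} to identify the trace of $u$ at $t=0$ with $u_o$, the $t=0$ contribution cancels and~\eqref{eq:4} recasts as
\begin{displaymath}
  \int_I\!\int_\Omega \phi \, \d{\mu_k} + \int_I\!\int_{\partial\Omega} \bigl[\sgn(\tr u - k) - \sgn(u_b - k)\bigr]\bigl(f(t,\xi,\tr u) - f(t,\xi,k)\bigr) \cdot \nu(\xi) \, \phi(t,\xi) \, \d{\xi} \, \d{t} \geq 0.
\end{displaymath}

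To extract the pointwise boundary inequality, localize $\phi$ near $\partial\Omega$ by choosing $\phi_\delta(t,x) = \psi(t,x) \, \theta_\delta\!\left(\mathrm{dist}(x, \partial\Omega)\right)$, with $\psi \in \Cc\infty(\left]-\infty, T\right[ \times \reali^n; \reali^+)$ and $\theta_\delta \in \Cc\infty(\reali^+; [0,1])$ satisfying $\theta_\delta(0) = 1$ and $\theta_\delta \to 0$ pointwise on $\left]0, +\infty\right[$ as $\delta \to 0$. Since $\mu_k$ is a finite Radon measure on $\pint{I} \times \Omega$ whose mass on $\left\{\mathrm{dist}(\cdot, \partial\Omega) \leq \delta\right\}$ vanishes as $\delta \to 0$, the first integral tends to $0$; the second equals the corresponding integral with $\psi$ in place of $\phi_\delta$, since $\theta_\delta \equiv 1$ on $\partial\Omega$. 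Arbitrariness of $\psi \geq 0$ then yields~\eqref{eq:6} for a.e.\ $(t,\xi) \in \pint{I} \times \partial\Omega$.

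For~\eqref{eq:alive}, observe that $k = \tr u(t,\xi) \in \mathcal{I}(t,\xi)$ and the bracket vanishes at this value of $k$, so the minimum is $\leq 0$. Conversely, a case analysis on the relative order of $\tr u(t,\xi)$ and $u_b(t,\xi)$ shows that for any $k \in \mathcal{I}(t,\xi)$ one has $\sgn(\tr u - u_b) \cdot \bigl[\sgn(\tr u - k) - \sgn(u_b - k)\bigr] \leq 0$; hence~\eqref{eq:6} forces $\sgn(\tr u - u_b) \bigl(f(t,\xi,\tr u) - f(t,\xi,k)\bigr) \cdot \nu(\xi) \geq 0$, so the minimum equals $0$. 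The main technical point is the rigorous Gauss--Green step on the parabolic cylinder $I \times \Omega$: the BV traces of $u$ on $\{0\} \times \Omega$ and on $I \times \partial\Omega$ must be handled in tandem, the former being identified with $u_o$ through Proposition~\ref{prop:sol0}.
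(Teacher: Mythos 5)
Your derivation of~\eqref{eq:6} is correct in substance but packaged differently from the paper. The paper plugs $\phi(t,x)=\Psi(t,x)\,\Phi_h(x)$, with $\Phi_h$ as in~\eqref{eq:11} and $\Psi\in\Cc2(]0,T[\times\reali^n;\reali^+)$, directly into~\eqref{eq:4} and sends $h\to 0$: the interior terms vanish by dominated convergence and the term with $\grad\Phi_h$ converges to the boundary trace integral via Lemma~\ref{lem:uffaMultiD} and Lemma~\ref{lem:tr3}. You instead recast~\eqref{eq:4} once and for all, via a Gauss--Green formula on the space--time cylinder, as ``nonnegative measure $\mu_k$ plus boundary term'', and then localize with $\theta_\delta(\mathrm{dist}(\cdot,\partial\Omega))$, killing the measure term by continuity from above of the finite measure $\mu_k$. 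This buys a cleaner limit (no computation of the $\grad\Phi_h$ term), but the Gauss--Green step silently uses that the normal trace of the composite field $\sgn(u-k)\left(f(\cdot,\cdot,u)-f(\cdot,\cdot,k)\right)$ on $I\times\partial\Omega$ coincides with $\sgn(\tr u-k)\left(f(\cdot,\cdot,\tr u)-f(\cdot,\cdot,k)\right)$, where $\tr u$ is the \emph{slicewise in $t$} trace appearing in Definition~\ref{def:sol}; this needs Lemma~\ref{lem:tr3} plus the compatibility of the space--time lateral trace with the slicewise trace, a point the paper avoids by working slice by slice through Lemma~\ref{lem:uffaMultiD}. Also, your appeal to Proposition~\ref{prop:sol0} to cancel the $t=0$ contribution is unnecessary: it suffices to take $\psi$ supported in $]0,T[\times\reali^n$, which is all one needs to test a.e.\ on $\pint{I}\times\partial\Omega$, exactly as the paper does.

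There is, however, a genuine sign error in your step towards~\eqref{eq:alive}. For $k\in\mathcal{I}(t,\xi)$ one has $\sgn\left(\tr u-k\right)-\sgn\left(u_b-k\right)\in\left\{\sgn\left(\tr u-u_b\right),\,2\,\sgn\left(\tr u-u_b\right)\right\}$, so $\sgn\left(\tr u-u_b\right)\left[\sgn\left(\tr u-k\right)-\sgn\left(u_b-k\right)\right]\geq 0$ (in fact $\geq 1$ whenever $\tr u\neq u_b$), \emph{not} $\leq 0$ as you claim. With your inequality as written, combining with~\eqref{eq:6} would yield $\sgn\left(\tr u-u_b\right)\left[f\left(t,\xi,\tr u\right)-f\left(t,\xi,k\right)\right]\cdot\nu\leq 0$, i.e.\ the reverse of what you need, so the inference does not close. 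The fix is immediate: since on $\mathcal{I}(t,\xi)$ the bracket in~\eqref{eq:6} is a strictly positive multiple of $\sgn\left(\tr u-u_b\right)$ when $\tr u\neq u_b$, \eqref{eq:6} gives $\sgn\left(\tr u-u_b\right)\left[f\left(t,\xi,\tr u\right)-f\left(t,\xi,k\right)\right]\cdot\nu\geq 0$ for all $k\in\mathcal{I}(t,\xi)$ (the case $\tr u=u_b$ being trivial), and together with the vanishing at the admissible value $k=\tr u$ this yields the minimum in~\eqref{eq:alive} equal to zero. One last small point, glossed also by the paper: to take the minimum over $k\in\mathcal{I}(t,\xi)$ one should make the exceptional null set in~\eqref{eq:6} independent of $k$, which follows by restricting to rational $k$ and using continuity of $f$ in $u$.
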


\noindent The proof is deferred to Section~\ref{sec:PNDA}. In other
words, \eqref{eq:alive} states that $\tr u$ and $u_b$ may differ
whenever the jump between them gives rise to waves exiting $\Omega$.

Recall now the classical concept of entropy -- entropy flux pair, in
the general case~\eqref{eq:1}.

\begin{definition}
  \label{def:pair}
  An entropy -- entropy flux pair for equation $\partial_t u + \Div
  f(t,x,u) = F (t,x,u)$ is a pair of functions
  $(\mathcal{E},\mathcal{F})$ such that:
  \begin{enumerate}
  \item $\mathcal{E} \in \C2 (\reali; \reali)$ and $\mathcal{F} \in
    \C2 (I \times \Omega \times \reali; \reali^n)$;
  \item $\mathcal{E}$ is convex;
  \item for all $(t,x,u) \in \Sigma$, $\mathcal{E}' (u) \, \partial_u
    f (t,x,u) = \partial_u \mathcal{F} (t,x,u)$.
  \end{enumerate}
\end{definition}

\noindent In the case of the general balance law~\eqref{eq:1}, the
differential form of the entropy inequality is
\begin{align*}
  \partial_t \mathcal{E} \left(u (t,x)\right) + \Div
  \mathcal{F}\left(t,x,u (t,x)\right) \leq \ & \mathcal{E}'\left(u
    (t,x)\right) \left( \mathcal{F}\left(t,x,u (t,x)\right) - \div
    f\left(t,x,u (t,x)\right) \right)
  \\
  & \qquad + \div \mathcal{F}\left(t,x,u (t,x)\right) .
\end{align*}
Particular cases of this expression are considered, for instance,
in~\cite{BressanLectureNotes, BulicekGwiazda, DafermosBook,
  GagneuxBook, KarlsenRisebro2003, Serre2}.

\begin{definition}
  \label{def:e_sol}
  Let $\Omega$ satisfy~{\bf($\boldsymbol{\Omega_{2,0}}$)}. Fix $u_o$
  and $u_b$ satisfying~{\bf{(C)}}.  An \emph{entropy solution}
  to~\eqref{eq:1} is a map $u \in (\L\infty \cap \BV) (I \times
  \Omega; \reali)$ such that for any entropy -- entropy flux pair
  $(\mathcal{E}, \mathcal{F})$ and for any $\phi \in \Cc2
  (\left]-\infty,T\right[ \times \reali^n; \reali^+)$, the following
  inequality holds:
  \begin{equation}
    \label{eq:dis_entro}
    \!\!\!\!\!\!\!\!\!
    \begin{array}{@{}l@{}}
      \displaystyle
      \int_I \int_\Omega
      \Big\{
      \mathcal{E}\left(u (t,x)\right) \partial_t \phi (t,x)
      +
      \mathcal{F}\left(t,x,u (t,x)\right) \cdot \grad \phi (t,x)
      \\
      \qquad
      +
      \left[
        \mathcal{E}'\left(u (t,x)\right)
        \left(
          F\left(t,x,u (t,x)\right)
          -
          \div f \left(t,x,u (t,x)\right)
        \right)
        +
        \div \mathcal{F}\left(t,x,u (t,x)\right)
      \right]
      \phi (t,x)
      \Big\}
      \d{x} \d{t}
      \\
      \displaystyle
      +
      \int_\Omega \mathcal{E}\left(u_o (x)\right) \; \phi (0,x)\d{x}
      \\
      \displaystyle
      -
      \int_I \int_{\partial\Omega}
      \Big[
      \mathcal{F}\left(t, \xi, u_b (t,\xi)\right)
      \\
      \qquad
      -
      \mathcal{E}'\left(u_b (t,\xi)\right)
      \left(
        f\left(t,\xi,u_b (t,\xi)\right)-
        f\left(t, \xi, \tr u (t,\xi)\right)
      \right)
      \Big]
      \!\!\cdot \nu (\xi) \phi (t,\xi)
      \d\xi \d{t}
      \geq 0 \, .
    \end{array}
    \!\!\!\!\!\!\!\!\!\!\!\!
  \end{equation}
\end{definition}

Formally, Definition~\ref{def:sol} is a \emph{``particular''} case of
Definition~\ref{def:e_sol}, obtained choosing as entropy -- entropy
flux pair the maps
\begin{displaymath}
  \mathcal{E} (u) = \modulo{u-k}
  \quad \mbox{ and } \quad
  \mathcal{F} (t,x,u) = \sgn (u-k) \left(f (t,x,u) - f (t,x,k)\right) ,
\end{displaymath}
for $k \in \reali$. However, the two definitions actually coincide.

\begin{proposition}
  \label{prop:equi}
  Definitions~\ref{def:sol} and~\ref{def:e_sol} are equivalent for
  bounded solutions.
\end{proposition}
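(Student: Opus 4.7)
The plan is to prove the two implications separately, exploiting the observation that the Kru\v zkov pair
\begin{displaymath}
  \mathcal{E}_k (u) = \modulo{u-k},
  \qquad
  \mathcal{F}_k (t,x,u) = \sgn (u-k) \left(f (t,x,u) - f (t,x,k)\right)
\end{displaymath}
formally identifies the two definitions. Before anything else I would verify, purely algebraically, that substituting $(\mathcal{E}_k, \mathcal{F}_k)$ into~\eqref{eq:dis_entro} reproduces~\eqref{eq:4}: in particular the boundary integrand collapses to
$\sgn\left(u_b (t,\xi)-k\right) \left(f(t,\xi,\tr u (t,\xi))-f (t,\xi,k)\right) \cdot \nu (\xi)$,
matching the last line of~\eqref{eq:4}, and the interior term $\mathcal{E}_k'(u)(F - \div f(\cdot,u)) + \div\mathcal{F}_k(\cdot,u)$ reduces to $\sgn(u-k)(F - \div f(\cdot,k))$. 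Since $(\mathcal{E}_k, \mathcal{F}_k)$ is not $\C{2}$, both implications then require an approximation step.

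For Definition~\ref{def:e_sol}~$\Rightarrow$~Definition~\ref{def:sol}, I would fix $k\in\reali$ and, for $\delta>0$, approximate the Kru\v zkov entropy by the smooth strictly convex
$\mathcal{E}_{k,\delta} (u) = \sqrt{(u-k)^2+\delta^2} - \delta$,
paired with
$\mathcal{F}_{k,\delta} (t,x,u) = \int_k^u \mathcal{E}_{k,\delta}' (s)\, \partial_u f (t,x,s) \d{s}$.
This is a legitimate entropy--entropy flux pair in the sense of Definition~\ref{def:pair}. Writing~\eqref{eq:dis_entro} for $(\mathcal{E}_{k,\delta},\mathcal{F}_{k,\delta})$ and passing to the limit $\delta\to 0$ by dominated convergence---using that $u, \tr u, u_b, u_o \in \L\infty$ and $\phi$ has compact support, so the integrands stay uniformly bounded---produces~\eqref{eq:4}.

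For Definition~\ref{def:sol}~$\Rightarrow$~Definition~\ref{def:e_sol}, I would pick $M \geq \max\{\|u\|_{\L\infty}, \|u_o\|_{\L\infty}, \|u_b\|_{\L\infty}\}$ and use the integration-by-parts identity
\begin{displaymath}
  \mathcal{E} (u)
  =
  \frac{1}{2}\int_{-M}^{M} \mathcal{E}'' (k)\, \modulo{u-k} \d{k}
  + A + B\, u,
  \qquad |u|\leq M,
\end{displaymath}
where $A,B$ depend only on $\mathcal{E} (\pm M)$ and $\mathcal{E}' (\pm M)$, together with the analogous decomposition of $\mathcal{F}$ forced by the compatibility $\partial_u \mathcal{F} = \mathcal{E}' \partial_u f$ from Definition~\ref{def:pair}. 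Multiplying~\eqref{eq:4} by the non-negative density $\mathcal{E}'' (k)/2$ and integrating over $k\in [-M,M]$, with Fubini justified by the $\L\infty$ bounds, reproduces the Kru\v zkov portion of~\eqref{eq:dis_entro}. The leftover affine contribution $A + B u$ is recovered directly from~\eqref{eq:4}: choosing $k=\pm M'$ with $M'>M$ renders all sign functions constant, and the two resulting inequalities combine into the weak formulation of~\eqref{eq:1} tested against $\phi$, which is exactly the contribution missing from the Kru\v zkov part.

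The main obstacle I anticipate is the bookkeeping in the second implication: the constants $A$ and $B$ must be tracked carefully so that the Kru\v zkov and affine pieces add up precisely to~\eqref{eq:dis_entro}, and the boundary integrand must be reassembled along the way. Both reductions lean on the preliminary algebraic identification of the first paragraph, which is where the equivalence is actually encoded; the remaining work is a standard regularization argument on one side and a dual representation of $(\mathcal{E}, \mathcal{F})$ as a superposition of Kru\v zkov pairs on the other.
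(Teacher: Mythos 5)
Your first implication (entropy solution $\Rightarrow$ Kru\v zkov solution) is the same argument as the paper's: smooth the Kru\v zkov entropy by $\sqrt{(u-k)^2+\delta^2}$ (the paper uses $\sqrt{1/m+(u-k)^2}$), define the flux through condition~3.\ of Definition~\ref{def:pair}, and pass to the limit by dominated convergence. Your second implication, however, takes a genuinely different route. The paper never differentiates $\mathcal{E}$: it observes that~\eqref{eq:dis_entro} is stable under taking nonnegative multiples $\alpha\modulo{u-k}+\beta$, finite sums, and uniform limits of entropies on $[-M,M]$, and then approximates an arbitrary convex entropy uniformly by piecewise linear continuous functions. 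You instead use the exact representation $\mathcal{E}(u)=\tfrac12\int_{-M}^{M}\mathcal{E}''(k)\modulo{u-k}\d{k}+A+Bu$, multiply~\eqref{eq:4} by the nonnegative density $\mathcal{E}''(k)/2$, integrate in $k$ (Fubini is fine since everything is bounded), and recover the affine remainder from~\eqref{eq:4} with $\modulo{k}>M$, where the sign functions freeze and the two inequalities combine into the weak formulation as an equality --- essentially the same cancellation used in the paper's proof of Proposition~\ref{prop:sol0}. Your route exploits the $\C2$ regularity of $\mathcal{E}$ guaranteed by Definition~\ref{def:pair} and yields an exact superposition identity with no limiting step in this direction; the paper's route is slightly more robust in that it would apply to merely convex continuous entropies. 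One point you should make explicit in the bookkeeping you anticipate: the superposed Kru\v zkov fluxes plus $Bf$ reproduce the given $\mathcal{F}$ only up to an additive vector field $c(t,x)$ (since condition~3.\ determines $\mathcal{F}$ only up to such a term), but this ambiguity is harmless because replacing $\mathcal{F}$ by $\mathcal{F}+c$ leaves~\eqref{eq:dis_entro} unchanged: by the divergence theorem the extra interior terms $c\cdot\grad\phi+\div c\,\phi$ exactly cancel the extra boundary term $-c\cdot\nu\,\phi$. With that observation, and noting that the boundary integrand of~\eqref{eq:dis_entro} evaluated on the Kru\v zkov pair collapses to $\sgn(u_b-k)\left(f(t,\xi,\tr u)-f(t,\xi,k)\right)\cdot\nu$ as you checked, your argument is complete and correct.
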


\noindent This Proposition is well known and its proof is briefly
sketched in Section~\ref{sec:PNDA}.

We are now ready to state the main result of this paper.

\begin{theorem}
  \label{thm:Dream}
  Let $T>0$, $\alpha \in \left]0, 1\right[$, and assume
  that~{\bf($\boldsymbol{\Omega_{3,\alpha}}$)}, {\bf(f)} and~{\bf(F)}
  hold. Fix an initial datum $u_o \in (\L\infty \cap \BV) (\Omega;
  \reali)$ and a boundary datum $u_b \in \C{3,\alpha} (I
  \times \partial\Omega; \reali)$ with $u_b (0,\xi) = 0$ for all $\xi
  \in \partial\Omega$. Then, problem~\eqref{eq:1} admits a unique
  solution $u \in \C{0,1}\left(I; \L1
    (\Omega;\reali)\right)$. Moreover, the following estimates hold:
  \begin{eqnarray}
    \nonumber
    \norma{u (t)}_{\L\infty (\Omega;\reali)}
    & \leq &
    \left(
      \norma{u_o}_{\L\infty (\Omega;\reali)}
      +
      \norma{u_b}_{\L\infty ([0,t]\times\Omega;\reali)}
    \right)
    e^{c_1 t}
    \\
    \label{eq:L_bound_final}
    & &
    \qquad
    +
    \frac{c_2 + \norma{\partial_t u_b}_{\L\infty ([0,t]\times\Omega;\reali)}}{c_1}
    \left(e^{c_1 t}-1\right)
    \\
    \nonumber
    \tv\left(u (t)\right)
    & \leq &
    \mathcal{C} (\Omega,f,F,t)
    \left(
      \norma{u_b}_{\C{3,\alpha} ([0,t]\times\partial\Omega;\reali)}
      +
      \norma{u_b}_{\C{3,\alpha}([0,t]\times\partial\Omega;\reali)}^2
    \right)
    \\
    \label{eq:tv_bound_final}
    & &
    \times
    \left(
      1
      +
      t
      +
      \norma{u_o}_{\L\infty (\Omega;\reali)}
      +
      \tv (u_o)
    \right)
    \\
    \nonumber
    & &
    \times
    \exp
    \left(
      \mathcal{C} (\Omega,f,F,t)
      (1+\norma{u_b}_{\C{2,\alpha} ([0,t]\times\partial\Omega;\reali)})\,t
    \right)
    \\
    \label{eq:t_Lip_final}
    \norma{u (t) - u (s)}_{\L1 (\Omega;\reali)}
    & \leq &
    \left(\sup_{\tau \in [s,t]} \tv\left(u (\tau)\right)\right)
    \modulo{t-s}
  \end{eqnarray}
  for $t,\,s \in I$, where $c_1$, $c_2$ and $\mathcal{C}
  (\Omega,f,F,t)$ are independent of the initial and boundary data,
  see~\eqref{eq:c1c2} and~\eqref{eq:finita}.
\end{theorem}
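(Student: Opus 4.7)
The plan is to implement the vanishing viscosity scheme described in the introduction, first for homogeneous boundary data and then to reduce the non-homogeneous case to the homogeneous one by a suitable lift. Concretely, I proceed in the following stages.

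First, assume $u_b=0$ and smooth, compactly supported initial data $u_o$ compatible with the boundary. For each $\epsilon>0$, invoke the classical parabolic existence theory from \cite{LadysenkajaSolonnikovUralceva, FriedmanParabolic} to obtain a smooth solution $u_\epsilon$ to~\eqref{eq:visco}. The key estimates are: (i) an $\L\infty$ bound by a maximum-principle/Duhamel argument applied to a suitable supersolution of the form $(M + K\tau)e^{c_1 \tau}$, yielding~\eqref{eq:L_bound_final} with $c_1, c_2$ depending only on $\norma{\partial_u F}_{\L\infty}$ and $\norma{\partial_u \div f}_{\L\infty}$; and (ii) a BV bound, obtained by differentiating the equation in $x$, multiplying by $\sgn(\grad u_\epsilon)$ (after a regularization), integrating, and carefully controlling the boundary contributions — this is the step the authors call \emph{tricky} and I expect this to be the single most delicate part, because of the combination of the interior source term, the $x$-dependence of $f$, and the boundary flux term. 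Since $\L\infty$ plus BV bounds on $u_\epsilon$ give $\L1$ compactness in space uniformly in time, and since the equation itself gives Lipschitz-in-$t$ continuity into a negative Sobolev space, a Helly/Aubin–Lions-type argument produces a limit $u$. Passing to the limit in the viscous entropy inequality, using Kru\v zkov entropies $\modulo{\cdot -k}$ together with the trace properties recalled in the appendix, yields a solution in the sense of Definition~\ref{def:sol}.

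Second, to handle $u_b\neq 0$ with $u_b\in\C{3,\alpha}$ and $u_b(0,\cdot)=0$, extend $u_b$ inside $\Omega$ by solving, at each time $t$, an elliptic problem of Dirichlet type on $\Omega$ whose boundary datum is $u_b(t,\cdot)$. The regularity Lemma~\ref{lem:elliptic} provides an extension $U\in\C{3,\alpha}(I\times\bar\Omega)$ with bounds controlled by $\norma{u_b}_{\C{3,\alpha}}$, and with $U(0,\cdot)\equiv 0$. Setting $w=u-U$ transforms~\eqref{eq:1} into the balance law~\eqref{eq:23} for $w$, with modified flux $\tilde f$ and source $\tilde F$ obtained by translating $u\mapsto u+U$ in $f,F$ and absorbing $\partial_t U$ and $\Div f(t,x,U)$ into $\tilde F$; crucially, $w$ has homogeneous boundary datum and initial datum $u_o$. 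Apply Step~1 to this translated problem to get a solution $w$, then define $u=w+U$; an \emph{ad hoc} doubling-of-variables verification, as announced in the introduction, shows that this $u$ satisfies Definition~\ref{def:sol} for the original data. Collecting the $\L\infty$ and BV estimates from Step~1 applied to $\tilde f,\tilde F$ and tracking the dependencies on $\norma{U}_{\C{3,\alpha}}$ gives~\eqref{eq:L_bound_final} and~\eqref{eq:tv_bound_final}, in particular producing the quadratic factor in $\norma{u_b}_{\C{3,\alpha}}$ through terms of the form $\partial_u f\cdot \grad U$ and the exponential factor through $\norma{\tilde F}_{\L\infty}$-type Gronwall contributions.

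Third, uniqueness and Lipschitz continuous dependence, both in initial and boundary data, follow from Kru\v zkov's doubling-of-variables method adapted to~\eqref{eq:1}. Taking two entropy solutions $u,v$ associated with data $(u_o,u_b)$ and $(v_o,v_b)$, the standard doubling argument together with Propositions~\ref{prop:sol0} and~\ref{prop:min} (which control both initial and boundary traces) gives the $\L1$-contraction-type inequality
\begin{equation*}
\norma{u(t)-v(t)}_{\L1(\Omega)} \leq \norma{u_o-v_o}_{\L1(\Omega)} + C\int_0^t \int_{\partial\Omega} \modulo{u_b-v_b}\,\d{\xi}\d{s},
\end{equation*}
which yields uniqueness and, by density, extends the existence part to $u_o\in(\BV\cap\L\infty)(\Omega)$ beyond the smooth class used in Step~1. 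Finally, \eqref{eq:t_Lip_final} is obtained by testing the entropy inequality with $\phi$ localized in time and then standard $\BV$-in-space / Lipschitz-in-time arguments: for a BV-in-space function solving a balance law, $\norma{u(t)-u(s)}_{\L1}$ is controlled by $\modulo{t-s}$ times the spatial total variation plus $\L\infty$ source contributions, and taking the supremum of $\tv(u(\tau))$ on $[s,t]$ gives the stated bound. The main obstacle remains the uniform BV estimate in Step~1 (and its transfer through the translation to the $u_b\neq 0$ case), since it is the estimate on which all other bounds ultimately rest.
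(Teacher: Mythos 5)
Your proposal is correct in outline and follows essentially the same route as the paper: vanishing viscosity with uniform $\L\infty$ and $\BV$ bounds for the homogeneous problem (Lemmas~\ref{lem:EeUpar}--\ref{lem:tvParZero}, Proposition~\ref{prop:limiteZero}), the elliptic lift of $u_b$ with the translated problem~\eqref{eq:23} and an ad hoc doubling-of-variables verification (Lemma~\ref{lem:elliptic}, Theorem~\ref{thm:estConComp}), Kru\v zkov doubling for the $\L1$ stability and uniqueness (Theorem~\ref{thm:wp}), and a final density/bootstrap step relaxing the initial datum to $(\BV \cap \L\infty)(\Omega;\reali)$. Only minor details differ from the paper (e.g.\ the precise form of $c_2$, which also involves $\div f(\cdot,\cdot,0)$ and $F(\cdot,\cdot,0)$, and the fact that the approximating initial data must be cut off so as to vanish on $\partial\Omega$, which is what produces the extra $\norma{u_o}_{\L\infty}$ term in~\eqref{eq:tv_bound_final}).
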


The proof consists of the lemmas and propositions in the sections
below, together with the final bootstrap procedure presented in
Section~\ref{sec:PNDA}. The Lipschitz continuous dependence of the
solution from the initial and boundary data is stated and proved in
Theorem~\ref{thm:wp}.

\begin{remark}
  \label{rem:BV}
  {\rm The above estimate~\eqref{eq:L_bound_final} shows that the
    solution $u$ is in $\L\infty (I\times \Omega;
    \reali)$. By~\eqref{eq:tv_bound_final}, we also have $u (t) \in
    \BV (\Omega; \reali)$ for every $t \in I$. The Lipschitz
    continuity in time ensured by~\eqref{eq:t_Lip_final} then implies
    that $u \in (\L\infty \cap \BV) (I\times \Omega; \reali)$, as
    required in Definition~\ref{def:sol}. This can be proved using
    exactly the arguments in~\cite[Section~2.5, Proof of
    Theorem~2.6]{BressanLectureNotes}.}
\end{remark}

\section{The Parabolic Problem~(\ref{eq:visco})}
\label{sec:PP}

All proofs of the statements in this Section are deferred to
Section~\ref{sec:PPP}. Note that the results in this section are
obtained without requiring that $u_b=0$.

The next Lemma provides the existence of classical solutions to the
parabolic problem~\eqref{eq:visco}.

\begin{lemma}
  \label{lem:EeUpar}
  Fix $\alpha \in \left]0,1\right[$. Let
  conditions~{\bf($\boldsymbol{\Omega_{2,\alpha}}$)}, {\bf(f)}
  and~{\bf(F)} hold. Assume moreover that there exists a function
  $\bar u \in \C{2,\delta} (I \times \bar\Omega; \reali)$, with
  $\delta \in \left]\alpha, 1\right[$, such that
  \begin{equation}
    \label{eq:smoothData}
    \begin{array}{@{}l}
      \partial_t \bar u (0,\xi)
      +
      \Div f \left(0, \xi, \bar u (0,\xi)\right)
      =
      F\left(0, \xi, \bar u (0,\xi)\right)
      +
      \epsilon \; \Delta \bar u (0,\xi)
      \\[3pt]
      u_o (\xi) =  \bar u (0,\xi) = u_b (0, \xi)
    \end{array}
    \qquad
    \mbox{ for all } \xi \in \partial\Omega.
  \end{equation}
  Then, setting
  \begin{equation}
    \label{eq:compInDat}
    u_o (x) = \bar u (0,x)
    \quad \mbox{ for all } x \in \bar\Omega
    \quad \mbox{ and } \quad
    u_b (t,\xi) = \bar u (t,\xi)
    \quad \mbox{ for all } (t,\xi) \in I \times \partial\Omega \,,
  \end{equation}
  there exists a unique solution $u_\epsilon$ to~\eqref{eq:visco} of
  class $\C{2,\gamma} (I \times \bar{\Omega}; \reali)$, for a suitable
  $\gamma \in \left]\delta, 1\right[$.
\end{lemma}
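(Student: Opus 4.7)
The plan is to reduce~\eqref{eq:visco} to a problem with homogeneous initial and boundary data and then invoke classical parabolic theory via a fixed-point argument. Set $v = u_\epsilon - \bar u$ on $I \times \bar\Omega$. By~\eqref{eq:compInDat} and the second line of~\eqref{eq:smoothData}, $v$ vanishes on the parabolic boundary $(\{0\} \times \bar\Omega) \cup (I \times \partial\Omega)$, so in particular $\grad v$ and $\Delta v$ vanish on $\{0\}\times\Omega$. Substituting $u_\epsilon = \bar u + v$ into~\eqref{eq:visco}, the function $v$ satisfies
\begin{equation*}
  \partial_t v - \epsilon \, \Delta v + \partial_u f (t,x,\bar u+v) \cdot \grad v = G(t,x,v),
  \qquad v|_{t=0} = 0, \quad v|_{I\times\partial\Omega} = 0,
\end{equation*}
where $G(t,x,v) = F(t,x,\bar u + v) - \div f(t,x,\bar u + v) - \partial_u f(t,x,\bar u+v)\cdot\grad \bar u - \partial_t \bar u + \epsilon\,\Delta \bar u$. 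A direct computation using the identity $\Div f = \div f + \partial_u f \cdot \grad u$ shows that the first line of~\eqref{eq:smoothData} reads precisely $G(0,\xi,0) = 0$ for every $\xi \in \partial\Omega$, i.e.~the first-order compatibility condition at the parabolic corner $\{0\}\times\partial\Omega$.

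First I would establish an a priori $\L\infty$ bound on $v$. Since $\partial_u f$, $\partial_u \div f$ and $\partial_u F$ lie in $\L\infty(\Sigma)$ by~\textbf{(f)} and~\textbf{(F)}, and $\bar u \in \C{2,\delta}$ is fixed, the coefficient of $\grad v$ and the map $v \mapsto G(t,x,v)$ are uniformly Lipschitz in $v$ with bounds depending only on $\norma{\bar u}_{\C2}$ and the structural constants. A classical parabolic maximum-principle argument, comparing $v$ with the barriers $\pm M e^{\lambda t}$ for $M$ and $\lambda$ sufficiently large, then yields $\norma{v}_{\L\infty(I\times\Omega;\reali)} \le C$. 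Truncating $\partial_u f$ and $G$ outside $\modulo{v} \le C+1$, I may assume without loss of generality that they and their needed derivatives are globally bounded on $I\times\bar\Omega\times\reali$.

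Next, I would set up a Schauder fixed-point argument in $X = \C{\delta,\delta/2}(I\times\bar\Omega;\reali)$. For $w$ in a suitable ball of $X$, let $Tw$ denote the solution of the linear parabolic problem obtained by replacing $v$ with $w$ inside $\partial_u f$ and $G$. Its coefficients lie in $\C{\delta,\delta/2}$ and its data are $\C{2,\delta}$ compatible to first order at the corner, so classical linear parabolic Schauder theory, see~\cite[Ch.~IV]{LadysenkajaSolonnikovUralceva} or~\cite[Ch.~7]{FriedmanParabolic}, yields $Tw \in \C{2+\gamma,1+\gamma/2}(I\times\bar\Omega;\reali)$ for some $\gamma \in \left]\delta,1\right[$, with a uniform estimate. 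The a priori bound above ensures $T$ maps a large enough ball of $X$ into itself, and $T$ is compact there by Ascoli--Arzelà; Schauder's theorem then produces a fixed point $v \in \C{2+\gamma,1+\gamma/2}$, and $u_\epsilon = \bar u + v$ is the desired classical solution. Uniqueness follows from a standard $\L2$ energy estimate applied to the difference of two solutions, whose linearized coefficients are bounded via~\textbf{(f)}, \textbf{(F)} and the mean value theorem. The main technical obstacle is bookkeeping: the Hölder exponents and the order of the compatibility conditions must be threaded carefully, which is exactly why the statement inserts a gap $\alpha < \delta < \gamma$ rather than asking for these exponents to coincide.
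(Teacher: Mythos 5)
Your plan takes a genuinely different route from the paper. The paper does not re-derive any parabolic existence theory: it applies the quasilinear existence result of \cite[Chapter~7, \S~4, Theorem~9]{FriedmanParabolic} directly to $L u = \epsilon\,\Delta u - \partial_t u$ and $\f(t,x,u,w) = \div f (t,x,u) + \partial_u f (t,x,u)\cdot w - F (t,x,u)$, with $\bar u$ playing the role of the initial--boundary function, and verifies the parabolicity, coefficient, growth and compatibility hypotheses from \textbf{(f)}, \textbf{(F)} and~\eqref{eq:smoothData}; uniqueness is quoted from \cite[Chapter~7, \S~4, Theorem~6]{FriedmanParabolic}. You instead re-prove such a theorem: translation by $\bar u$ to homogeneous data (your identification of the first line of~\eqref{eq:smoothData} with the corner condition $G(0,\xi,0)=0$ is correct), an $\L\infty$ barrier bound, truncation, a Schauder fixed point for the linearized problem, and uniqueness by an energy estimate. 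This is more self-contained, at the price of redoing what the paper outsources.

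Within your route there are two real soft spots. First, the sentence ``the a priori bound above ensures $T$ maps a large enough ball of $X$ into itself'' is not justified: the linear Schauder estimate bounds $\norma{Tw}_{\C{2,\delta}}$ by a constant that depends on the $\C{\delta,\delta/2}$ norms of the coefficients and of $G(\cdot,\cdot,w)$, hence on the radius $R$ of the ball, and this constant grows with $R$; moreover the $\L\infty$ bound you proved concerns solutions of the nonlinear problem, not images $Tw$ of arbitrary $w$. To close the self-map you need an estimate uniform over $\norma{w}_{\L\infty}\leq M$ -- for instance, since the principal part is $\epsilon\Delta$ and, after truncation, drift and source are bounded, parabolic $\mathbf{L}^p$ theory or the H\"older estimates of \cite[Chapter~V]{LadysenkajaSolonnikovUralceva} give a bound on a $\C{1+\beta,(1+\beta)/2}$ norm of $Tw$ depending only on $\L\infty$ data, and one then takes a ball of that radius (or replaces Schauder's fixed point by Leray--Schauder with an a priori H\"older bound on solutions). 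Relatedly, the first-order compatibility for the linearized problem requires $w(0,\cdot)=0$ on $\partial\Omega$, so the convex set must be taken inside $\{w(0,\cdot)=0\}$ (which $T$ preserves), not an arbitrary ball. Second, with coefficients only in $\C{\delta,\delta/2}$, Schauder theory returns the exponent $\delta$, not some $\gamma>\delta$; your construction therefore yields $\C{2,\delta}$-type regularity, and the claimed $\gamma\in\left]\delta,1\right[$ does not come out of it -- though the paper's own proof, which quotes Friedman for ``$0<\gamma<1$'', is equally loose on this point, so this is a shared imprecision with the statement rather than a defect peculiar to your approach.
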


We now provide an $\L\infty$--estimate for the solution $u_\epsilon$
to~\eqref{eq:visco}. It is important to note that we obtain a bound
that holds uniformly in $\epsilon$, see~\eqref{eq:Linfty}.

\begin{lemma}
  \label{lem:stimaLinf}
  Fix $\alpha \in \left]0,1\right[$. Let
  conditions~{\bf($\boldsymbol{\Omega_{2,\alpha}}$)}, {\bf(f)}
  and~{\bf(F)} hold. Assume moreover there exists a function $\bar u
  \in \C{2,\delta} (I \times \bar\Omega; \reali)$, for $\delta \in
  \left]\alpha, 1\right[$, such that~\eqref{eq:smoothData} holds.  Let
  $u_\epsilon$ be a solution to~\eqref{eq:visco} with $u_o$ and $u_b$
  as in~\eqref{eq:compInDat}. Then, for all $t \in I$,
  \begin{equation}
    \label{eq:Linfty}
    \norma{u_\epsilon}_{\L\infty ([0,t] \times \Omega; \reali)}
    \leq
    \left(
      \norma{u_o}_{\L\infty (\Omega; \reali)}
      +
      \norma{u_b}_{\L\infty ([0,t] \times \partial\Omega;\reali)}
    \right)
    e^{c_1\, t}
    +
    \frac{c_2}{c_1}
    \left(e^{c_1\, t} - 1\right) \,,
  \end{equation}
  where $c_1, \, c_2$ are constants depending on the $\L\infty$ norms
  of $\div f$, $\partial_u \div f$, $F$ and $\partial_u F$, as defined
  in~\eqref{eq:c1c2}.
\end{lemma}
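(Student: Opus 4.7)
The plan is to establish~\eqref{eq:Linfty} by a parabolic comparison argument: I would construct a spatially constant supersolution $V(s)$ by solving an auxiliary linear ODE in time, invoke the classical parabolic maximum principle to conclude $u_\epsilon \leq V$, and obtain the matching lower bound from the subsolution $-V$. Concretely, fix $t \in I$, set $A = \norma{u_o}_{\L\infty (\Omega; \reali)} + \norma{u_b}_{\L\infty ([0,t] \times \partial\Omega; \reali)}$, and define $V\colon [0,t] \to \reali$ by $V(s) = A\, e^{c_1 s} + (c_2/c_1)(e^{c_1 s} - 1)$, which solves $V'(s) = c_1 V(s) + c_2$ with $V(0) = A \geq 0$; in particular $V$ is nonnegative and nondecreasing.

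I would first check that $V$, viewed as a function constant in $x$, is a supersolution of~\eqref{eq:visco}. Since $\grad V \equiv 0$ and $\Delta V \equiv 0$, the parabolic residual reduces to $V'(s) + \div f(s,x,V(s)) - F(s,x,V(s))$. Using $\partial_u \div f \in \L\infty (\Sigma)$ and $\partial_u F \in \L\infty (\Sigma)$, together with the boundedness of $\div f(\cdot,\cdot,0)$ and $F(\cdot,\cdot,0)$ on the compact set $I \times \bar\Omega$, a one variable Taylor expansion in $u$ about $u = 0$ gives $F(s,x,V) - \div f(s,x,V) \leq c_2 + c_1 V(s)$ for suitable constants $c_1, c_2$ matching~\eqref{eq:c1c2}; combined with the ODE this is precisely the supersolution inequality. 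The initial and boundary conditions $V(0) \geq u_o$ on $\Omega$ and $V \geq u_b$ on $[0,t] \times \partial\Omega$ are immediate from the choice of $A$ and the monotonicity of $V$.

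For the comparison step I would set $w = u_\epsilon - V$ and subtract the two inequalities. A standard linearisation between $V$ and $u_\epsilon$ produces
\[
  \partial_s w - \epsilon \, \Delta w + b(s,x) \cdot \grad w + a(s,x)\, w \leq 0,
\]
where $b(s,x) = \partial_u f(s,x,u_\epsilon) \in \L\infty$ by~\textbf{(f)} and $a(s,x) = \int_0^1 \bigl(\partial_u \div f - \partial_u F\bigr)(s,x, V + \tau w)\, \d{\tau} \in \L\infty$ by~\textbf{(f)} and~\textbf{(F)}. Since $w(0,\cdot) \leq 0$ on $\Omega$ and $w \leq 0$ on $[0,t] \times \partial\Omega$, the classical parabolic maximum principle for linear operators with bounded coefficients then yields $w \leq 0$, hence $u_\epsilon \leq V$ on $[0,t] \times \bar\Omega$. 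An analogous argument --- using that the Taylor bound above involved $|V|$ and therefore applies equally at $u = -V(s)$ --- shows that $-V$ is a subsolution of~\eqref{eq:visco}, whence $u_\epsilon \geq -V$; combining the two estimates and evaluating at $s = t$ produces~\eqref{eq:Linfty}.

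The main technical subtlety I foresee is the reduction in the comparison step to a genuinely \emph{linear} parabolic inequality for $w$ with bounded coefficients: it is here that the $\L\infty$ assumptions on $\partial_u f$, $\partial_u \div f$ and $\partial_u F$ are essential. This is also the step where the additive constant $c_2$, arising from the non-vanishing of $\div f$ and $F$ at $u = 0$, must be retained --- the term whose absence in the analogous estimate~\cite[Formula~(9)]{BardosLerouxNedelec} is flagged in the Introduction.
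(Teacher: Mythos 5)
Your argument is correct, but it follows a genuinely different route from the paper. The paper proves~\eqref{eq:Linfty} by quoting the a priori maximum estimate of Ladyzhenskaya--Solonnikov--Uraltseva (Chapter~1, \S~2, Theorem~2.9): it rewrites~\eqref{eq:visco} in their form, verifies the structure conditions~(2.29) and~(2.32) with $b_2=0$ and $\Phi(\modulo{u}) = c_1\modulo{u}+c_2$ as in~\eqref{eq:c1c2}, and reads off the bound from their Formula~(2.34) with $\phi(\xi)=(c_2/c_1)(\xi^{c_1}-1)$. You instead build the barrier explicitly: the spatially constant $V(s)=A e^{c_1 s}+(c_2/c_1)(e^{c_1 s}-1)$ solving $V'=c_1V+c_2$, the mean-value bound $F(s,x,\pm V)-\div f(s,x,\pm V)\lessgtr \pm(c_2+c_1 V)$ coming from $\partial_u\div f,\partial_u F\in\L\infty$ and the values at $u=0$, and then the linear comparison inequality for $w=u_\epsilon-V$ with bounded coefficients $b=\partial_u f(s,x,u_\epsilon)$ and $a=\int_0^1(\partial_u\div f-\partial_u F)(s,x,V+\tau w)\d\tau$. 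This is essentially a self-contained re-derivation of the comparison mechanism that the cited theorem packages, and it has the merit of making visible exactly where the additive constant $c_2$ (absent in~\cite[Formula~(9)]{BardosLerouxNedelec}) originates; the paper's version is shorter and delegates the barrier construction to the reference. Two small points you should make explicit if you write this up: the maximum principle you invoke allows a zero-order coefficient of arbitrary sign only after the standard exponential rescaling $w=e^{Ks}\tilde w$ (harmless here, since $u_\epsilon\in\C{2,\gamma}(I\times\bar\Omega;\reali)$ by Lemma~\ref{lem:EeUpar}, so all coefficients are continuous and bounded on the closed cylinder), and the paper's choice $c_1 = 1+\norma{\partial_u\div f}_{\L\infty}+\norma{\partial_u F}_{\L\infty}\geq 1$ is what guarantees both the division by $c_1$ and your inequality $c_2+(c_1-1)V\leq c_2+c_1V$ for the nonnegative barrier.
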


We remark that, also in the case $u_b = 0$, due to the presence of the
second addend in the right hand side, the above
estimate~\eqref{eq:Linfty} significantly differs from the $\L\infty$
bound~\cite[Formula~(9)]{BardosLerouxNedelec}, which can not be
true. Indeed, the estimate~\cite[Formula~(9)]{BardosLerouxNedelec}
implies that the solution to~\eqref{eq:visco} with $u_o= 0$ and $u_b =
0$ is $u = 0$, which is false as, for instance, the case where $f
(t,x,u) = - x$ and $F=0$ clearly shows.

Consider now problem~\eqref{eq:visco} with homogeneous boundary
condition, i.e., $u_b (t,\xi) = 0$ for $(t,\xi) \in I
\times \partial\Omega$. In the next Lemma we partly
follow~\cite[Theorem~1]{BardosLerouxNedelec}, \cite[Chapter~6,
\S~6.9]{DafermosBook} and~\cite[Chapter~4]{GagneuxBook}. Introduce the
notation
\begin{equation}
  \label{eq:Ut}
  \begin{array}{rcl}
    \mathcal{U} (t)
    & = &
    \displaystyle
    [-\mathcal{M} (t), \mathcal{M} (t)]
    \quad \mbox{ with }
    \\
    \mathcal{M} (t)
    & = &
    \displaystyle
    \left(
      \norma{u_o}_{\L\infty (\Omega; \reali)}
      +
      \norma{u_b}_{\L\infty ([0,t] \times \partial\Omega;\reali)}
    \right)
    e^{c_1\, t}
    +
    \frac{c_2}{c_1}
    \left(e^{c_1\, t} - 1\right),
  \end{array}
\end{equation}
as in~\eqref{eq:Linfty} and~\eqref{eq:c1c2}.

\begin{lemma}
  \label{lem:tvParZero}
  Fix $\delta \in \left]0, 1\right[$.  Let
  conditions~{\bf($\boldsymbol{\Omega_{2,\delta}}$)}, {\bf(f)}
  and~{\bf(F)} hold. Assume moreover that $u_o \in \C{2,\delta}
  (\bar{\Omega} ; \reali)$ is such that $u_o (\xi) =0$ for all $\xi
  \in \partial\Omega$ and $u_{b} (t,\xi) = 0$ for $(t,\xi) \in I
  \times \Omega$. Let $u_\epsilon \in \C2 (I \times \bar \Omega;
  \reali)$ be a solution to~\eqref{eq:visco}. Then,
  \begin{align}
    \label{eq:tvPar}
    \tv \left(u_\epsilon (t)\right) \leq \ & \mathcal{L}_\epsilon (t)
    \\
    \label{eq:dipTempoPar}
    \norma{u_\epsilon (t) - u_\epsilon (s)}_{\L1 (\Omega; \reali)}
    \leq \ & \mathcal{L}_\epsilon \left(\max\{t,s\}\right)\modulo{t-s}
  \end{align}
  for $t,s \in I$, where
  \begin{equation}
    \label{eq:30}
    \mathcal{L}_\epsilon (t)
    =
    \left(
      A_1
      +
      A_2 \, t
      +
      A_3 \norma{\grad u_o}_{\L1 (\Omega; \reali^n)}
      +
      \epsilon
      \norma{\Delta u_o}_{\L1 (\Omega; \reali)}
    \right)
    e^{A_4 \, t}
  \end{equation}
  Above, $A_1, A_2, A_3, A_4$ are constants depending on $n$, $\Omega$
  and on norms of $Df$ and $F$, see~\eqref{eq:20}. In particular, they
  are independent of $\epsilon$ and of $u_o$.
\end{lemma}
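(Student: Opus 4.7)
The plan is to establish a coupled Gronwall inequality for
$W(t) := \norma{\partial_t u_\epsilon(t)}_{\L1(\Omega;\reali)}$ and
$G(t) := \tv\left(u_\epsilon(t)\right) = \norma{\grad u_\epsilon(t)}_{\L1(\Omega;\reali^n)}$,
exploiting the fact that the homogeneous time-independent boundary condition $u_b \equiv 0$ forces $w := \partial_t u_\epsilon$ to vanish on $I\times\partial\Omega$. Since $u_\epsilon$ is a classical $\C2$-solution, every integration by parts below is justified.

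First, differentiating~\eqref{eq:visco} in $t$ gives a linear parabolic equation for $w$,
\begin{displaymath}
  \partial_t w + \partial_u f \cdot \grad w + c\, w = g + \epsilon\,\Delta w,
  \qquad w|_{\partial\Omega} = 0,
\end{displaymath}
where $c$ and $g$ depend linearly on $\grad u_\epsilon$ with coefficients $\L\infty$-bounded under~{\bf(f)} and~{\bf(F)}. I would test this equation against $\beta_\eta'(w)$ for a smooth convex approximation $\beta_\eta$ of $|\cdot|$ with $\beta_\eta(0)=\beta_\eta'(0)=0$, integrate over $\Omega$, and integrate by parts both the drift $\partial_u f \cdot \grad w$ and the Laplacian $\epsilon\,\Delta w$: all boundary contributions vanish because $w|_{\partial\Omega}=0$, and the diffusion contributes nonpositively by convexity of $\beta_\eta$. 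After sending $\eta\to 0$ and estimating pointwise $|c|,|g|\le C(1+|\grad u_\epsilon|)$, one obtains
\begin{displaymath}
  \frac{d}{dt} W(t) \le A_4 \bigl(1 + W(t) + G(t)\bigr) \,.
\end{displaymath}

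Next, I would obtain a parallel inequality for $G$ by applying the same sign-multiplier technique componentwise to $v_k := \partial_{x_k}u_\epsilon$: differentiate~\eqref{eq:visco} in $x_k$, test the resulting equation against $\beta_\eta'(v_k)$, integrate, and sum over $k$. The subtlety is that $v_k$ does not vanish on $\partial\Omega$ in general, so integrating the Laplacian by parts produces the boundary contribution $\epsilon\int_{\partial\Omega} \sum_k \beta_\eta'(v_k)\,\partial_\nu v_k\,\d{\sigma}$. Because $u_\epsilon|_{\partial\Omega}=0$ forces all tangential derivatives of $u_\epsilon$ to vanish on $\partial\Omega$, only the normal component of $\grad u_\epsilon$ survives, and it can be controlled via the restriction of~\eqref{eq:visco} to $\partial\Omega$: since $\partial_t u_\epsilon=0$ there, this restriction becomes an algebraic identity for $\epsilon\,\Delta u_\epsilon|_{\partial\Omega}$ in terms of the boundary values of $F$ and $f$ at $u=0$. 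This yields
\begin{displaymath}
  \frac{d}{dt} G(t) \le A_4 \bigl(1 + W(t) + G(t)\bigr) \,.
\end{displaymath}

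Summing the two inequalities for $\Phi := W + G$ and applying Gronwall's lemma would give $\Phi(t) \le (\Phi(0) + A_2 t)\,e^{A_4 t}$. The initial values match~\eqref{eq:30}: $G(0)=\norma{\grad u_o}_{\L1(\Omega;\reali^n)}$, while from~\eqref{eq:visco} evaluated at $t=0$,
\begin{displaymath}
  W(0) = \norma{F(0,\cdot,u_o) - \Div f(0,\cdot,u_o) + \epsilon\,\Delta u_o}_{\L1(\Omega;\reali)}
  \le A_1 + A_3 \norma{\grad u_o}_{\L1(\Omega;\reali^n)} + \epsilon\,\norma{\Delta u_o}_{\L1(\Omega;\reali)} \,.
\end{displaymath}
Finally,~\eqref{eq:dipTempoPar} is immediate from
\begin{displaymath}
  \norma{u_\epsilon(t)-u_\epsilon(s)}_{\L1(\Omega;\reali)}
  \le \int_s^t W(\tau)\,\d{\tau} \le \mathcal{L}_\epsilon(\max\{s,t\})\,|t-s|
\end{displaymath}
together with the monotonicity of $\mathcal{L}_\epsilon$ in $t$. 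The hard part will be the boundary analysis in the $G$-estimate: controlling the normal-trace contribution uniformly in the regularization parameter $\eta$ requires separating tangential from normal components of $\grad u_\epsilon$ and carefully invoking the restriction of~\eqref{eq:visco} to $\partial\Omega$, whereas the $W$-estimate is essentially clean since $w$ vanishes on $\partial\Omega$.
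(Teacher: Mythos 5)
Your overall strategy coincides with the paper's: differentiate~\eqref{eq:visco} in $t$ and in each $x_j$, multiply by a smooth approximation of the sign, integrate over $\Omega$, and close a Gronwall inequality for $W+G$; your treatment of $W$ (where the boundary terms indeed vanish because $\partial_t u_\epsilon=0$ on $\partial\Omega$), of $W(0)$ via the equation at $t=0$, and of~\eqref{eq:dipTempoPar} is the paper's. The gap is in the boundary analysis for $G$, which you flag as the hard part but then sketch incorrectly. First, the Laplacian is not the only source of boundary integrals: to avoid the uncontrolled quantity $\int_\Omega\norma{\grad\partial_j u_\epsilon}\d{x}$ you must also integrate by parts the convective contribution $\int_\Omega \partial_u f\cdot\grad\sigma_\eta(\partial_j u_\epsilon)\d{x}$, and this produces the boundary term $\int_{\partial\Omega}\sigma_\eta\left(\partial_j u_\epsilon\right)\,\partial_u f\cdot\nu\,\d\xi$, which carries no factor $\epsilon$ and is of size $\int_{\partial\Omega}\modulo{\partial_\nu u_\epsilon}\d\xi$; because of the boundary layer this is not bounded uniformly in $\epsilon$, so it cannot be absorbed directly. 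Second, the restriction of~\eqref{eq:visco} to $\partial\Omega$ is \emph{not} an identity giving $\epsilon\,\Delta u_\epsilon\vert_{\partial\Omega}$ purely in terms of $F$ and $f$ at $u=0$: since only $\partial_t u_\epsilon$ and the tangential derivatives vanish there, it reads $\partial_u f (t,\xi,0)\cdot\nu\;\partial_\nu u_\epsilon = \epsilon\,\Delta u_\epsilon + F (t,\xi,0) - \div f (t,\xi,0)$, i.e.\ it still contains the $\epsilon$-uncontrolled normal derivative (equivalently $\epsilon\,\partial^2_{\nu\nu}u_\epsilon$). Neither boundary term is controlled separately; the proof works only through their combination.

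Concretely, the paper rewrites the convective boundary term through this boundary identity (using $\partial_j u_\epsilon=\partial_\nu u_\epsilon\,\nu_j$, see~\eqref{eq:13}), obtaining $\int_{\partial\Omega}\frac{\sigma_\eta\left(\partial_j u_\epsilon\right)}{\partial_j u_\epsilon}\left(\epsilon\,\Delta u_\epsilon+F-\div f\right)\nu_j\,\d\xi$, and pairs its $\epsilon\,\Delta u_\epsilon$ part with the viscous boundary term $\epsilon\int_{\partial\Omega}\sigma'_\eta\left(\partial_j u_\epsilon\right)\partial_\nu\left(\partial_j u_\epsilon\right)\d\xi$. By the geometric relations~\eqref{eq:14} the dangerous second normal derivatives enter only through $\left(\sigma'_\eta (z)-\sigma_\eta (z)/z\right)\partial^2_{\nu\nu}u_\epsilon\,\nu_j$, which vanishes as $\eta\to0$ by~\cite[Lemma~A.3]{BardosBrezisBrezis}; what remains is $\O\,\epsilon\,\partial_\nu u_\epsilon$ plus data terms evaluated at $u=0$, and the former is bounded by $\O\,\epsilon\int_\Omega\modulo{\Delta u_\epsilon}\d{x}$, which the equation converts back into $W$, $G$ and constants independent of $\epsilon$. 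Without identifying this cancellation your differential inequality for $G$ cannot be closed uniformly in $\epsilon$, so the proposal as written does not establish~\eqref{eq:tvPar}.
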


\section{The Hyperbolic Problem~(\ref{eq:1})}
\label{sec:HP}

In the particular case of homogeneous boundary condition, we study the
convergence of the sequence $(u_\epsilon)$ as $\epsilon$ tends to
$0$. We also prove that the limit function is a solution to
problem~\eqref{eq:1}, with homogeneous boundary condition.

\begin{proposition}
  \label{prop:limiteZero}
  Fix $\delta \in \left]0, 1\right[$. Let
  conditions~{\bf($\boldsymbol{\Omega_{2,\delta}}$)}, {\bf(f)}
  and~{\bf(F)} hold. Assume moreover that $u_o \in \C{2,\delta}
  (\bar{\Omega} ; \reali)$ is such that $u_o (\xi) = 0$ for $\xi
  \in \partial \Omega$, and $u_{b} (t,\xi) = 0$ for $(t,\xi) \in I
  \times \Omega$.

  Then, the family of solutions $u_\epsilon$ to~\eqref{eq:visco} is
  relatively compact in $\L1$. Any cluster point $u_\infty \in \L1 (I
  \times \Omega; \reali)$ of this family is a solution
  to~\eqref{eq:1}, with $u_b = 0$, in the sense of
  Definition~\ref{def:sol}. Moreover, the following estimates hold:
  \begin{eqnarray}
    \nonumber
    \norma{u_\infty (t)}_{\L\infty (\Omega; \reali)}
    & \leq &
    \norma{u_o}_{\L\infty (\Omega; \reali)} \, e^{c_1 \, t}
    +
    \frac{c_2}{c_1} \left(e^{c_1 \, t} - 1 \right),
    \\
    \label{eq:tb_original}
    \tv \left(u_\infty (t)\right)
    & \leq &
    \mathcal{L} (t)
    \\
    \nonumber
    \norma{u_\infty (t) - u_\infty (s)}_{\L1 (\Omega;\reali)}
    & \leq &
    \mathcal{L} \left(\max\left\{t,s\right\}\right) \; \modulo{t-s},
  \end{eqnarray}
  for $t,s \in I$, where
  \begin{equation}
    \label{eq:21}
    \mathcal{L} (t)
    =
    \left(
      A_1
      +
      A_2 \, t
      +
      A_3 \norma{\grad u_o}_{\L1 (\Omega; \reali^n)}
    \right)
    e^{A_4 \, t}
  \end{equation}
  Above, $c_1, c_2, A_1, A_2, A_3, A_4$ are constants depending on
  $n$, $\Omega$ and on norms of $Df$ and $F$, see~\eqref{eq:c1c2}
  and~\eqref{eq:20}, all independent of the initial datum.
\end{proposition}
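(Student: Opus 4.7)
The plan has three parts: establish relative compactness of $\{u_\epsilon\}$ in $\L1$, pass to the limit in the estimates of Lemmas~\ref{lem:stimaLinf} and~\ref{lem:tvParZero} to recover the three inequalities in the statement, and finally verify that any cluster point $u_\infty$ satisfies the Kru\v zkov inequality~\eqref{eq:4} with $u_b = 0$.

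For compactness, I would combine the spatial $\BV$ bound~\eqref{eq:tvPar} with the time--Lipschitz estimate~\eqref{eq:dipTempoPar}. Both are uniform in $\epsilon$: the only $\epsilon$-dependent term in~\eqref{eq:30} is $\epsilon \norma{\Delta u_o}_{\L1 (\Omega; \reali)}$, which vanishes in the limit. Helly's theorem applied slicewise in $t$, combined with a Fr\'echet--Kolmogorov argument in $t$ based on~\eqref{eq:dipTempoPar}, yields a subsequence (not relabeled) converging in $\L1 (I \times \Omega; \reali)$ and a.e.\ to some $u_\infty$. The three estimates for $u_\infty$ then follow: the $\L\infty$ bound by passing to the limit pointwise from Lemma~\ref{lem:stimaLinf} with $u_b = 0$; the $\BV$ estimate~\eqref{eq:tb_original} by lower semicontinuity of $\tv$ under $\L1$ convergence combined with $\epsilon \to 0$ in~\eqref{eq:30}; and the time-Lipschitz estimate by passing to the limit in~\eqref{eq:dipTempoPar}.

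The bulk of the work is verifying~\eqref{eq:4}. I would follow the classical viscous approximation argument. Fix $k \in \reali$ and $\phi \in \Cc2 (\left]-\infty, T\right[ \times \reali^n; \reali^+)$, and choose a smooth convex approximation $\eta_\sigma \in \C2 (\reali; \reali^+)$ of $\modulo{\cdot}$ with $\eta_\sigma' \to \sgn$ as $\sigma \to 0$, letting $\mathcal{F}_\sigma$ denote the corresponding entropy flux. Multiplying the PDE in~\eqref{eq:visco} by $\eta_\sigma' (u_\epsilon - k)\, \phi$, integrating over $I \times \Omega$, integrating by parts twice in the viscous term, and using $u_\epsilon = 0$ on $\partial\Omega$ produces an inequality of the form prescribed by~\eqref{eq:4} with $\modulo{\cdot} \mapsto \eta_\sigma$ and $\sgn \mapsto \eta_\sigma'$, plus three viscous contributions: a bulk term $\epsilon \int \eta_\sigma (u_\epsilon - k)\, \Delta \phi$ which vanishes as $\epsilon \to 0$; a nonpositive bulk term $-\epsilon \int \eta_\sigma'' (u_\epsilon - k)\, \modulo{\grad u_\epsilon}^2 \phi$ which may be dropped by convexity; and a viscous boundary term $-\epsilon \int_I \int_{\partial\Omega} \eta_\sigma' (-k)\, \partial_\nu u_\epsilon\, \phi\, \d\xi \d{t}$.

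The viscous boundary term is the main obstacle. Following the strategy of~\cite[p.~1029]{BardosLerouxNedelec}, one exploits $u_\epsilon|_{\partial\Omega} = 0$ together with the one-sided behaviour of $u_\epsilon$ near $\partial\Omega$ to show this term has a favourable sign, so that it can be discarded in the limit. The required boundary contribution $-\int_I \int_{\partial\Omega} \sgn (-k) \left(f (t,\xi,\tr u_\infty) - f (t,\xi,k)\right) \cdot \nu\, \phi\, \d\xi \d{t}$ prescribed by Definition~\ref{def:sol} with $u_b = 0$ then emerges directly from the convective boundary contribution as $\sigma \to 0$, upon invoking the continuity of the trace operator with respect to $\L1$ and $\BV$ convergence (cf.\ the Appendix). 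The remaining bulk terms converge by dominated convergence, using the $\L1$ and a.e.\ convergence of $u_\epsilon$, the uniform $\L\infty$ bound, and the $\C2$ regularity of $f$ and $F$. Passing first to the limit $\sigma \to 0$ and then $\epsilon \to 0$ completes the verification of~\eqref{eq:4}.
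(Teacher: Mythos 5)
Your compactness argument and the passage to the limit in the three estimates are fine and essentially match the paper (which invokes a compactness result for $\BV$-bounded families instead of Helly plus Fr\'echet--Kolmogorov, but this is immaterial). The gap is in the verification of~\eqref{eq:4}, precisely at the point you identify as ``the main obstacle''. Your plan is to discard the viscous boundary term $-\epsilon \int_I \int_{\partial\Omega} \eta_\sigma' (-k)\, \partial_\nu u_\epsilon\, \phi \,\d\xi \d{t}$ by a sign argument and to recover the boundary term of Definition~\ref{def:sol} from the convective boundary contribution. Neither step works. First, there is no favourable sign: on $\partial\Omega$ the factor $\eta_\sigma'(u_\epsilon-k)$ freezes to the constant $\eta_\sigma'(-k)$, while $\partial_\nu u_\epsilon$ can have either sign depending on whether $u_\epsilon$ approaches $0$ from above or below, so the product has no definite sign for general $k$ (and indeed the paper shows its limit is the genuinely nontrivial quantity $-\int_I\int_{\partial\Omega}\sgn(-k)\left(f(t,\xi,0)-f\left(t,\xi,\tr u_\infty(t,\xi)\right)\right)\cdot\nu\,\phi\,\d\xi\d{t}$, which is not one-signed).

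Second, the convective boundary contribution cannot produce the term required by Definition~\ref{def:sol}: after integrating by parts in the flux term you evaluate $f$ at $u_\epsilon(t,\xi)=0$, so that contribution involves $f(t,\xi,0)$, not $f(t,\xi,\tr u_\infty)$. Since the trace operator is not continuous under mere $\L1$ convergence with bounded $\BV$ norms --- this is exactly the boundary-layer phenomenon, and in general $\tr u_\infty \neq 0$ even though $\tr u_\epsilon = 0$ for every $\epsilon$ --- ``continuity of the trace operator'' cannot convert $f(t,\xi,0)$ into $f(t,\xi,\tr u_\infty)$. The missing idea is that the trace of the limit enters precisely through the limit of the viscous boundary term you propose to discard: in the paper this limit is computed (formul\ae~\eqref{eq:25}--\eqref{eq:27}) by inserting a cutoff $\Phi_h$ supported near $\partial\Omega$, converting the boundary integral of $\epsilon\,\partial_\nu u_\epsilon$ back into a bulk integral of $\epsilon\,\Delta u_\epsilon$, replacing $\epsilon\,\Delta u_\epsilon$ via the equation~\eqref{eq:visco}, integrating by parts again, and then letting $m\to+\infty$ and $h\to 0$ with the help of Lemma~\ref{lem:uffaMultiD} and Lemma~\ref{lem:tr3}; only the combination of this limit with the convective boundary contribution yields the boundary term of~\eqref{eq:4} with $\tr u_\infty$. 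Without this (or an equivalent identification of the weak limit of $\epsilon\,\partial_\nu u_\epsilon$ on $\partial\Omega$), your argument does not close.
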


\noindent Note that $u_\infty \in (\L\infty \cap \BV)
(I\times\Omega;\reali)$, see Remark~\ref{rem:BV}.

\begin{theorem}
  \label{thm:estConComp}
  Fix $\alpha \in \left]0, 1 \right[$. Let
  conditions~{\bf($\boldsymbol{\Omega_{3,\alpha}}$)}, {\bf(f)}
  and~{\bf(F)} hold. Assume moreover that $u_b \in \C{3,\alpha} (I
  \times \partial \Omega; \reali)$ and $u_o \in \C{2,\delta}
  (\bar{\Omega} ; \reali)$, with $\delta \in \left]\alpha, 1 \right[$,
  are such that
  \begin{equation}
    \label{eq:nonchiamarla}
    u_o (\xi) = 0  = u_b (0, \xi)
    \quad \mbox{ for all }
    \xi \in \partial \Omega.
  \end{equation}
  Then, there exists a unique solution $u \in \C{0,1}\left(I; \L1
    (\Omega; \reali)\right)$ to~\eqref{eq:1} in the sense of
  Definition~\ref{def:sol}. Moreover, the following bounds hold:
  \begin{eqnarray}
    \nonumber
    \norma{u (t)}_{\L\infty (\Omega;\reali)}
    & \leq &
    \left(
      \norma{u_o}_{\L\infty (\Omega;\reali)}
      +
      \norma{u_b}_{\L\infty ([0,t]\times\Omega;\reali)}
    \right)
    e^{c_1 t}
    \\
    \label{eq:L_bound}
    & &
    \qquad
    +
    \frac{c_2 + \norma{\partial_t u_b}_{\L\infty ([0,t]\times\Omega;\reali)}}{c_1}
    \left(e^{c_1 t}-1\right)
    \\
    \nonumber
    \tv\left(u (t)\right)
    & \leq &
    \mathcal{C} (\Omega,f,F,t)
    \left(
      \norma{u_b}_{\C{3,\alpha} ([0,t]\times\partial\Omega;\reali)}
      +
      \norma{u_b}_{\C{3,\alpha}([0,t]\times\partial\Omega;\reali)}^2
    \right)
    \\
    \label{eq:tv_bound}
    & &
    \times
    \left(
      1
      +
      t
      +
      \tv (u_o)
    \right)
    \\
    \nonumber
    & &
    \times
    \exp
    \left(
      \mathcal{C} (\Omega,f,F,t)
      (1+\norma{u_b}_{\C{2,\alpha} ([0,t]\times\partial\Omega;\reali)})\,t
    \right)
    \\
    \label{eq:t_Lip}
    \norma{u (t) - u (s)}_{\L1 (\Omega;\reali)}
    & \leq &
    \left(\sup_{\tau \in [s,t]} \tv\left(u (\tau)\right)\right)
    \modulo{t-s}
  \end{eqnarray}
  for $t, \, s \in I$, where $c_1$, $c_2$ and $\mathcal{C}
  (\Omega,f,F,t)$ are independent of the initial and boundary data,
  see~\eqref{eq:c1c2} and~\eqref{eq:finita}.
\end{theorem}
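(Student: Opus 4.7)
The plan is to reduce problem~\eqref{eq:1} under the stated hypotheses to a problem with homogeneous boundary data, for which Proposition~\ref{prop:limiteZero} already supplies existence and the required quantitative estimates, and then to recover a solution to~\eqref{eq:1} by an additive translation in the $u$-variable.

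First, using the elliptic regularity result Lemma~\ref{lem:elliptic}, I would construct an extension $w \in \C{2,\alpha}(I\times\bar\Omega;\reali)$ with $w(t,\xi) = u_b(t,\xi)$ on $I \times \partial\Omega$, whose norm in sufficiently strong spaces is controlled by the $\C{3,\alpha}$ norm of $u_b$. The compatibility assumption $u_b(0,\xi) = 0$ combined with $u_o(\xi) = 0$ on $\partial\Omega$ ensures that $v_o := u_o - w(0,\cdot)$ still vanishes on $\partial\Omega$ and belongs to $\C{2,\delta}(\bar\Omega;\reali)$. Setting $u = v + w$ transforms~\eqref{eq:1} into the translated balance law~\eqref{eq:23} for $v$, with modified flux $\tilde f(t,x,v) = f(t,x,v+w(t,x))$ and modified source $\tilde F$ absorbing $F(t,x,v+w)$, $\partial_t w$ and the extra divergence produced by the $x$-dependence of $w$ inside $f$.

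A direct verification, using the regularity of $w$, shows that $(\Omega, \tilde f, \tilde F)$ fulfil assumptions \textbf{($\boldsymbol{\Omega_{2,\delta}}$)}, \textbf{(f)} and \textbf{(F)}, with the relevant norms of $\div\tilde f$, $\partial_u\tilde f$, $\partial_u \div\tilde f$, $\tilde F$ and $\partial_u\tilde F$ all controlled by norms of $u_b$. Applying Proposition~\ref{prop:limiteZero} to the translated problem, whose boundary datum is identically zero and whose initial datum is $v_o$, then yields a solution $v \in \C{0,1}(I;\L1(\Omega;\reali))$ together with $\L\infty$, $\tv$ and time-Lipschitz bounds of the form~\eqref{eq:21}. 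Reassembling $u := v + w$ and estimating $w$, $\grad w$, $\partial_t w$ and $\tv(w(t,\cdot))$ in terms of $\norma{u_b}_{\C{3,\alpha}([0,t]\times\partial\Omega;\reali)}$ produces the quantitative estimates~\eqref{eq:L_bound}, \eqref{eq:tv_bound} and~\eqref{eq:t_Lip}; the quadratic term in $u_b$ in~\eqref{eq:tv_bound} arises precisely because the translated source $\tilde F$ contains a product of the form $\partial_u f(t,x,v+w)\cdot\grad w$, whose $\L\infty$ norm is controlled by $\norma{u_b}_{\C{3,\alpha}}$ and which enters multiplicatively in the exponent of Proposition~\ref{prop:limiteZero}.

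The delicate step, and in my view the main obstacle, is to verify that $u := v + w$ is a solution to~\eqref{eq:1} in the sense of Definition~\ref{def:sol}: the Kru\v zkov entropy inequality is not linear in $u$, so a constant level $k$ for $u$ corresponds, after translation, to the $(t,x)$-dependent level $k - w(t,x)$ for $v$, which is not directly an admissible test constant in the entropy formulation for the translated problem. I would handle this by an \emph{ad hoc} adaptation of the Kru\v zkov doubling of variables technique of~\cite{Kruzkov}: write the entropy inequality for $v(t,x)$ against the constants $k - w(s,y)$ indexed by a second pair of variables $(s,y)$, integrate against a symmetric mollifier supported near the diagonal and pass to the limit as the kernel concentrates, using $w \in \C{2,\alpha}$ to control the remainders produced by the variable frozen level. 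The boundary contribution of Definition~\ref{def:sol} for $u$ is then recovered from the homogeneous boundary condition of $v$ together with the trace identity $\tr u = \tr v + u_b$. Finally, uniqueness and the $\C{0,1}(I;\L1)$ regularity of $u$ follow at once from the Lipschitz continuous dependence on initial and boundary data established in Theorem~\ref{thm:wp}, whose proof rests only on Definition~\ref{def:sol}.
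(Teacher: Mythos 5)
Your proposal follows essentially the same route as the paper: the harmonic lift of $u_b$ provided by Lemma~\ref{lem:elliptic}, the translation $u = v + z$ reducing~\eqref{eq:1} to the homogeneous problem~\eqref{eq:23} handled by Proposition~\ref{prop:limiteZero}, an adapted doubling of variables with the variable level $k - z(s,y)$ to verify that $u$ solves~\eqref{eq:1} in the sense of Definition~\ref{def:sol}, and uniqueness plus $\C{0,1}(I;\L1)$ regularity from Theorem~\ref{thm:wp}. Two minor points of comparison: the paper settles the boundary term by invoking~\eqref{eq:6} of Proposition~\ref{prop:min} applied to $v$ (your "trace identity" step), and the quadratic dependence on $u_b$ in~\eqref{eq:tv_bound} actually enters through the prefactor $\mathcal{A}_2$, via $\norma{\partial^2_{uu} f}_{\L\infty}\norma{D z}_{\W1\infty ([0,t]\times\Omega;\reali)}^2$, rather than through the exponent, which stays linear in $\norma{u_b}_{\C{2,\alpha} ([0,t]\times\partial\Omega;\reali)}$.
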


\noindent Above, Remark~\ref{rem:BV} applies and guarantees that $u
\in (\L\infty \cap \BV) (I\times\Omega;\reali)$.

Following~\cite[Theorem~2]{BardosLerouxNedelec}, we
extend~\cite[Theorem~15.1.5]{Serre2} to the case of balance laws with
time and space dependent flow and source.

\begin{theorem}
  \label{thm:wp}
  Let~{\bf($\boldsymbol{\Omega_{2,0}}$)}, {\bf(f)} and~{\bf(F)}
  hold. Set
  \begin{displaymath}
    L_f = \norma{\partial_u f}_{\L{\infty}(\Sigma; \reali^n)}
    \quad \mbox{ and } \quad
    L_F = \norma{\partial_u F}_{\L{\infty}(\Sigma; \reali)} \,.
  \end{displaymath}
  Assume that the initial data $u_o, v_o$ and the boundary data
  $u_{b}, v_{b}$ satisfy~{\bf(C)}.  If $u$ and $v$ are the
  corresponding solutions to~\eqref{eq:1} in the sense of
  Definition~\ref{def:sol}, then, for all $t \in I$, the following
  estimate holds
  \begin{eqnarray*}
    \int_{\Omega} \modulo{u(t,x) - v(t,x)} \d{x}
    & \leq &
    e^{L_F \, t} \int_{\Omega} \modulo{u_o(x) - v_o(x)} \d{x}
    \\
    & &
    +
    L_f
    \int_0^t e^{L_F \, (t- \tau)} \int_{\partial \Omega}
    \modulo{u_{b}(\tau,\xi) - v_{b}(\tau,\xi)} \d{\xi} \d{\tau}.
  \end{eqnarray*}
\end{theorem}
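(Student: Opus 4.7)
My plan is to implement the Kru\v{z}kov doubling of variables technique, with the boundary inequality~\eqref{eq:6} of Proposition~\ref{prop:min} playing the role that the initial trace plays in the Cauchy problem.

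For a non-negative test function $\psi\in\Cc2(\left]-\infty,T\right[\times\reali^n;\reali^+)$ and a standard symmetric mollifier $\rho_\eta$, I would form
$$
\Phi_\eta(t,x,s,y) = \psi\!\left(\tfrac{t+s}{2},\tfrac{x+y}{2}\right)\rho_\eta\!\left(\tfrac{t-s}{2},\tfrac{x-y}{2}\right),
$$
apply~\eqref{eq:4} to $u(t,x)$ with $k=v(s,y)$ and test function $\Phi_\eta$, integrating in $(s,y)\in I\times\Omega$; apply the symmetric inequality to $v(s,y)$ with $k=u(t,x)$ and integrate in $(t,x)\in I\times\Omega$; and add. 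After the change of variables $\tau=(t+s)/2$, $\zeta=(x+y)/2$ the derivatives $\partial_t+\partial_s$ and $\grad_x+\grad_y$ fall only on $\psi$. Exploiting the $\BV$ regularity of $u$ and $v$ to control the discrepancies of $f$, $F$ and $\div f$ between $(t,x)$ and $(s,y)$, and letting $\eta\to 0$, I would obtain an inequality of the form
\begin{align*}
\int_I\!\int_\Omega \Big\{\modulo{u-v}\,\partial_t\psi &+ \sgn(u-v)\bigl[f(t,x,u)-f(t,x,v)\bigr]\!\cdot\!\grad\psi\\
&+ \sgn(u-v)\bigl[F(t,x,u)-F(t,x,v)+\div f(t,x,u)-\div f(t,x,v)\bigr]\psi\Big\}\d{x}\d{t}\\
&+ \int_\Omega\modulo{u_o-v_o}\,\psi(0,x)\d{x} - \mathcal{B}[\psi] \geq 0,
\end{align*}
where $\mathcal{B}[\psi]$ denotes the limiting boundary contribution from the two copies of~\eqref{eq:4}. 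By assumptions~\textbf{(f)} and~\textbf{(F)}, the second-line integrand is pointwise bounded by $\bigl(L_F+\norma{\partial_u\div f}_{\L\infty(\Sigma;\reali)}\bigr)\modulo{u-v}\,\psi$.

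The delicate point, and the main technical obstacle, is the analysis of $\mathcal{B}[\psi]$. After the limit it takes the schematic form
$$
\mathcal{B}[\psi] = \int_I\!\int_{\partial\Omega} \mathcal{Q}\!\left(t,\xi,\tr u,\tr v, u_b, v_b\right)\!\cdot\nu(\xi)\,\psi(t,\xi)\,\d{\xi}\d{t},
$$
with $\mathcal{Q}$ a sign-weighted combination of fluxes $f(t,\xi,\cdot)$ evaluated at the four boundary values $\tr u(t,\xi)$, $\tr v(t,\xi)$, $u_b(t,\xi)$, $v_b(t,\xi)$. I would aim for the estimate
$\mathcal{B}[\psi]\leq L_f\int_I\!\int_{\partial\Omega}\modulo{u_b-v_b}\,\psi\,\d{\xi}\d{t}$
by a case analysis over the finitely many orderings of these four quantities; in each case I would invoke~\eqref{eq:6} successively with $k=u_b(t,\xi)$ and $k=v_b(t,\xi)$, drop the non-negative contributions so produced by the Bardos--Leroux--N\'ed\'elec condition, and retain only the Lipschitz bound $\modulo{f(t,\xi,u_b)-f(t,\xi,v_b)}\leq L_f\modulo{u_b-v_b}$.

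Finally, I would specialize the test function to $\psi(\tau,x)=\omega_\sigma(\tau)\,\theta(x)$, where $\omega_\sigma\geq 0$ is a smooth approximation of $\chi_{[0,t]}$ with $\omega_\sigma'\to\delta_0-\delta_t$, and $\theta\in\Cc{\infty}(\reali^n;[0,1])$ is identically equal to $1$ on a neighbourhood of $\bar\Omega$. This choice annihilates $\grad\psi$ inside $\Omega$ and reduces $\psi$ on $\partial\Omega$ to $\omega_\sigma(\tau)$. Letting $\sigma\to 0$ yields the integral inequality
$$
\int_\Omega\modulo{u(t)-v(t)}\d{x}
\leq
\int_\Omega\modulo{u_o-v_o}\d{x}
+ L_F\!\int_0^t\!\int_\Omega\modulo{u-v}\d{x}\d{\tau}
+ L_f\!\int_0^t\!\int_{\partial\Omega}\modulo{u_b-v_b}\d{\xi}\d{\tau},
$$
to which Gronwall's lemma applied in $t$ to the map $\tau\mapsto\int_\Omega\modulo{u(\tau)-v(\tau)}\d{x}$ produces the announced bound.
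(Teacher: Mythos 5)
Your overall strategy is the one the paper uses (Kru\v{z}kov doubling plus a case analysis on the boundary term based on Proposition~\ref{prop:min}, then Gronwall), but the central interior computation as you state it is wrong, and it matters for the constant in the theorem. After the doubling, the limit inequality does \emph{not} contain the term $\sgn(u-v)\bigl[\div f(t,x,u)-\div f(t,x,v)\bigr]\psi$: the commutator terms produced by $\grad_x\rho_\eta$, $\grad_y\rho_\eta$ acting on the $x$-dependence of $f$ (the very ``discrepancies'' you propose merely to ``control'' via $\BV$ regularity) must be Taylor-expanded and they cancel this divergence difference exactly, leaving only $|u-v|\partial_t\psi+\sgn(u-v)(f(t,x,u)-f(t,x,v))\cdot\grad\psi+\sgn(u-v)(F(t,x,u)-F(t,x,v))\psi$, as in~\eqref{eq:Kruz} (this is Kru\v{z}kov's Formula~(3.12); it is also what the formal smooth computation $\partial_t|u-v|+\Div\bigl[\sgn(u-v)(f(t,x,u)-f(t,x,v))\bigr]\le\sgn(u-v)(F(u)-F(v))$ predicts). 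If you keep the extra term and bound it by $\norma{\partial_u\div f}_{\L\infty(\Sigma;\reali)}\modulo{u-v}\psi$, Gronwall gives the rate $L_F+\norma{\partial_u\div f}_{\L\infty(\Sigma;\reali)}$, not $L_F$, so the stated estimate is not proved; your final pre-Gronwall inequality, which carries only $L_F$, silently drops this term and is therefore inconsistent with your own intermediate bound. The fix is to carry out the cancellation, which is exactly what the paper (following Kru\v{z}kov) does.

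On the boundary term your route differs from the paper's and is potentially viable, but the decisive steps are left schematic. The paper uses test functions vanishing on $I\times\partial\Omega$ (condition~\eqref{eq:8}), so the doubling produces no boundary integrals; the trace term $-\sgn(\tr u-\tr v)(f(\tr u)-f(\tr v))\cdot\nu$ is then generated afterwards through the cutoff $1-\Phi_h$ and Lemmas~\ref{lem:uffaMultiD}, \ref{lem:tr3}, and estimating it requires \eqref{eq:6} first with $k=\tr v$ and $k=\tr u$ (giving~\eqref{eq:5}) and only then \eqref{eq:alive} (equivalently \eqref{eq:6} at $k=u_b,v_b$) in the case analysis. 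In your version the two copies of~\eqref{eq:4} contribute $\mathcal{B}[\psi]$ directly, and if the limit indeed yields the mixed combination $\frac12[\sgn(u_b-\tr v)-\sgn(v_b-\tr u)][f(\tr u)-f(\tr v)]\cdot\nu$, then the table argument with $k=u_b,v_b$ alone closes; note however that \eqref{eq:6} at $k=u_b,v_b$ by itself does \emph{not} suffice to bound $-\sgn(\tr u-\tr v)(f(\tr u)-f(\tr v))\cdot\nu$, so everything hinges on identifying $\mathcal{Q}$ precisely. That identification is itself nontrivial: as $\eta\to0$ you must justify $v(s,y)\to\tr v(t,\xi)$ when the mollifier concentrates at a boundary point, and the discontinuous factor $\sgn\bigl(u_b(t,\xi)-v(s,y)\bigr)$ need not converge to $\sgn(u_b-\tr v)$ on the set where $u_b=\tr v$, so the limit integrand may involve a weight in $[-1,1]$ rather than $\sgn(0)=0$ and the case analysis must accommodate it. These points need to be argued, not asserted; the paper's two-step route (interior test functions first, boundary-layer cutoff afterwards) is precisely how it avoids them.
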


\section{Proofs Related to the Parabolic Problem}
\label{sec:PPP}

\begin{proofof}{Lemma~\ref{lem:EeUpar}}
  To improve the readability, we write $u$ instead of $u_\epsilon$. We
  apply~\cite[Chapter~7, \S~4, Theorem~9]{FriedmanParabolic}. To this
  aim, in the notation of~\cite[\S~4]{FriedmanParabolic}, we verify
  the required assumptions with reference to $Lu = \f (t,x,u,\grad
  u)$, where
  \begin{align*}
    L u = \ & \epsilon \, \Delta u - \partial_t u
    \\
    \f (t,x,u,w) = \ & \div f (t,x,u) + \partial_u f (t,x,u) \cdot w -
    F (t,x,u) \,.
  \end{align*}
  The boundary and initial data $\psi$ in~\cite{FriedmanParabolic}
  corresponds here to the function $\bar u$. The required
  $\C{2,\alpha}$ regularity of $S = I \times \partial\Omega$ is
  ensured by the hypothesis. The parabolicity
  condition~\cite[Chapter~7, \S~2, p.191, (A)]{FriedmanParabolic}
  holds with $H_o = \epsilon$. The condition~\cite[Chapter~7, \S~4,
  p.204, (B')]{FriedmanParabolic} on the coefficients of $L$ is
  immediately satisfied: the only non-zero coefficient is the constant
  $\epsilon$. By hypothesis, the function $\bar u$ is in
  $\C{2,\delta}$, for $\alpha < \delta < 1$. The H\"older continuity
  of $\f$ follows from~\textbf{(f)} and~\textbf{(F)}.
  Concerning~\cite[Chapter~7, \S~2, p.203,
  Formula~(4.10)]{FriedmanParabolic}, it reads:
  \begin{align*}
    u \, \f (t,x,u,0) = \ & u \left(\div f (t,x,u) - F (t,x,u)\right)
    \\
    \leq \ & \modulo{u} \left( \norma{\div f (\cdot, \cdot,
        0)}_{\L\infty (I \times \bar\Omega; \reali)} + \norma{F
        (\cdot, \cdot, 0)}_{\L\infty (I \times \bar\Omega; \reali)}
    \right)
    \\
    & + u^2 \left( \norma{\partial_u \div f}_{\L\infty (I \times
        \bar\Omega \times \reali; \reali)} + \norma{\partial_u
        F}_{\L\infty (I \times \bar\Omega \times \reali; \reali)}
    \right)
    \\
    \leq \ & A_1 u^2 + A_2
  \end{align*}
  for suitable positive $A_1, A_2$, by~\textbf{(f)} and~\textbf{(F)}.

  \noindent Passing to~\cite[Chapter~7, \S~2, p.205,
  Formula~(4.17)]{FriedmanParabolic}
  \begin{align*}
    \modulo{\f (t,x,u,w)} \leq \ & \modulo{\div f (t,x,u)} +
    \norma{\partial_u f (t,x,u)} \norma{w} + \modulo{F (t,x,u)}
    \\
    \leq \ & \left( \norma{\div f (\cdot, \cdot, 0)}_{\L\infty (I
        \times \bar\Omega; \reali)} + \norma{F (\cdot, \cdot,
        0)}_{\L\infty (I \times \bar\Omega; \reali)} \right)
    \\
    & + \left( \norma{\partial_u \div f}_{\L\infty (I \times
        \bar\Omega \times \reali; \reali)} + \norma{\partial_u
        F}_{\L\infty (I \times \bar\Omega \times \reali; \reali)}
    \right) \modulo{u}
    \\
    & + \norma{\partial_u f}_{\L\infty (I\times\bar\Omega \times
      \reali; \reali^n)} \norma{w}
    \\
    \leq \ & A (\modulo{u}) + \mu \, \norma{w}
  \end{align*}
  for a non decreasing function $A$ and a positive scalar $\mu$,
  by~\textbf{(f)} and~\textbf{(F)}.  Lastly, the compatibility
  condition $L\bar u (0,x) = \f (0,x,\bar u, \grad \bar u)$ on
  $\partial \Omega$ holds by~\eqref{eq:smoothData}.

  We can thus apply~\cite[Chapter~7, \S~4,
  Theorem~9]{FriedmanParabolic}, obtaining the existence of a solution
  $u_\epsilon$ to~\eqref{eq:visco} in the class $\C{2,\gamma} (I
  \times \bar \Omega; \reali)$ for $0 < \gamma <
  1$. Moreover,~\cite[Chapter~7, \S~4, Theorem~6]{FriedmanParabolic}
  ensures the uniqueness of the solution. The verification that the
  necessary assumptions are satisfied is here immediate.
\end{proofof}

\begin{proofof}{Lemma~\ref{lem:stimaLinf}}
  For the sake of readability, we write $u$ instead of $u_\epsilon$.
  We use~\cite[Chapter~1, \S~2,
  Theorem~2.9]{LadysenkajaSolonnikovUralceva}, which refers to
  \begin{displaymath}
    \partial_t u
    -
    \sum_{i,j=1}^n
    a_{ij} (t,x,u,\grad u) \, \partial^2_{ij} u
    +
    a (t,x,u,\grad u)
    =
    0
  \end{displaymath}
  where, in the present case,
  \begin{displaymath}
    \begin{array}{@{}rcl}
      a_{ij} (t, x, u, p)
      & = &
      \epsilon \, \delta_{ij}
      \quad \mbox{ for } i,j = 1, \ldots, n \,,
      \\
      a (t, x, u, p)
      & = &
      \div f (t,x,u)
      +
      \partial_u f (t,x,u) \cdot p
      -
      F (t,x,u) \,.
    \end{array}
  \end{displaymath}
  Condition~{\bf($\boldsymbol{\Omega_{2,\alpha}}$)} ensures the
  necessary regularity of the domain.  By~\textbf{(f)}
  and~\textbf{(F)}, the regularity requirements on $a_{ij}$ and $a$
  are met. Moreover,
  \begin{displaymath}
    \begin{array}{@{}lrcl@{}}
      \mbox{\cite[Formula~(2.29)]{LadysenkajaSolonnikovUralceva}: }
      &
      \displaystyle
      \sum_{i,j=1}^n a_{ij} (t,x,u,0) \xi_i \xi_j
      & = &
      \epsilon \, \norma{\xi}^2  \, \geq \, 0
      \\[12pt]
      \mbox{\cite[Formula~(2.32)] {LadysenkajaSolonnikovUralceva}: }
      & u \, a (t,x,u,0)
      & = &
      u \, \div f (t,x,u) - u \, F (t,x,u)
      \\
      & &\geq &
      - \Phi (\modulo{u}) \, \modulo{u}
    \end{array}
  \end{displaymath}
  where $b_2=0$ in~\cite[Chapter~1, \S~2,
  Formula~(2.32)]{LadysenkajaSolonnikovUralceva},
  \begin{equation}
    \label{eq:c1c2}
    \Phi (\modulo{u}) =  c_1 \, \modulo{u} + c_2
    \quad \mbox{ and } \quad
    \begin{array}{rcl}
      c_1 & = &
      1 + \norma{\partial_u \div f}_{\L\infty
        (I\times\bar\Omega\times\reali; \reali)} + \norma{\partial_u
        F}_{\L\infty (I\times\bar\Omega\times\reali; \reali)} \,,
      \\
      c_2
      & = & \norma{\div f(\cdot, \cdot, 0)}_{\L\infty
        (I\times\bar\Omega; \reali)} + \norma{F (\cdot, \cdot,
        0)}_{\L\infty (I\times\bar\Omega; \reali)} \,.
    \end{array}
  \end{equation}
  Note that $\Phi$ is nondecreasing, positive and
  condition~\cite[Chapter~1, \S~2,
  Formula~(2.32)]{LadysenkajaSolonnikovUralceva} holds.  Hence,
  \cite[Chapter~1, \S~2, Theorem~2.9]{LadysenkajaSolonnikovUralceva}
  applies and the solution $u$ to~\eqref{eq:visco}
  satisfies~\cite[Formula~(2.34)]{LadysenkajaSolonnikovUralceva} with
  $\phi (\xi) = (c_2/c_1) \left( \xi^{c_1} - 1 \right)$, so that
  \begin{displaymath}
    \norma{u}_{\L\infty ([0,t] \times \Omega; \reali)} \leq \
    \left( \norma{u_o}_{\L\infty (\Omega; \reali)} +
      \norma{u_b}_{\L\infty ([0,t] \times \partial\Omega;\reali)}
    \right) e^{c_1\, t} + \frac{c_2}{c_1} \left(e^{c_1\, t} - 1\right) \,.
  \end{displaymath}
  completing the proof.
\end{proofof}

\begin{proofof}{Lemma~\ref{lem:tvParZero}}
  First, define $w_\epsilon \in \C{2,\delta} (\bar \Omega; \reali)$ as
  solution to the elliptic problem
  \begin{displaymath}
    \left\{
      \begin{array}{rcl@{\qquad}r@{\;}c@{\;}l}
        \Delta w_\epsilon
        & = &
        - \Delta u_o
        + \frac{1}{\epsilon} \, \div f (0, x, 0)
        +  \frac{1}{\epsilon} \, \partial_u f (0,x, 0) \cdot \grad u_o (x)
        -  \frac{1}{\epsilon} \, F (0, x, 0)
        &
        x & \in & \Omega \,,
        \\
        w_\epsilon (\xi)
        & = &
        0
        & \xi & \in & \partial\Omega \,.
      \end{array}
    \right.
  \end{displaymath}
  The elliptic problem above admits a unique solution $w_\epsilon \in
  \C{2,\delta} (\bar\Omega; \reali)$ thanks to~\cite[Chapter~3, \S~1,
  Theorem~1.3]{LadysenkajaUralcevaElliptic}. Indeed, with reference to
  the equation $L w_\epsilon(x) = \f(x)$ where
  \begin{align*}
    L w_\epsilon = \ & \sum_{i,j=1}^n a_{i,j} (x) \, \partial_{ij}^2
    w_\epsilon + \sum_{i=1}^n a_i (x) \, \partial_i w_\epsilon + a(x)
    \, w_\epsilon = \Delta w_\epsilon
    \\
    \f(x) = \ & - \Delta u_o + \frac{1}{\epsilon} \, \div f (0, x, 0)
    + \frac{1}{\epsilon} \, \partial_u f (0,x, 0) \cdot \grad u_o (x)
    - \frac{1}{\epsilon} \, F (0, x, 0),
  \end{align*}
  the hypotheses of~\cite[Chapter~3, \S~1,
  Theorem~1.3]{LadysenkajaUralcevaElliptic} are all satisfied: the
  coefficients of $L$ belong to $\C{\delta} (\bar\Omega; \reali)$ and
  satisfy the ellipticity condition; we have $a(x) = 0$; the boundary
  $\partial \Omega$ is of class $\C{2,\delta}$ by hypothesis; the
  function $\f$ is in $\C{\delta} (\bar\Omega; \reali)$ thanks to the
  hypothesis on $u_o$, to~{\bf{(f)}} and~{\bf{(F)}}; the homogeneous
  boundary condition implies that, in the notation of~\cite[Chapter~3,
  \S~1]{LadysenkajaUralcevaElliptic}, $\phi= 0$, which is clearly in
  $\C{2,\delta} (\partial \Omega; \reali)$.

  Define now $\bar u_\epsilon (t,x) = u_o (x) + w_\epsilon (x)$ for
  every $(t,x) \in I \times \Omega$: this function $\bar u_\epsilon$
  belongs to $\C{2,\delta} (I \times \bar \Omega; \reali)$ and it
  satisfies~\eqref{eq:smoothData} and~\eqref{eq:compInDat}, with $u_b
  =0$. Since $\partial \Omega$ is of class $\C{2,\delta}$, it is also
  of class $\C{2,\alpha}$ for any $\alpha \in \left]0,
    \delta\right[$. Hence, Lemma~\ref{lem:EeUpar} yields that there
  exists a unique solution $u_\epsilon \in \C{2,\gamma} (I \times
  \bar\Omega; \reali)$, for a $\gamma \in \left]0,1\right[$,
  to~\eqref{eq:visco} with $u_b = 0$.

  Following~\cite{BardosLerouxNedelec, DafermosBook, GagneuxBook}, for
  $\eta>0$ introduce the functions
  \begin{equation}
    \label{eq:sigma}
    \sigma_\eta (z) = \left\{
      \begin{array}{lr@{\,}c@{\,}l}
        z - \eta/2 & z & > & \eta
        \\
        z^2/(2\eta) & z & \in & [-\eta, \eta]
        \\
        -z + 3\eta/2 & z & < & -\eta \,,
      \end{array}
    \right.
    \qquad\qquad
    \sigma'_\eta (z) = \left\{
      \begin{array}{lr@{\,}c@{\,}l}
        1 & z & > & \eta
        \\
        z/\eta & z & \in & [-\eta, \eta]
        \\
        -1 & z & < & -\eta .
      \end{array}
    \right.
  \end{equation}
  Note that $u_\epsilon$ is of class $\C2$, hence,
  by~\eqref{eq:visco}, $\Delta u_\epsilon$ is of class $\C1$ and we
  can differentiate with respect to $t$ the equation
  in~\eqref{eq:visco}:
  \begin{equation}
    \label{eq:derT}
    \begin{array}{cl}
      &
      \partial^2_{tt} u_\epsilon (t,x)
      +
      \Div \left(
        \partial_t f\left(t,x,u_\epsilon (t,x)\right)
        +
        \partial_u f \left(t,x,u_\epsilon (t,x)\right) \partial_t u_\epsilon (t,x)
      \right)
      \\
      = &
      \partial_t F \left(t,x,u_\epsilon (t,x)\right)
      +
      \partial_u F\left(t,x,u_\epsilon (t,x)\right) \,
      \partial_t u_\epsilon (t,x)
      +
      \partial_t \Delta u_\epsilon (t,x).
    \end{array}
  \end{equation}
  Multiply by $\sigma'_\eta \left(\partial_t u_\epsilon (t,x)\right)$
  and integrate over $\Omega$ each term above to obtain
  \begin{equation}
    \label{eq:derT1}
    \!
    \int_\Omega
    \partial^2_{tt} u_\epsilon (t,x)
    \sigma'_\eta\left(\partial_t u_\epsilon (t,x)\right)
    \d{x}
    =
    \frac{\d{~}}{\d{t}}
    \int_\Omega
    \int_0^{\partial_tu_\epsilon (t,x)}
    \sigma'_\eta (v)
    \d{v}
    \d{x}
    \stackrel{\eta\to 0}{=}
    \frac{\d{~}}{\d{t}}
    \int_\Omega
    \modulo{\partial_t u_\epsilon (t,x)}
    \d{x}.
  \end{equation}
  Concerning the second term on the first line of~\eqref{eq:derT}, we
  have
  \begin{align}
    \nonumber & \int_\Omega \Div
    \partial_t f\left(t,x,u_\epsilon (t,x)\right) \,
    \sigma'_\eta\left(\partial_t u_\epsilon (t,x)\right) \d{x}
    \\
    \nonumber = \ & \int_\Omega \left[ \div
      \partial_t f\left(t,x,u_\epsilon (t,x)\right) \,
      \sigma'_\eta\left(\partial_t u_\epsilon (t,x)\right) +
      \partial_u \partial_t f\left(t,x,u_\epsilon (t,x)\right) \,
      \grad u_\epsilon (t,x) \, \sigma'_\eta\left(\partial_t
        u_\epsilon (t,x)\right) \right] \d{x}
    \\
    \label{eq:derT2}
    \geq \ & - \mathcal{L}^n (\Omega) \norma{\div \partial_t
      f}_{\L\infty ([0,t]\times \Omega \times \mathcal{U}(t);\reali)}
    - \norma{\grad u_\epsilon (t)}_{\L1 (\Omega;\reali^n)}
    \norma{\partial_u \partial_t f}_{\L\infty ([0,t]\times \Omega
      \times \mathcal{U}(t);\reali^n)}
  \end{align}
  and
  \begin{align}
    \nonumber & \int_\Omega \Div \left(
      \partial_u f \left(t,x,u_\epsilon (t,x)\right) \,
      \partial_t u_\epsilon (t,x) \right) \sigma'_\eta\left(\partial_t
      u_\epsilon (t,x)\right) \d{x}
    \\
    \nonumber = \ & \int_{\partial\Omega} \sigma'_\eta\left(\partial_t
      u_\epsilon (t,\xi)\right) \;
    \partial_u f\left(t, \xi, u_\epsilon (t,\xi)\right) \cdot \nu
    (\xi) \,
    \partial_t u_\epsilon (t,\xi) \d\xi
    \\
    \nonumber & - \int_\Omega
    \partial_t u_\epsilon (t,x) \;
    \partial_u f\left(t, x, u_\epsilon (t,x)\right) \cdot
    \grad \partial_t u_\epsilon (t,x) \; \sigma''_\eta\left(\partial_t
      u_\epsilon (t,x)\right) \d{x}
    \\
    \nonumber \geq \ & - \norma{\partial_u f}_{\L\infty ([0,t] \times
      \Omega \times \mathcal{U}(t);\reali^n)} \int_{\partial\Omega}
    \modulo{\partial_t u_\epsilon (t,\xi)} \d\xi
    \\
    \nonumber & - \norma{\partial_u f}_{\L\infty ([0,t] \times \Omega
      \times \mathcal{U}(t);\reali^n)} \int_{\modulo{\partial_t
        u_\epsilon} \leq \eta} \modulo{\partial_t u_\epsilon (t,x)} \,
    \grad \left(\sigma'_\eta\left(\partial_t u_\epsilon
        (t,x)\right)\right) \, \d{x}
    \\
    \nonumber = \ & - \norma{\partial_u f}_{\L\infty ([0,t] \times
      \Omega \times \mathcal{U}(t);\reali^n)} \int_{\partial\Omega}
    \modulo{\partial_t u_b (t,\xi)} \d\xi
    \\
    \nonumber & - \norma{\partial_u f}_{\L\infty ([0,t] \times \Omega
      \times \mathcal{U}(t);\reali^n)} \int_{\modulo{\partial_t
        u_\epsilon} \leq \eta} \grad\partial_t u_\epsilon (t,x) \,
    \d{x}
    \\
    \nonumber \stackrel{\eta \to 0}{=} \ & - \norma{\partial_u
      f}_{\L\infty ([0,t] \times \Omega \times
      \mathcal{U}(t);\reali^n)} \int_{\partial\Omega}
    \modulo{\partial_t u_b (t,\xi)} \d\xi
    \\
    \nonumber \stackrel{u_b= 0}{=} \ & 0,
  \end{align}
  where, in the last limit, we used~\cite[Lemma~2]{BardosLerouxNedelec}.

  To estimate the first two terms on the second line
  of~\eqref{eq:derT}, we compute:
  \begin{equation}
    \label{eq:derT3}
    \begin{aligned}
      & \displaystyle \int_\Omega \left(
        \partial_t F \left(t,x,u_\epsilon (t,x)\right) +
        \partial_u F\left(t,x,u_\epsilon (t,x)\right) \,
        \partial_t u_\epsilon (t,x) \right)
      \sigma'_\eta\left(\partial_t u_\epsilon (t,x)\right) \d{x}
      \\
      \leq \ & \displaystyle \mathcal{L}^n (\Omega) \,
      \norma{\partial_ t F}_{\L\infty ([0,t]\times \Omega \times
        \mathcal{U}(t);\reali)} + \norma{\partial_u F}_{\L\infty
        ([0,t]\times \Omega \times \mathcal{U}(t);\reali)} \,
      \norma{\partial_t u_\epsilon (t)}_{\L1 (\Omega;\reali^n)} \,.
    \end{aligned}
  \end{equation}
  To bound the last term on the second line of~\eqref{eq:derT}, we
  proceed as follows:
  \begin{align*}
    & \epsilon \int_\Omega
    \partial_t \Delta u_\epsilon (t,x) \, \sigma'_\eta\left(\partial_t
      u_\epsilon (t,x)\right) \d{x}
    \\
    = \ & \epsilon \int_{\partial\Omega}
    \partial_t \grad u_\epsilon (t,\xi) \cdot \nu (\xi) \;
    \sigma'_\eta\left(\partial_t u_\epsilon (t,\xi)\right) \d{\xi} -
    \int_\Omega \norma{\partial_t \grad u_\epsilon (t,x)}^2 \;
    \sigma''_\eta\left(\partial_t u_\epsilon (t,x)\right) \d{x}
    \\
    \leq \ & \epsilon \int_{\partial\Omega}
    \partial_t \grad u_\epsilon (t,\xi) \cdot \nu (\xi) \;
    \sigma'_\eta\left(\partial_t u_\epsilon (t,\xi)\right) \d{\xi}
    \\
    = \ & \epsilon \int_{\partial\Omega} \grad \sigma_\eta
    \left(\partial_t u_\epsilon (t,\xi)\right) \cdot \nu (\xi) \,
    \d{\xi}
    \\
    \stackrel{u_b= 0}{=} \ & 0 \,.
  \end{align*}
  Integrate~\eqref{eq:derT} in time over $[0,t]$,
  using~\eqref{eq:derT1}, \eqref{eq:derT2} and~\eqref{eq:derT3} to
  obtain
  \begin{align}
    \nonumber & \norma{\partial_t u_\epsilon (t)}_{\L1
      (\Omega;\reali)}
    \\
    \nonumber \leq \ & \norma{\partial_t u_\epsilon (0)}_{\L1
      (\Omega;\reali)}
    \\
    \nonumber & + \mathcal{L}^n (\Omega) \, t \,
    \norma{\div \partial_t f}_{\L\infty ([0,t]\times \Omega \times
      \mathcal{U}(t);\reali)}
    \\
    \nonumber & + \int_0^t \norma{\partial_u \partial_t f}_{\L\infty
      ([0,\tau]\times \Omega \times \mathcal{U}(t);\reali^n)} \,
    \norma{\grad u_\epsilon (\tau)}_{\L1 (\Omega;\reali^n)} \, \d\tau
    \\
    \nonumber & + \mathcal{L}^n (\Omega) \, t \, \norma{\partial_t
      F}_{\L\infty ([0,t]\times \Omega \times \mathcal{U}(t);\reali)}
    \\
    \nonumber & + \int_0^t \norma{\partial_u F}_{\L\infty
      ([0,\tau]\times \Omega \times \mathcal{U}(t);\reali)} \,
    \norma{\partial_t u_\epsilon (\tau)}_{\L1 (\Omega;\reali)} \,
    \d\tau
    \\
    \nonumber \leq \ & \norma{\partial_t u_\epsilon (0)}_{\L1
      (\Omega;\reali)} + \mathcal{L}^n (\Omega) \, t \left(
      \norma{\div \partial_t f}_{\L\infty ([0,t]\times \Omega \times
        \mathcal{U}(t);\reali)} + \norma{\partial_t F}_{\L\infty
        ([0,t]\times \Omega \times \mathcal{U}(t);\reali)} \right)
    \\
    \label{eq:18}
    & + \left( \norma{\partial_u \partial_t f}_{\L\infty ([0,t]\times
        \Omega \times \mathcal{U}(t);\reali^n)} + \norma{\partial_u
        F}_{\L\infty ([0,t]\times \Omega \times
        \mathcal{U}(t);\reali)} \right)
    \\
    \nonumber & \qquad \times \int_0^t \left( \norma{\partial_t
        u_\epsilon (\tau)}_{\L1 (\Omega;\reali)} + \norma{\grad
        u_\epsilon (\tau)}_{\L1 (\Omega;\reali^n)} \right) \d{\tau} .
  \end{align}
  Using the parabolic equation~\eqref{eq:visco}, we can estimate the
  first term in the right hand side above as follows:
  \begin{align}
    \nonumber \norma{\partial_t u_\epsilon (0)}_{\L1 (\Omega;\reali)}
    \leq \ & \norma{\partial_u f}_{\L\infty ([0,t]\times\Omega\times
      \mathcal{U} (t);\reali^n)} \, \norma{\grad u_o}_{\L1
      (\Omega;\reali^n)}
    \\
    \label{eq:19}
    & + \mathcal{L}^n (\Omega) \left( \norma{\div f}_{\L\infty
        ([0,t]\times\Omega\times\mathcal{U}(t);\reali)} +
      \norma{F}_{\L\infty
        ([0,t]\times\Omega\times\mathcal{U}(t);\reali)} \right)
    \\
    \nonumber & + \epsilon \norma{\Delta u_o}_{\L1 (\Omega;\reali)}
    \,.
  \end{align}

  \smallskip

  As noted above, $\Delta u_\epsilon$ is of class $\C1$ and, for $j=1,
  \ldots, n$, we can differentiate the equation in~\eqref{eq:visco}
  with respect to $x_j$ to obtain
  \begin{equation}
    \label{eq:12}
    \partial_t \partial_j u_\epsilon (t,x)
    +
    \Div \frac{\d{~}}{\d{x_j}} f\left(t,x,u_\epsilon (t,x)\right)
    =
    \frac{\d{~}}{\d{x_j}} F\left(t,x,u_\epsilon (t,x)\right)
    +
    \Delta \partial_j u_\epsilon (t,x) \,.
  \end{equation}
  Multiply by $\sigma'_\eta (\partial_j u_\epsilon)$ and integrate
  each term in~\eqref{eq:12} over $\Omega$:
  \begin{equation}
    \label{eq:derX1}
    \int_\Omega
    \partial_t \partial_j u_\epsilon (t,x) \;
    \sigma'_\eta\left(\partial_j u_\epsilon (t,x)\right)
    \d{x}
    =
    \int_\Omega \frac{\d~}{\d{t}}
    \sigma_\eta\left(\partial_j u_\epsilon (t,x)\right) \d{x}
    =
    \frac{\d~}{\d{t}}
    \int_\Omega
    \sigma_\eta\left(\partial_j u_\epsilon (t,x)\right)
    \d{x} \,.
  \end{equation}

  To estimate the second term in the left hand side of~\eqref{eq:12},
  we follow~\cite[Chapter~6, Proof of Lemma~6.9.5]{DafermosBook}, use
  the equality $\grad \partial_j u_\epsilon (t,x) \, \sigma'_\eta
  \left(\partial_j u_\epsilon (t,x)\right) = \grad \sigma'_\eta
  \left(\partial_j u_\epsilon (t,x)\right)$ and the Divergence
  Theorem:
  \begin{align}
    \nonumber & \int_\Omega \Div \frac{\d{~}}{\d{x_j}}
    f\left(t,x,u_\epsilon (t,x)\right) \sigma'_\eta\left(\partial_j
      u_\epsilon (t,x)\right) \d{x}
    \\
    \nonumber = \ & \int_\Omega \Div \partial_j f\left(t,x,u_\epsilon
      (t,x)\right) \sigma'_\eta\left(\partial_j u_\epsilon
      (t,x)\right) \d{x}
    \\
    \nonumber &+ \int_\Omega \Div \left(
      \partial_u f\left(t,x,u_\epsilon (t,x)\right) \,
      \partial_j u_\epsilon (t,x) \right) \sigma'_\eta\left(\partial_j
      u_\epsilon (t,x)\right) \d{x}
    \\
    \nonumber =\ & \int_\Omega \div \partial_j f\left(t,x,u_\epsilon
      (t,x)\right) \sigma'_\eta \left(\partial_j u_\epsilon
      (t,x)\right) \d{x}
    \\
    \nonumber & + \int_\Omega
    \partial_u \partial_j f\left(t,x,u_\epsilon (t,x)\right) \,
    \partial_j u_\epsilon (t,x) \, \sigma'_\eta \left(\partial_j
      u_\epsilon (t,x)\right) \d{x}
    \\
    \nonumber & + \int_\Omega \Div \left( \partial_u
      f\left(t,x,u_\epsilon (t,x)\right)\right)
    \partial_j u_\epsilon (t,x) \, \sigma'_\eta \left(\partial_j
      u_\epsilon (t,x)\right) \d{x}
    \\
    \nonumber & + \int_\Omega \partial_u f\left(t,x,u_\epsilon
      (t,x)\right) \cdot \grad \sigma_\eta \left(\partial_j u_\epsilon
      (t,x) \right) \d{x}
    \\
    \nonumber \geq \ & - \mathcal{L}^n (\Omega) \, \norma{\grad \div
      f}_{\L\infty ([0,t] \times \Omega \times
      \mathcal{U}(t);\reali^n)}
    \\
    \nonumber & - \norma{\grad \partial_u f}_{\L\infty ([0,t] \times
      \Omega \times \mathcal{U}(t);\reali^{n\times n})} \, \int_\Omega
    \modulo{
      \partial_j u_\epsilon (t,x) \, \sigma'_\eta \left(\partial_j
        u_\epsilon (t,x)\right) } \d{x}
    \\
    \nonumber & + \int_\Omega \Div \left( \partial_u
      f\left(t,x,u_\epsilon (t,x)\right)\right)
    \partial_j u_\epsilon (t,x) \, \sigma'_\eta \left(\partial_j
      u_\epsilon (t,x)\right) \d{x}
    \\
    \nonumber & - \int_\Omega \Div \left( \partial_u
      f\left(t,x,u_\epsilon (t,x)\right)\right) \, \sigma_\eta
    \left(\partial_j u_\epsilon (t,x)\right) \d{x}
    \\
    \nonumber & + \int_{\partial \Omega} \sigma_\eta\left(\partial_j
      u_\epsilon (t,\xi)\right) \,
    \partial_u f\left(t,\xi,u_\epsilon (t,\xi)\right) \cdot \nu (\xi)
    \d\xi
    \\
    \label{eq:derX2}
    \geq \ & - \mathcal{L}^n (\Omega) \, \norma{\grad \div
      f}_{\L\infty ([0,t] \times \Omega \times
      \mathcal{U}(t);\reali^n)}
    \\
    \label{eq:derX3}
    & - \norma{\grad \partial_u f}_{\L\infty ([0,t] \times \Omega
      \times \mathcal{U} (t);\reali^{n\times n})} \int_\Omega \modulo{
      \partial_j u_\epsilon (t,x)} \d{x}
    \\
    \label{eq:derX4}
    & + \int_\Omega \Div \left[ \partial_u f\left(t,x,u_\epsilon
        (t,x)\right)\right] \!  \left[
      \partial_j u_\epsilon (t,x) \sigma'_\eta \left(\partial_j
        u_\epsilon (t,x)\right) - \sigma_\eta \left(\partial_j
        u_\epsilon (t,x)\right) \right] \!  \d{x}
    \\
    \label{eq:derX5}
    & + \int_{\partial \Omega} \sigma_\eta\left(\partial_j u_\epsilon
      (t,\xi)\right) \,
    \partial_u f\left(t,\xi,u_\epsilon (t,\xi)\right) \cdot \nu (\xi)
    \d\xi.
  \end{align}
  For later use, note that, for $\xi \in \partial\Omega$,
  \eqref{eq:visco} is the equality
  \begin{displaymath}
    \partial_u f \left(t, \xi, u_\epsilon (t,\xi)\right) \cdot \nu (\xi) \;
    \partial_\nu u_\epsilon (t,\xi)
    =
    \epsilon \, \Delta u_\epsilon (t,\xi)
    + F\left(t, \xi, u_\epsilon (t,\xi)\right)
    - \div f\left(t, \xi, u_\epsilon (t,\xi)\right).
  \end{displaymath}
  Hence, thanks also to the fact that
  \begin{equation}
    \label{eq:13}
    \partial_\nu u_\epsilon \; \nu_j = \partial_j u_\epsilon \,,
  \end{equation}
  we can now elaborate~\eqref{eq:derX5} as follows:
  \begin{align}
    \nonumber & \int_{\partial \Omega} \sigma_\eta\left(\partial_j
      u_\epsilon (t,\xi)\right) \,
    \partial_u f\left(t,\xi,u_\epsilon (t,\xi)\right) \cdot \nu (\xi)
    \d\xi
    \\
    \label{eq:derX5bis}
    = \ & \int_{\partial\Omega} \frac{\sigma_\eta\left(\partial_j
        u_\epsilon (t,\xi)\right)} {\partial_j u_\epsilon (t,\xi)}
    \left( \epsilon \Delta u_\epsilon (t,\xi) + F\left(t, \xi,
        u_\epsilon (t,\xi)\right)- \div f\left(t, \xi, u_\epsilon
        (t,\xi)\right) \right)\nu_j (\xi) \d\xi .
  \end{align}
  Here we used the fact that $\sigma_\eta(z) = o (z)$ for $z\to 0$, so
  that the map $z \to \sigma_\eta (z) / z$ is well defined also at
  $z=0$.  Pass now to the first term in the right hand side
  of~\eqref{eq:12}:
  \begin{align}
    \nonumber & \int_\Omega \frac{\d{~}}{\d{x_j}} F (t,x,u_\epsilon)
    \sigma'_\eta \left(\partial_j u_\epsilon(t,x)\right) \d{x}
    \\
    \nonumber = \ & \int_\Omega \left(
      \partial_j F\left(t,x,u_\epsilon (t,x)\right) +
      \partial_u F\left(t,x,u_\epsilon (t,x)\right) \;
      \partial_j u_\epsilon (t,x) \right) \sigma'_\eta
    \left(\partial_j u_\epsilon(t,x)\right) \d{x}
    \\
    \nonumber = \ & \int_\Omega
    \partial_j F\left(t,x,u_\epsilon (t,x)\right) \, \sigma'_\eta
    \left(\partial_j u_\epsilon(t,x)\right) \d{x}
    \\
    \nonumber \ & + \int_\Omega
    \partial_u F\left(t,x,u_\epsilon (t,x)\right) \;
    \partial_j u_\epsilon (t,x) \, \sigma'_\eta \left(\partial_j
      u_\epsilon(t,x)\right) \d{x}
    \\
    \label{eq:derX6}
    \leq \ & \mathcal{L}^n (\Omega) \norma{\grad F}_{\L\infty
      ([0,t]\times \Omega \times \mathcal{U}(t);\reali^n)} +
    \norma{\partial_u F}_{\L\infty ([0,t]\times \Omega \times
      \mathcal{U}(t);\reali)} \int_\Omega \modulo{\partial_j
      u_\epsilon (t,x)} \d{x} \,,
  \end{align}
  while the last term on the right hand side of~\eqref{eq:12} gives
  \begin{align}
    \nonumber & \epsilon \int_\Omega \Delta \partial_j u_\epsilon
    (t,x) \, \sigma'_\eta\left(\partial_j u_\epsilon (t,x)\right)
    \d{x}
    \\
    \nonumber =\ & \epsilon \int_\Omega \div \grad \partial_j
    u_\epsilon (t,x) \, \sigma'_\eta\left(\partial_j u_\epsilon
      (t,x)\right) \d{x}
    \\
    \nonumber = \ & \epsilon \int_{\partial\Omega}
    \sigma'_\eta\left(\partial_j u_\epsilon (t,\xi)\right) \,
    \grad \partial_j u_\epsilon (t,\xi) \cdot \nu (\xi) \d\xi -
    \epsilon \int_\Omega \grad\partial_j u_\epsilon (t,x) \cdot \grad
    \sigma'_\eta (t,x) \d{x}
    \\
    \nonumber = \ & \epsilon \int_{\partial\Omega}
    \sigma'_\eta\left(\partial_j u_\epsilon (t,\xi)\right) \,
    \grad \partial_j u_\epsilon (t,\xi) \cdot \nu (\xi) \d\xi -
    \epsilon \int_\Omega \sigma''_\eta (t,x) \, \norma{\grad\partial_j
      u_\epsilon (t,x)} \d{x}
    \\
    \nonumber \leq \ & \epsilon \int_{\partial\Omega}
    \sigma'_\eta\left(\partial_j u_\epsilon (t,\xi)\right) \,
    \grad \partial_j u_\epsilon (t,\xi) \cdot \nu (\xi) \d\xi
    \\
    \label{eq:derX8}
    \leq \ & \epsilon \int_{\partial\Omega}
    \sigma'_\eta\left(\partial_j u_\epsilon (t,\xi)\right) \,
    \partial_\nu \left(\partial_j u_\epsilon (t,\xi)\right) \d\xi \,.
  \end{align}
  Integrate~\eqref{eq:12} in time over $[0,t]$,
  using~\eqref{eq:derX1}, \eqref{eq:derX2}--\eqref{eq:derX4},
  \eqref{eq:derX5bis}, \eqref{eq:derX6} and~\eqref{eq:derX8} to obtain
  \begin{align}
    \label{eq:intEta}
    & \int_\Omega \sigma_\eta \left(\partial_j u_\epsilon (t,x)\right)
    \d{x}
    \\
    \nonumber \leq \ & \int_\Omega \sigma_\eta \left(\partial_j
      u_\epsilon (0,x)\right) \d{x}
    \\
    \nonumber & + \mathcal{L}^n (\Omega) \, t \, \norma{\grad \div
      f}_{\L\infty ([0,t] \times \Omega \times \mathcal{U}
      (t);\reali^n)}
    \\
    \nonumber & + \int_0^t \norma{\grad \partial_u f}_{\L\infty ([0,t]
      \times \Omega \times \mathcal{U} (t);\reali^{n\times n})} \,
    \norma{\partial_j u_\epsilon (\tau)}_{\L1 (\Omega;\reali)}
    \d{\tau}
    \\
    \nonumber & - \int_0^t \int_\Omega \Div \left[ \partial_u
      f\left(\tau,x,u_\epsilon (\tau,x)\right)\right] \!  \left[
      \partial_j u_\epsilon (\tau,x) \sigma'_\eta \left(\partial_j
        u_\epsilon (\tau,x)\right) - \sigma_\eta \left(\partial_j
        u_\epsilon (\tau,x)\right) \right] \!  \d{x}\d\tau
    \\
    \nonumber & - \int_0^t \int_{\partial\Omega}
    \frac{\sigma_\eta\left(\partial_j u_\epsilon (\tau,\xi)\right)}
    {\partial_j u_\epsilon (\tau,\xi)} \left( \epsilon \Delta
      u_\epsilon (\tau,\xi) + F\left(\tau, \xi, u_\epsilon
        (\tau,\xi)\right) - \div f\left(\tau, \xi, u_\epsilon
        (\tau,\xi)\right) \right)\nu_j (\xi) \d\xi \d\tau
    \\
    \nonumber & + \mathcal{L}^n (\Omega) \, t \, \norma{\grad
      F}_{\L\infty ([0,t]\times\Omega \times \mathcal{U}(t);\reali^n)}
    \\
    \nonumber & + \int_0^t \norma{\partial_u F}_{\L\infty
      ([0,t]\times\Omega \times \mathcal{U}(t);\reali)} \,
    \norma{\partial_j u_\epsilon (\tau)}_{\L1 (\Omega;\reali)} \d\tau
    \\
    \nonumber & + \int_0^t \int_{\partial\Omega} \epsilon \,
    \sigma'_\eta\left(\partial_j u_\epsilon (\tau,\xi)\right) \,
    \partial_\nu \left(\partial_j u_\epsilon (\tau,\xi)\right) \d\xi
    \d\tau
    \\
    \nonumber \leq \ & \int_\Omega \sigma_\eta \left(\partial_j
      u_\epsilon (0,x)\right) \d{x}
    \\
    \nonumber & + \mathcal{L}^n (\Omega) \, t \left( \norma{\grad \div
        f}_{\L\infty ([0,t] \times \Omega \times \mathcal{U}
        (t);\reali^n)} + \norma{\grad F}_{\L\infty ([0,t]\times\Omega
        \times \mathcal{U}(t);\reali^n)} \right)
    \\
    \nonumber & + \left( \norma{\grad \partial_u f}_{\L\infty ([0,t]
        \times \Omega \times \mathcal{U} (t);\reali^{n\times n})} +
      \norma{\partial_u F}_{\L\infty ([0,t]\times\Omega \times
        \mathcal{U}(t);\reali)} \right) \int_0^t \norma{\partial_j
      u_\epsilon (\tau)}_{\L1 (\Omega;\reali)} \d\tau
    \\
    \label{eq:u1}
    & - \int_0^t \int_\Omega \Div \left[
      \partial_u f\left(\tau,x,u_\epsilon (\tau,x)\right)\right] \!
    \left[
      \partial_j u_\epsilon (\tau,x) \sigma'_\eta \left(\partial_j
        u_\epsilon (\tau,x)\right) - \sigma_\eta \left(\partial_j
        u_\epsilon (\tau,x)\right) \right] \!  \d{x}\d\tau
    \\
    \label{eq:u2}
    & + \epsilon \int_0^t \int_{\partial\Omega} \left(
      \sigma'_\eta\left(\partial_j u_\epsilon (\tau,\xi)\right) \,
      \partial_\nu \left(\partial_j u_\epsilon (\tau,\xi)\right) -
      \frac{\sigma_\eta\left(\partial_j u_\epsilon (\tau,\xi)\right)}
      {\partial_j u_\epsilon (\tau,\xi)} \; \Delta u_\epsilon
      (\tau,\xi) \, \nu_j (\xi) \right) \d\xi \d\tau
    \\
    \label{eq:u3}
    & - \int_0^t \int_{\partial\Omega}
    \frac{\sigma_\eta\left(\partial_j u_\epsilon (\tau,\xi)\right)}
    {\partial_j u_\epsilon (\tau,\xi)} \left( F\left(\tau, \xi,
        0\right) - \div f\left(\tau, \xi, 0\right) \right) \nu_j (\xi)
    \d\xi \d\tau \,.
  \end{align}
  To compute the limit $\eta \to 0$, consider first the latter three
  terms above separately:
  \begin{equation}
    \label{eq:u4}
    \lim_{\eta\to 0} \left[\eqref{eq:u1}\right] = 0
  \end{equation}
  Concerning~\eqref{eq:u2}, following~\cite[Proof of
  Lemma~6.9.5]{DafermosBook}, it is useful to recall the following
  relations, based on~\eqref{eq:13}:
  \begin{equation}
    \label{eq:14}
    \partial_\nu (\partial_j u_\epsilon)
    =
    \partial^2_{\nu\nu} u_\epsilon \; \nu_j + \O \partial_\nu u_\epsilon
    \quad \mbox{ and } \quad
    \Delta u_\epsilon
    =
    \partial^2_{\nu\nu} u_\epsilon + \O \partial_\nu u_\epsilon .
  \end{equation}
  Here and in what follows, by $\O$ we denote a constant dependent
  only on the geometry of $\Omega$. In particular, $\O$ is independent
  of the flow $f$, of the source $F$ and of the initial datum $u_o$.
  Then, using~\eqref{eq:14} and the boundedness of $\sigma_\eta (z)/z$
  and of $\sigma'_\eta$,
  \begin{align*}
    & \sigma'_\eta\left(\partial_j u_\epsilon (\tau,\xi)\right) \,
    \partial_\nu \left(\partial_j u_\epsilon (t,\xi)\right) -
    \frac{\sigma_\eta\left(\partial_j u_\epsilon (\tau,\xi)\right)}
    {\partial_j u_\epsilon (\tau,\xi)} \; \Delta u_\epsilon (\tau,\xi)
    \, \nu_j (\xi)
    \\
    = \ & \left(\sigma'_\eta\left(\partial_j u_\epsilon
        (\tau,\xi)\right) - \frac{\sigma_\eta\left(\partial_j
          u_\epsilon (\tau,\xi)\right)} {\partial_j u_\epsilon
        (\tau,\xi)} \right)
    \partial^2_{\nu\nu} u_\epsilon (\tau, \xi) \, \nu_j (\xi) + \O
    \, \partial_\nu u_\epsilon (\tau,\xi)\,,
  \end{align*}
  whence, by~\cite[Lemma~A.3]{BardosBrezisBrezis}, see
  also~\cite[Chapter~4]{GagneuxBook},
  \begin{align}
    \nonumber \lim_{\eta\to 0} \left[\eqref{eq:u2}\right] = \ & \O \,
    \epsilon \int_0^t \int_{\partial\Omega} \partial_\nu u_\epsilon
    (\tau,\xi) \d\xi \d\tau
    \\
    \nonumber \leq \ & \O \, \epsilon \int_0^t \int_{\partial\Omega}
    \norma{\grad u_\epsilon (\tau,\xi)} \d\xi \d\tau
    \\
    \nonumber \leq \ & \O \, \epsilon \int_0^t \int_\Omega
    \modulo{\Delta u_\epsilon (\tau,x)} \d{x} \d\tau
    \\
    \nonumber \leq \ & \O \int_0^t \int_\Omega \modulo{\partial_t
      u_\epsilon (\tau,x)} \d{x} \d\tau
    \\
    \label{eq:16}
    & + \O \, \norma{\partial_u f}_{\L\infty ([0,t]\times \Omega
      \times \mathcal{U}(t); \reali^n)} \int_0^t \int_\Omega
    \norma{\grad u_\epsilon (\tau,x)} \d{x} \d\tau
    \\
    \nonumber & + \O \, \mathcal{L}^n (\Omega) \, t \left( \norma{\div
        f}_{\L\infty ([0,t] \times \Omega \times \mathcal{U}
        (t);\reali)} + \norma{F}_{\L\infty ([0,t]\times\Omega \times
        \mathcal{U}(t);\reali)} \right) \,.
  \end{align}
  Passing to~\eqref{eq:u3}, we have the following estimate that holds
  uniformly in $\eta$:
  \begin{equation}
    \label{eq:15}
    \!\!\!\!
    \left[\eqref{eq:u3}\right]
    \leq
    \mathcal{H}^{n-1} (\partial\Omega) \, t
    \left(
      \norma{\div f (\cdot, \cdot, 0)}_{\L\infty ([0,t] \times \partial\Omega;\reali)}
      +
      \norma{F (\cdot, \cdot, 0)}_{\L\infty ([0,t]\times \partial\Omega;\reali)}
    \right) .
  \end{equation}
  Insert now~\eqref{eq:u4}, \eqref{eq:16} and~\eqref{eq:15}
  in~\eqref{eq:intEta}--\eqref{eq:u3} to obtain
  \begin{align*}
    & \int_\Omega \modulo{\partial_j u_\epsilon (t,x)} \d{x}
    \\
    \leq \ & \int_\Omega \modulo{\partial_j u_\epsilon (0,x)} \d{x}
    \\
    & + \mathcal{L}^n (\Omega) \, t \left( \norma{\grad \div
        f}_{\L\infty ([0,t] \times \Omega \times \mathcal{U} (t);
        \reali^n)} + \norma{\grad F}_{\L\infty ([0,t] \times \Omega
        \times \mathcal{U} (t); \reali^n)} \right)
    \\
    & + \O \, \mathcal{L}^n (\Omega) \, t \left( \norma{\div
        f}_{\L\infty ([0,t] \times \Omega \times \mathcal{U}
        (t);\reali)} + \norma{F}_{\L\infty ([0,t]\times\Omega \times
        \mathcal{U}(t);\reali)} \right)
    \\
    & + \mathcal{H}^{n-1} (\partial\Omega) \, t \left( \norma{\div f
        (\cdot, \cdot, 0)}_{\L\infty ([0,t]
        \times \partial\Omega;\reali)} + \norma{F (\cdot, \cdot,
        0)}_{\L\infty ([0,t]\times \partial\Omega;\reali)} \right)
    \\
    & + \left( \norma{\grad \partial_u f}_{\L\infty ([0,t] \times
        \Omega \times \mathcal{U} (t);\reali^{n\times n})} +
      \norma{\partial_u F}_{\L\infty ([0,t]\times\Omega \times
        \mathcal{U}(t);\reali)} \right) \int_0^t \norma{\partial_j
      u_\epsilon (\tau)}_{\L1 (\Omega;\reali)} \d\tau
    \\
    & + \O \int_0^t \norma{\partial_t u_\epsilon (\tau)}_{\L1
      (\Omega;\reali)} \d\tau
    \\
    & + \O \, \norma{\partial_u f}_{\L\infty ([0,t]\times \Omega
      \times \mathcal{U}(t); \reali^n)} \int_0^t \norma{\grad
      u_\epsilon (\tau)}_{\L1 (\Omega;\reali^n)} \d\tau.
  \end{align*}
  Summing over $j=1, \ldots, n$ and using the notation $\O$, we get
  \begin{eqnarray}
    \nonumber
    & &
    \norma{\grad u_\epsilon (t)}_{\L1 (\Omega;\reali^n)}
    \\
    \label{eq:17_i}
    & \leq &
    \sqrt{n} \, \norma{\grad u_\epsilon (0)}_{\L1 (\Omega;\reali^n)}
    \\
    \nonumber
    & &
    +
    \O \, t
    \Big[
    \norma{\grad \div f}_{\L\infty ([0,t] \times \Omega \times \mathcal{U} (t); \reali^n)}
    +
    \norma{\grad F}_{\L\infty ([0,t] \times \Omega \times \mathcal{U} (t); \reali^n)}
    \\
    \nonumber
    & &
    \qquad\qquad
    +
    \norma{\div f}_{\L\infty ([0,t] \times \Omega \times \mathcal{U}
      (t);\reali)}
    +
    \norma{F}_{\L\infty  ([0,t]\times\Omega \times \mathcal{U}(t);\reali)}
    \\
    \nonumber
    & &
    \qquad\qquad
    +
    \norma{\div f (\cdot, \cdot, 0)}_{\L\infty ([0,t] \times \partial\Omega;\reali)}
    +
    \norma{F (\cdot, \cdot, 0)}_{\L\infty ([0,t]\times \partial\Omega;\reali)}
    \Big]
    \\
    \nonumber
    & &
    +
    \O \int_0^t \norma{\grad u_\epsilon (\tau)}_{\L1 (\Omega;\reali^n)} \d\tau
    \\
    \nonumber
    & &
    \times
    \left[
      \norma{\grad \partial_u f}_{\L\infty ([0,t]   \times \Omega \times \mathcal{U} (t);\reali^{n\times n})}
      +
      \norma{\partial_u F}_{\L\infty ([0,t]\times\Omega \times
        \mathcal{U}(t);\reali)}
      +
      \norma{\partial_u f}_{\L\infty ([0,t]\times \Omega \times \mathcal{U}(t); \reali^n)}
    \right]
    \\
    \label{eq:17_ii}
    & &
    +
    \O \int_0^t \norma{\partial_t u_\epsilon (\tau)}_{\L1 (\Omega;\reali)} \d\tau.
  \end{eqnarray}
  Summing the inequalities~\eqref{eq:18}, \eqref{eq:19}
  and~\eqref{eq:17_i}--\eqref{eq:17_ii} we obtain the estimate
  \begin{eqnarray*}
    % & &
    \norma{\partial_t u_\epsilon (t)}_{\L1 (\Omega;\reali)}
    +
    \norma{\grad u_\epsilon (t)}_{\L1 (\Omega; \reali^n)}
    % \\
    \!\!\!& \leq &\!\!\!
    A_1
    +
    A_2 \, t
    +
    A_3 \norma{\grad u_o}_{\L1 (\Omega; \reali^n)}
    +
    \epsilon
    \norma{\Delta u_o}_{\L1 (\Omega; \reali)}
    \\
    \!\!\!& &\!\!\!
    \!\!\!
    +
    A_4
    \int_0^t
    \left[
      \norma{\partial_t u_\epsilon (\tau)}_{\L1(\Omega;\reali)}
      +
      \norma{\grad u_\epsilon (\tau)}_{\L1  (\Omega; \reali^n)}
    \right] \d\tau,
  \end{eqnarray*}
  where
  \begin{equation}
    \label{eq:20}
    \begin{array}{rcl}
      A_1 & = &
      \O \left(
        \norma{\div f}_{\L\infty ([0,t]\times \Omega \times \mathcal{U} (t); \reali)}
        +
        \norma{F}_{\L\infty ([0,t]\times \Omega \times \mathcal{U} (t); \reali)}
      \right)
      \\
      A_2 & = &
      \O
      \Big[
      \norma{\grad \div f}_{\L\infty ([0,t] \times \Omega \times \mathcal{U} (t); \reali^n)}
      +
      \norma{\grad F}_{\L\infty ([0,t] \times \Omega \times \mathcal{U} (t); \reali^n)}
      \\
      & &
      \qquad\qquad
      +
      \norma{\div f}_{\L\infty ([0,t] \times \Omega \times \mathcal{U}
        (t);\reali)}
      +
      \norma{F}_{\L\infty  ([0,t]\times\Omega \times \mathcal{U}(t);\reali)}
      \\
      & &
      \qquad\qquad
      +
      \norma{\div \partial_t f}_{\L\infty ([0,t] \times \Omega \times \mathcal{U}
        (t);\reali)}
      +
      \norma{\partial_t F}_{\L\infty  ([0,t]\times\Omega \times \mathcal{U}(t);\reali)}
      \\
      & &
      \qquad\qquad
      +
      \norma{\div f (\cdot, \cdot, 0)}_{\L\infty ([0,t] \times \partial\Omega;\reali)}
      +
      \norma{F (\cdot, \cdot, 0)}_{\L\infty ([0,t]\times \partial\Omega;\reali)}
      \Big]
      \\
      A_3 & = &
      \O
      +
      \norma{\partial_u f}_{\L\infty ([0,t]\times\Omega\times\mathcal{U} (t); \reali^n)}
      \\
      A_4 & = &
      \O
      \Big[
      1
      +
      \norma{\partial_t \partial_u f}_{\L\infty ([0,t]\times\Omega\times\mathcal{U} (t); \reali^n)}
      +
      \norma{\partial_u F}_{\L\infty ([0,t]\times\Omega\times\mathcal{U} (t); \reali)}
      \\
      & &
      +      \norma{\grad \partial_u f}_{\L\infty ([0,t]   \times \Omega \times \mathcal{U} (t);\reali^{n\times n})}
      % +
      % \norma{\partial_u F}_{\L\infty ([0,t]\times\Omega \times
      % \mathcal{U}(t);\reali)}
      +
      \norma{\partial_u f}_{\L\infty ([0,t]\times \Omega \times \mathcal{U}(t); \reali^n)}
      \Big] \, .
    \end{array}
  \end{equation}
  Note that the $A_i$ are increasing with $t$. Hence, an application
  of Gronwall Lemma yields
  \begin{eqnarray*}
    & &
    \norma{\partial_t u_\epsilon (t)}_{\L1 (\Omega;\reali)}
    +
    \norma{\grad u_\epsilon (t)}_{\L1 (\Omega; \reali^n)}
    \\
    & \leq &
    \left(
      A_1
      +
      A_2 \, t
      +
      A_3 \norma{\grad u_o}_{\L1 (\Omega; \reali^n)}
      +
      \epsilon
      \norma{\Delta u_o}_{\L1 (\Omega; \reali)}
    \right)
    e^{A_4 \, t}.
  \end{eqnarray*}
  From the inequality above, \eqref{eq:tvPar} follows easily,
  introducing the notation~\eqref{eq:30}. Noting that
  \begin{displaymath}
    \norma{u_\epsilon (t) - u_\epsilon (s)}_{\L1 (\Omega; \reali)} \leq
    \int_s^t \norma{\partial_t u_\epsilon (\tau)}_{\L1 (\Omega; \reali)},
  \end{displaymath}
  we obtain~\eqref{eq:dipTempoPar}, concluding the proof.
\end{proofof}

\section{Proofs Related to the Hyperbolic Problem}
\label{sec:PHP}

\begin{proofof}{Proposition~\ref{prop:limiteZero}}
  The family $u_\epsilon$ of solutions to~\eqref{eq:visco} as
  constructed in Lemma~\ref{lem:tvParZero} is uniformly bounded in
  $\L1 (I \times \Omega;\reali)$ by~\eqref{eq:Linfty}. It is also
  totally bounded in $\L1 (I \times \Omega;\reali)$ thanks
  to~\cite[Corollary~8]{HancheHolden}, which can be applied
  by~\eqref{eq:tvPar}.

  To prove that cluster point of the $u_\epsilon$ is a solution
  to~\eqref{eq:1} in the sense of Definition~\ref{def:sol}, we
  introduce $k \in \reali$ and a test function $\phi \in \Cc2
  (]-\infty, T[ \times \reali^n; \reali^+)$. We multiply
  equation~\eqref{eq:visco} by $\sigma'_\eta \left(u_\epsilon (t,x) -
    k\right) \phi (t,x)$, with $\eta > 0$ and $\sigma'_\eta$ as
  in~\eqref{eq:sigma}. Then, we integrate over $I \times \Omega$:
  \begin{equation}
    \label{eq:10}
    \begin{split}
      \!\int_I \!\!\int_\Omega \!\!\left(\partial_t u_\epsilon (t,x) +
        \Div f \left(t,x,u_\epsilon (t,x)\right)
        -F\left(t,x,u_\epsilon (t,x)\right)\right) \sigma'_\eta\!
      \left(u_\epsilon (t,x) - k\right) \phi (t,x) \d{x} \d{t} \\=
      \int_I\!\! \int_\Omega \epsilon \, \Delta u_\epsilon (t,x) \,\,
      \sigma'_\eta\! \left(u_\epsilon (t,x) - k\right) \phi (t,x)
      \d{x} \d{t}.
    \end{split}
  \end{equation}
  Consider each term in~\eqref{eq:10} separately. Integrate by part
  the first term:
  \begin{align}
    \nonumber & \int_I \!\!\int_\Omega
    \partial_t u_\epsilon (t,x) \,\, \sigma'_\eta\! \left(u_\epsilon
      (t,x) - k\right) \phi (t,x) \d{x} \d{t}
    \\
    \nonumber = \ & \int_I \!\!\int_\Omega \frac{\d{}}{\d{t}}
    \sigma_\eta\!  \left(u_\epsilon (t,x) - k\right) \, \phi (t,x)
    \d{x} \d{t}
    \\
    \label{eq:s1}
    = \ & - \int_\Omega \sigma_\eta\! \left(u_o (x) - k\right) \, \phi
    (0,x) \d{x} - \int_I \!\!\int_\Omega \sigma_\eta\!\left(u_\epsilon
      (t,x) - k\right) \,
    \partial_t \phi (t,x) \d{x} \d{t}.
  \end{align}
  Concerning the second term in the left hand side of~\eqref{eq:10},
  first integrate by part, then add and subtract $\displaystyle\int_I
  \int_\Omega f (t,x,k) \cdot \grad \left( \sigma'_\eta\!
    \left(u_\epsilon (t,x) - k\right) \phi (t,x) \right) \d{x}
  \d{t}$. After some rearrangements,
  \begin{align}
    \nonumber & \int_I \int_\Omega \Div f \left(t,x,u_\epsilon
      (t,x)\right) \,\, \sigma'_\eta\! \left(u_\epsilon (t,x) -
      k\right) \phi (t,x) \d{x} \d{t}
    \\
    \nonumber = & \int_I \int_{\partial\Omega} f \left(t,x,u_\epsilon
      (t,\xi)\right) \,\, \sigma'_\eta\!  \left(u_\epsilon (t,\xi) -
      k\right) \phi (t,\xi) \cdot \nu (\xi) \d{\xi} \d{t}
    \\
    \nonumber & - \int_I \int_\Omega f \left(t,x,u_\epsilon
      (t,x)\right) \cdot \grad \left( \sigma'_\eta\! \left(u_\epsilon
        (t,x) - k\right) \phi (t,x) \right) \d{x} \d{t}
    \\
    \nonumber = & \int_I \int_{\partial\Omega} f (t,x,0) \,\,
    \sigma'_\eta\!  \left(- k\right) \phi (t,\xi) \cdot \nu (\xi)
    \d{\xi} \d{t}
    \\
    \nonumber & - \int_I \int_\Omega \left(f \left(t,x,u_\epsilon
        (t,x)\right) - f (t,x,k) \right) \cdot \grad \left(
      \sigma'_\eta\!  \left(u_\epsilon (t,x) - k\right) \phi (t,x)
    \right) \d{x} \d{t}
    \\
    \nonumber & - \int_I \int_\Omega f (t,x,k) \cdot \grad \left(
      \sigma'_\eta\! \left(u_\epsilon (t,x) - k\right) \phi (t,x)
    \right) \d{x} \d{t}
    \\
    \label{eq:s2}
    \begin{split}
      = & \displaystyle \int_I \int_{\partial\Omega} f (t,x,0) \,\,
      \sigma'_\eta\! \left(- k\right) \phi (t,\xi) \cdot \nu (\xi)
      \d{\xi} \d{t}
      \\
      & \displaystyle - \int_I \int_\Omega \left(f
        \left(t,x,u_\epsilon (t,x)\right) - f (t,x,k) \right) \cdot
      \grad \left( \sigma'_\eta\!  \left(u_\epsilon (t,x) - k\right)
        \phi (t,x) \right) \d{x} \d{t}
      \\
      & \displaystyle - \int_I \int_{\partial\Omega} f (t,\xi,k) \,
      \sigma'_\eta \!  \left(- k\right) \phi (t,\xi) \cdot \nu (\xi)
      \d{\xi} \d{t}
      \\
      & \displaystyle + \int_I \int_\Omega \div f (t,x,k) \,\,
      \sigma'_\eta\!  \left(u_\epsilon (t,x) - k\right) \phi (t,x)
      \d{x} \d{t}.
    \end{split}
  \end{align}
  We do not modify the third term in the left hand side
  of~\eqref{eq:10}. Passing to the right hand side of~\eqref{eq:10},
  we have:
  \begin{align}
    \nonumber & \int_I \int_\Omega \epsilon \, \Delta u_\epsilon (t,x)
    \,\, \sigma'_\eta\! \left(u_\epsilon (t,x) - k\right) \phi (t,x)
    \d{x} \d{t}
    \\
    \nonumber = \ & \epsilon \int_I \int_{\partial\Omega} \grad
    u_\epsilon (t,\xi) \, \sigma'_\eta\! \left(- k \right) \phi
    (t,\xi) \cdot \nu (\xi) \d{\xi} \d{t}
    \\
    \nonumber & - \epsilon \int_I \int_\Omega \grad u_\epsilon (t,x)
    \cdot \grad\left( \sigma'_\eta\! \left(u_\epsilon (t,x) - k\right)
      \phi (t,x)\right) \d{x} \d{t}
    \\
    \label{eq:s3}
    \begin{split}
      = \ & \epsilon \int_I \int_{\partial\Omega} \sigma'_\eta\!
      \left(- k \right)\, \phi (t,\xi)\,\, \grad u_\epsilon (t,\xi)
      \cdot \nu (\xi) \d{\xi} \d{t}
      \\
      & - \epsilon \int_I \int_\Omega \sigma'_\eta\!  \left(u_\epsilon
        (t,x) - k\right) \grad u_\epsilon (t,x) \cdot \grad \phi (t,x)
      \d{x} \d{t}
      \\
      & - \epsilon \int_I \int_\Omega \norma{\grad u_\epsilon (t,x)}^2
      \, \sigma''_\eta\!  \left(u_\epsilon (t,x) - k\right) \phi (t,x)
      \d{x} \d{t}.
    \end{split}
  \end{align}
  Using~\eqref{eq:s1}, \eqref{eq:s2} and~\eqref{eq:s3},
  equation~\eqref{eq:10} becomes
  \begin{align}
    \label{eq:daQui}
    & \int_I \int_\Omega \sigma_\eta\!\left(u_\epsilon (t,x) -
      k\right) \,
    \partial_t \phi (t,x) \d{x} \d{t}
    \\
    \nonumber & + \int_I \int_\Omega \sigma'_\eta\!  \left(u_\epsilon
      (t,x) -k\right) \left(f \left(t,x,u_\epsilon (t,x)\right) - f
      (t,x,k) \right) \cdot \grad \phi (t,x) \d{x} \d{t}
    \\
    \nonumber & + \int_I \int_\Omega \sigma''_\eta\!  \left(u_\epsilon
      (t,x)- k\right) \left(f \left(t,x,u_\epsilon (t,x)\right) - f
      (t,x,k) \right) \cdot \grad u_\epsilon (t,x) \, \phi (t,x) \d{x}
    \d{t}
    \\
    \nonumber & + \int_I \int_\Omega \sigma'_\eta\!  \left(u_\epsilon
      (t,x) - k\right) \left( F \left(t,x,u_\epsilon (t,x)\right) -
      \div f (t,x,k) \right) \phi (t,x) \d{x} \d{t}
    \\
    \nonumber & + \int_\Omega \sigma_\eta\! \left(u_o (x) - k\right)
    \, \phi (0,x) \d{x}
    \\
    \nonumber & - \int_I \int_{\partial\Omega} \sigma'_\eta\!  \left(-
      k\right) \left(f(t,\xi,0) - f (t,\xi,k)\right)\phi (t,\xi) \cdot
    \nu (\xi) \d{\xi} \d{t}
    \\
    \nonumber = \ & \epsilon \int_I \int_\Omega \sigma'_\eta\!
    \left(u_\epsilon (t,x) - k\right) \grad u_\epsilon (t,x) \cdot
    \grad \phi (t,x) \d{x} \d{t}
    \\
    \nonumber & + \epsilon \int_I \int_\Omega \norma{\grad u_\epsilon
      (t,x)}^2 \, \sigma''_\eta\!  \left(u_\epsilon (t,x) - k\right)
    \phi (t,x) \d{x} \d{t}
    \\
    \label{eq:aQui}
    & - \epsilon \int_I \int_{\partial\Omega} \sigma'_\eta\!  \left(-
      k \right)\, \phi (t,\xi)\,\, \grad u_\epsilon (t,\xi) \cdot \nu
    (\xi) \d{\xi} \d{t} \,.
  \end{align}
  Choose now any sequence $\epsilon_m$, with $m \in \naturali$, and
  call $u_\infty$ the $\L1$ limit of a convergent subsequence. For the
  sake of readability, we write $u_\epsilon$ instead of
  $u_{\epsilon_m}$. The left hand side
  of~\eqref{eq:daQui}--\eqref{eq:aQui} converges to the same
  expression with $u_\epsilon$ replaced by $u_\infty$. The first term
  in the right hand side can be treated as follows:
  \begin{align*}
    & \epsilon_m \int_I\!\! \int_\Omega \sigma'_\eta \left(u_\epsilon
      (t,x) - k\right) \grad u_\epsilon (t,x) \cdot \grad \phi (t,x)
    \d{x} \d{t}
    \\
    \geq \ & - \modulo{ \epsilon_m \int_I\!\! \int_\Omega \sigma'_\eta
      \left(u_\epsilon (t,x) - k\right) \grad u_\epsilon (t,x) \cdot
      \grad \phi (t,x) \d{x} \d{t}}
    \\
    \geq \ & - \epsilon_m \, \norma{\grad \phi}_{\L\infty (I\times
      \Omega; \reali)} \, \norma{\grad u_\epsilon}_{\L1 (I\times
      \Omega; \reali^n)}
    \\
    \stackrel{m \to +\infty}{=} & 0\,,
  \end{align*}
  since $\epsilon_m$ is a multiplicative coefficient in the
  estimate~\eqref{eq:tvPar} of $\norma{\grad u_\epsilon}_{\L1 (I\times
    \Omega; \reali^n)}$, see~\eqref{eq:30}.

  The second term in the right hand side
  of~\eqref{eq:daQui}--\eqref{eq:aQui} is non negative.

  To compute the limit as $m \to +\infty$ of the third term in the
  right hand side of~\eqref{eq:daQui}--\eqref{eq:aQui}, introduce a
  function $\Phi_h \in \Cc2 (\reali^n; [0,1])$ with the following
  properties:
  \begin{equation}
    \label{eq:11}
    \begin{array}{cc}
      \begin{array}[b]{l}
        \Phi_h (\xi) = 1 \mbox{ for all } \xi \in \partial\Omega,
        \\
        \\
        \Phi_h (x) = 0 \mbox{ for all } x \in \Omega
        \mbox{ such that } B (x,h) \subseteq \Omega,
        \\
        \\
        \norma{\nabla \Phi_h}_{\L\infty (\Omega;\reali^n)} \leq 1/h.
        \\
        \\
        \\
      \end{array}
      &
      \includegraphics[width=0.33\textwidth]{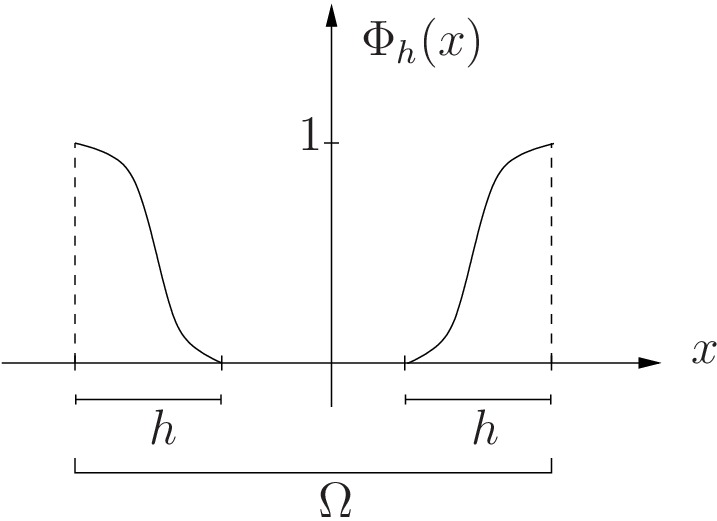}
    \end{array}
  \end{equation}
  Then, using equation~\eqref{eq:visco} and integration by parts,
  except for the constant $\sigma'_\eta (-k)$, the considered term
  becomes
  \begin{align}
    \nonumber & \epsilon_m \int_I \int_{\partial\Omega} \phi (t,\xi)
    \, \grad u_\epsilon (t,\xi) \cdot \nu (\xi) \d{\xi} \d{t}
    \\
    \nonumber = \ & \epsilon_m \int_I \int_{\partial\Omega} \phi
    (t,\xi)\, \Phi_h (\xi) \, \grad u_\epsilon (t,\xi) \cdot \nu (\xi)
    \d{\xi} \d{t}
    \\
    \nonumber = \ & \epsilon_m \int_I \int_{\Omega} \left( \Delta
      u_\epsilon (t,x) \, \phi (t,x)\, \Phi_h (x) + \grad u_\epsilon
      (t,x) \cdot \grad\left( \phi (t,x)\, \Phi_h (x) \right) \right)
    \d{x} \d{t}
    \\
    \nonumber =\ & \int_I \int_{\Omega} \left(
      \partial_t u_\epsilon + \Div f \left(t,x,u_\epsilon (t,x)\right)
      - F \left(t,x,u_\epsilon (t,x)\right) \right) \phi (t,x)\,
    \Phi_h (x) \d{x} \d{t}
    \\
    \nonumber & + \epsilon_m \int_I \int_{\Omega} \grad u_\epsilon
    (t,x) \cdot \grad \left( \phi (t,x)\, \Phi_h (x) \right) \d{x}
    \d{t}
    \\
    \nonumber = \ & \int_\Omega u_o (x) \, \phi (0,x) \, \Phi_h (x)
    \d{x} - \int_I \int_{\Omega} u_\epsilon (t,x) \, \partial_t \phi
    (t,x) \, \Phi_h (x) \d{x} \d{t}
    \\
    \nonumber & + \int_I \int_{\partial\Omega} f
    \left(t,\xi,u_\epsilon (t,\xi)\right) \,\phi (t,\xi) \, \Phi_h
    (\xi) \cdot \nu (\xi) \d\xi \d{t}
    \\
    \nonumber & - \int_I \int_{\Omega} f \left(t,x,u_\epsilon
      (t,x)\right) \cdot \grad\left(\phi (t,x)\,\Phi_h (x)\right)
    \d{x}\d{t}
    \\
    \nonumber & - \int_I \int_{\Omega} F \left(t,x,u_\epsilon
      (t,x)\right) \phi (t,x)\, \Phi_h (x) \d{x} \d{t}
    \\
    \nonumber & + \epsilon_m \int_I \int_{\Omega} \grad u_\epsilon
    (t,x) \cdot \grad \left(\phi (t,x)\, \Phi_h (x) \right) \d{x}
    \d{t} \\ \label{eq:25}
    \begin{split}
      = \ & \int_\Omega u_o (x) \, \phi (0,x) \, \Phi_h (x) \d{x}
      \\
      & - \int_I \int_{\Omega} \bigl( u_\epsilon (t,x) \, \partial_t
      \phi (t,x) + f \left(t,x,u_\epsilon (t,x)\right) \cdot \grad
      \phi (t,x)
      \\
      &\qquad + F \left(t,x,u_\epsilon (t,x)\right) \phi (t,x) -
      \epsilon_m \grad u_\epsilon (t,x) \cdot \grad \phi (t,x) \bigr)
      \Phi_h (x) \d{x} \d{t}
      \\
      & - \int_I \int_{\Omega} \left( f \left(t,x,u_\epsilon
          (t,x)\right) - \epsilon_m \grad u_\epsilon (t,x) \right)
      \cdot \grad \Phi_h (x) \, \phi (t,x) \d{x} \d{t}
      \\
      & + \int_I \int_{\partial\Omega} f (t,\xi,0) \,\phi (t,\xi) \,
      \Phi_h (\xi) \cdot \nu (\xi) \d\xi \d{t}.
    \end{split}
  \end{align}
  Let $m \to +\infty$:
  \begin{align}
    \nonumber & \lim_{m \to +\infty} \left[\eqref{eq:25}\right]
    \\
    \label{eq:26}
    \begin{split}
      = \ & - \int_I \int_{\Omega} \bigl( u_\infty (t,x) \, \partial_t
      \phi (t,x) + f \left(t,x,u_\infty (t,x)\right) \cdot \grad \phi
      (t,x)
      \\
      & \qquad\qquad\qquad\qquad\qquad + F \left(t,x,u_\infty
        (t,x)\right) \phi (t,x) \bigr) \Phi_h (x) \d{x} \d{t}
      \\
      & - \int_I \int_{\Omega} f \left(t,x,u_\infty (t,x)\right) \cdot
      \grad \Phi_h (x) \, \phi (t,x) \d{x} \d{t}
      \\
      & + \int_\Omega u_o (x) \, \phi (0,x) \, \Phi_h (x) \d{x} +
      \int_I \int_{\partial\Omega} f (t,\xi,0) \,\phi (t,\xi) \cdot
      \nu (\xi) \d\xi \d{t}.
    \end{split}
  \end{align}
  Now let $h \to 0$. Thanks to Lemma~\ref{lem:uffaMultiD} and
  Lemma~\ref{lem:tr3}, we obtain
  \begin{displaymath}
    \lim_{h \to 0} \left[\eqref{eq:26}\right] =
    \int_I \int_{\partial \Omega}
    \left(
      f (t,\xi,0) -  f \left(t,\xi,\tr u_\infty (t,\xi) \right)
    \right)
    \phi (t,\xi)  \cdot \nu (\xi) \d{\xi} \d{t}.
  \end{displaymath}
  Hence
  \begin{equation}
    \label{eq:27}
    \lim_{m \to +\infty} \left[\eqref{eq:aQui}\right] =
    -
    \int_I \int_{\partial \Omega}
    \sigma'_\eta ( - k)
    \left(
      f (t,\xi,0) -  f \left(t,\xi,\tr u_\infty (t,\xi) \right)
    \right)
    \phi (t,\xi)  \cdot \nu (\xi) \d{\xi} \d{t}.
  \end{equation}
  Therefore, in the limit $m \to +\infty$, we obtain that the
  equality~\eqref{eq:daQui}--\eqref{eq:aQui} implies the inequality
  \begin{eqnarray*}
    & &
    \int_I \int_\Omega \sigma_\eta\!\left(u_\infty (t,x) -
      k\right) \,
    \partial_t \phi (t,x) \d{x} \d{t}
    \\
    \nonumber
    & &
    +
    \int_I \int_\Omega \sigma'_\eta\!
    \left(u_\infty (t,x) -k\right) \left(f \left(t,x,u_\infty (t,x)\right) -
      f (t,x,k) \right) \cdot \grad \phi (t,x) \d{x} \d{t}
    \\
    \nonumber
    & &
    +
    \int_I \int_\Omega \sigma''_\eta\!
    \left(u_\infty (t,x)- k\right) \left(f \left(t,x,u_\infty (t,x)\right) -
      f (t,x,k) \right) \cdot \grad u_\infty (t,x) \, \phi (t,x)
    \d{x} \d{t}
    \\
    \nonumber
    & &
    +
    \int_I \int_\Omega \sigma'_\eta\!
    \left(u_\infty (t,x) - k\right) \left( F \left(t,x,u_\infty (t,x)\right)
      - \div f (t,x,k) \right) \phi (t,x) \d{x} \d{t}
    \\
    \nonumber
    & &
    +
    \int_\Omega \sigma_\eta\! \left(u_o (x) -
      k\right) \, \phi (0,x) \d{x}
    \\
    \nonumber
    & &
    -
    \int_I  \int_{\partial\Omega} \sigma'_\eta\!
    \left(- k\right) \left(f(t,\xi,0) - f (t,\xi,k)\right)\phi (t,\xi)
    \cdot \nu (\xi) \d{\xi} \d{t}
    \\
    & \geq &
    \int_I \int_{\partial \Omega}
    \sigma'_\eta ( - k)
    \left(
      f (t,\xi,0) -  f \left(t,\xi,\tr u_\infty (t,\xi) \right)
    \right)
    \phi (t,\xi)  \cdot \nu (\xi) \d{\xi} \d{t} \,.
  \end{eqnarray*}
  Let now $\eta \to 0$. Thanks to~\cite[Lemma 2]{BardosLerouxNedelec},
  to the choice~\eqref{eq:sigma} of $\sigma_\eta$ and of its
  derivative, we get
  \begin{eqnarray*}
    & &
    \int_I \int_\Omega \modulo{u_\infty (t,x) - k} \,
    \partial_t \phi (t,x) \d{x} \d{t}
    \\
    & &
    +
    \int_I \int_\Omega \sgn\left(u_\infty (t,x)
      -k\right) \left(f \left(t,x,u_\infty (t,x)\right) - f (t,x,k) \right)
    \cdot \grad \phi (t,x) \d{x} \d{t}
    \\
    & &
    +
    \int_I \int_\Omega \sgn \left(u_\infty (t,x) -
      k\right) \left( F \left(t,x,u_\infty (t,x)\right) - \div f (t,x,k)
    \right) \phi (t,x) \d{x} \d{t}
    \\
    & &
    + \int_\Omega \modulo{u_o (x) - k} \, \phi (0,x)
    \d{x}
    \\
    & &
    - \int_I  \int_{\partial\Omega} \sgn \left(- k\right)
    \left(f\left(t,\xi,\tr u_\infty (t, \xi)\right) - f (t,\xi,k)\right)\phi (t,\xi) \cdot \nu
    (\xi) \d{\xi} \d{t}
    \\
    & \geq &
    0 \,,
  \end{eqnarray*}
  that is~\eqref{eq:4} in the case $u_b = 0$. Hence $u_\infty$ is a
  solution to~\eqref{eq:1} in the sense of Definition~\ref{def:sol}.

  As a consequence of~\eqref{eq:Linfty} in Lemma~\ref{lem:stimaLinf},
  $u_\infty$ satisfies the $\L\infty$ estimate
  \begin{displaymath}
    \norma{u_\infty}_{\L\infty ([0,t]\times \Omega; \reali)}
    \leq
    \norma{u_o}_{\L\infty (\Omega; \reali)} e^{c_1 \, t}
    + \frac{c_2}{c_1}\left(e^{c_1 \, t} - 1 \right) \,,
  \end{displaymath}
  where $c_1, \, c_2$ are defined in~\eqref{eq:c1c2}. Thanks to the
  lower semicontinuity in $\L1$ of the total variation,
  see~\cite[Remark~3.5]{AmbrosioFuscoPallara}, the
  bound~\eqref{eq:tvPar} in Lemma~\ref{lem:tvParZero} gives
  \begin{displaymath}
    \tv \left(u_\infty (t)\right)
    \leq
    \liminf_{\epsilon \to 0} \tv \left(u_\epsilon (t)\right)
    \leq
    \liminf_{\epsilon \to 0} \mathcal{L}_\epsilon (t)
    =
    \mathcal{L} (t)
  \end{displaymath}
  with $\mathcal{L}_\epsilon (t)$ and $\mathcal{L} (t)$ as defined
  in~\eqref{eq:30} and~\eqref{eq:21}.

  From~\eqref{eq:dipTempoPar} in Lemma~\ref{lem:tvParZero}, we have
  for $t, \, s \in I$
  \begin{eqnarray*}
    \norma{u_\infty (t) - u_\infty (s)}_{\L1 (\Omega;\reali)}
    & = &
    \lim_{\epsilon \to 0}
    \norma{u_\epsilon (t) - u_\epsilon (s)}_{\L1 (\Omega;\reali)}
    \\
    & \leq &
    \lim_{\epsilon \to 0} \mathcal{L}_\epsilon (\max\{t,s\}) \; \modulo{t-s}
    \\
    & = &
    \mathcal{L} (\max\{t,s\}) \; \modulo{t-s} \,,
  \end{eqnarray*}
  concluding the proof.
\end{proofof}

\smallskip The following Lemma will be of use in the proof of
Theorem~\ref{thm:estConComp}.
\begin{lemma}
  \label{lem:elliptic}
  Let $k \in \naturali$ with $k \geq 2$, $\Omega$
  satisfy~{\bf($\boldsymbol{\Omega_{k,\alpha}}$)} and fix $\psi \in
  \C{k,\alpha} (I \times \partial\Omega; \reali)$. Then, the elliptic
  problem
  \begin{equation}
    \label{eq:24}
    \left\{
      \begin{array}{lr@{\;}c@{\;}l}
        \Delta z (t,x) = 0
        & (t,x) & \in & I \times \Omega
        \\
        z (t, \xi) = \psi (t,\xi)
        & (t,\xi) & \in & I \times \partial\Omega
      \end{array}
    \right.
  \end{equation}
  admits a unique solution $z \in \C{k,\alpha} (I \times \bar \Omega;
  \reali)$. Moreover,
  \begin{eqnarray}
    \label{eq:el1}
    \norma{z}_{\L\infty ([0,t]\times\Omega;\reali)}
    & \leq &
    \norma{\psi}_{\L\infty ([0,t]\times\partial\Omega;\reali)} \,,
    \\
    \label{eq:e11emezzo}
    \norma{\grad z}_{\L\infty ([0,t]\times\Omega;\reali^n)}
    & \leq &
    \norma{\psi}_{\C{2,\alpha} ([0,t]\times\partial\Omega;\reali)} \,,
    \\
    \label{eq:el2}
    (k\geq 3) \qquad\quad
    \norma{\partial_t z}_{\L\infty ([0,t]\times\Omega;\reali)}
    & \leq &
    \norma{\partial_t \psi}_{\L\infty ([0,t]\times\partial\Omega;\reali)} \,,
    \\
    \label{eq:el3}
    (k\geq 3) \qquad
    \norma{D z}_{\W1\infty ([0,t]\times\Omega;\reali)}
    & \leq &
    \O \, \norma{\psi}_{\C{3,\alpha} (\partial\Omega;\reali)} \,.
  \end{eqnarray}
\end{lemma}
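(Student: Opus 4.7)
The key observation is that time $t$ enters \eqref{eq:24} only as a parameter: no time derivative appears in the equation. My plan is therefore to solve \eqref{eq:24} pointwise in $t$ by classical Schauder theory, transfer the time regularity of $\psi$ to $z$ by differentiating the elliptic problem in $t$, and finally read off the four estimates from the maximum principle and from boundary Schauder estimates.

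For each fixed $t \in I$, problem \eqref{eq:24} is the classical Dirichlet problem for the Laplace equation in $\Omega$ with boundary datum $\psi(t,\cdot) \in \C{k,\alpha}(\partial\Omega;\reali)$. Under hypothesis $(\boldsymbol{\Omega_{k,\alpha}})$, Schauder theory, as for instance in~\cite[Chapter~3, \S~1, Theorem~1.3]{LadysenkajaUralcevaElliptic}, yields a unique solution $z(t,\cdot) \in \C{k,\alpha}(\bar\Omega;\reali)$ together with the quantitative bound $\norma{z(t,\cdot)}_{\C{k,\alpha}(\bar\Omega;\reali)} \leq \O\,\norma{\psi(t,\cdot)}_{\C{k,\alpha}(\partial\Omega;\reali)}$. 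To obtain joint regularity in $(t,x)$, I would argue by difference quotients in $t$: the quotient $\bigl(z(t+\tau,\cdot) - z(t,\cdot)\bigr)/\tau$ is harmonic in $\Omega$ with boundary datum $\bigl(\psi(t+\tau,\cdot) - \psi(t,\cdot)\bigr)/\tau$, and by linearity together with continuity of the solution operator from $\C{k,\alpha}(\partial\Omega;\reali)$ to $\C{k,\alpha}(\bar\Omega;\reali)$ one may pass to the limit $\tau \to 0$ to see that $\partial_t z$ exists, is harmonic in $x$, and has boundary datum $\partial_t \psi$. Iterating and treating mixed space--time derivatives analogously produces $z \in \C{k,\alpha}(I \times \bar\Omega;\reali)$.

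With joint regularity in hand, the four estimates are direct applications of classical facts to a suitable harmonic function. Estimate~\eqref{eq:el1} is immediate from the weak maximum principle applied at each fixed $t$. Estimate~\eqref{eq:el2} is the same maximum principle applied to $\partial_t z$, which is harmonic with boundary values $\partial_t \psi$; the assumption $k \geq 3$ guarantees the regularity of $\partial_t z$ needed to justify this step. Estimate~\eqref{eq:e11emezzo} follows from a boundary Schauder gradient estimate for harmonic functions, controlling $\norma{\grad z(t,\cdot)}_{\L\infty(\bar\Omega;\reali^n)}$ by $\O\,\norma{\psi(t,\cdot)}_{\C{2,\alpha}(\partial\Omega;\reali)}$. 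For~\eqref{eq:el3} we must bound both $\norma{D^2 z}_{\L\infty}$ and $\norma{\partial_t D z}_{\L\infty} = \norma{D\,\partial_t z}_{\L\infty}$: the former is a $\C{2,\alpha}$ Schauder estimate for $z(t,\cdot)$ with boundary datum in $\C{3,\alpha}$, and the latter is the same estimate applied to the harmonic function $\partial_t z(t,\cdot)$ with boundary datum $\partial_t \psi \in \C{2,\alpha}$.

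The main obstacle is not any single calculation but the careful bookkeeping of Hölder exponents and the matching of each estimate to the correct classical reference. In particular, for~\eqref{eq:el3} one must track the trade-off between spatial and temporal derivatives so as to bound $\partial_t D z$ in $\L\infty$ using only $\psi \in \C{3,\alpha}$, which is exactly the regularity ultimately imposed on the boundary datum in Theorem~\ref{thm:Dream}.
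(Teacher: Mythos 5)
Your proposal is correct and follows essentially the same route as the paper: solve the Dirichlet problem at each fixed $t$ by classical Schauder theory (the paper cites Friedman's elliptic existence theorem instead of Ladyzhenskaya--Uraltseva, an immaterial difference), identify $\partial_t z$ through difference quotients in $t$ as the harmonic function with boundary datum $\partial_t\psi$, and read the four bounds off the maximum principle and boundary Schauder estimates. One small caution: the passage to the limit $\tau\to 0$ should be justified by continuity of the solution operator in the sup norm (i.e.\ the maximum principle), since the difference quotients of $\psi$ need not converge in $\C{k,\alpha}(\partial\Omega;\reali)$; this is precisely how the paper argues.
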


\begin{proof}
  We verify that the assumptions of~\cite[Chapter~3, \S~8,
  Theorem~20]{FriedmanParabolic}, in the case $p=k+2$, hold.  With
  reference to the notation of~\cite[Chapter~3, \S~8,
  Theorem~20]{FriedmanParabolic}, for any $t \in I$ and for $i=0,
  \ldots, k$, consider the problem
  \begin{equation}
    \label{eq:22}
    \left\{
      \begin{array}{l}
        L z_i (t) = \f \mbox{ in } \Omega
        \\
        z_i (t) = \partial_t^i\psi (t) \mbox{ in } \partial\Omega
      \end{array}
    \right.
    \quad \mbox{ where } \quad
    \begin{array}{l}
      L = \Delta \mbox{ is an elliptic operator,}
      \\
      \f = 0 \mbox{ is of class } \C{k-2,\alpha},
      \\
      \partial\Omega \mbox{ is of class } \C{k,\alpha},
      \\
      \partial_t^i \psi (t) \mbox{ is of class } \C{k,\alpha} \mbox{ in }x.
    \end{array}
  \end{equation}
  Therefore, for any $t \in I$, \eqref{eq:24} admits a solution $z_i
  (t) \in \C{k,\alpha} (\bar\Omega; \reali)$.

  Thanks to the form of $L$ in~\eqref{eq:22} and to the continuity of
  $\partial_t^i \psi$, \cite[Chapter~2, \S~7,
  Theorem~20]{FriedmanParabolic} can be applied, ensuring the
  uniqueness of the solution to~\eqref{eq:24}.

  Concerning the regularity in $t$, remark that $\partial_t^i \psi$ is
  of class $\C{k-1,\alpha}$ in $t$. Hence, for $i=0, \ldots, k$, by
  the Maximum Principle~\cite[Chapter~2, \S~7,
  Theorem~19]{FriedmanParabolic} for any $x \in \Omega$, $t \in I$,
  $h$ sufficiently small such that $t+h \in I$, considering separately
  the cases $i < k$ and $i = k$,
  \begin{displaymath}
    \begin{array}{@{}l@{\qquad}r@{\,}c@{\,}l@{}}
      i<k: &
      \displaystyle
      \modulo{\frac{z_i (t+h,x) - z_i (t,x)}{h} - z_{i+1} (t,x)}
      & \leq &
      \displaystyle
      \sup_{\xi \in \partial\Omega}
      \modulo{
        \frac{\partial_t^i\psi (t+h,\xi)-\partial_t^i \psi(t,\xi)}{h}
        -
        \partial_t^{i+1}\psi (t,h)}
      \\
      & & \leq &
      \sup_{\xi \in \partial\Omega}
      \modulo{
        \partial_t^{i+1}\psi (t + \theta h, \xi)
        -
        \partial_t^{i+1}\psi (t,\xi)}
      \\
      & & \leq &
      \left\{
        \begin{array}{lr@{\,}c@{\,}l}
          \sup_{\xi\in \partial\Omega} \modulo{\partial_t^{i+2}\psi} \, h
          & i+1 & < &k \,,
          \\
          C \, h^\alpha & i+1 & = & k\,,
        \end{array}
      \right.
      \\
      i=k: &
      \modulo{z_k (t+h,x) - z_k (t,x)}
      & \leq &
      \displaystyle
      \sup_{\xi \in \partial \Omega}
      \modulo{
        \partial_t^k \psi (t+h, \xi)
        -
        \partial_t^k \psi (t,\xi)}
      \leq
      C \, h^\alpha \,.
    \end{array}
  \end{displaymath}
  where $C$ is the H\"older constant of $\partial_t^k \psi$. Hence, $z
  = z_0$ is of class $\C{k,ìa}$ in both $t$ and $x$.

  Concerning the bounds on $z$ and on its derivatives, note
  that~\eqref{eq:el1} immediately follow from the Maximum
  Principle~\cite[Formula~(7.5)]{FriedmanParabolic}. The same result
  applies also to $\partial_t z = \Delta \partial_t z$,
  yielding~\eqref{eq:el2}, whenever $k\geq 3$. The Boundary Schauder
  Estimate~\cite[Chapter~3, p.86]{FriedmanParabolic} provides the
  bound for $\grad z$, $\grad^2 z$, $\grad \partial_t z$ and
  $\partial^2_{tt} z$, proving~\eqref{eq:e11emezzo}
  and~\eqref{eq:el3}.
\end{proof}

We recall the following result from~\cite{Kruzkov}, to be used in the
proof below.

\begin{lemma}[{\cite[Lemma~2]{Kruzkov}}]
  \label{lem:kru2}
  Fix positive $r$ and choose $\rho \in [0, \min\{r,T\}]$. Let $w \in
  \L\infty (I \times B (0,r); \reali)$. For $h \in \left]0,
    \rho\right[$, define
  \begin{displaymath}
    A_h = \left\{
      (t,X,s,Y) \in \left(I \times \reali^N\right)^2
      \colon
      \begin{array}{r@{\;}c@{\;}l@{\qquad}r@{\;}c@{\;}l}
        \modulo{t-s} & \leq & h,\,
        &
        (t+s)/2 & \in & [\rho, T-\rho] ,\,
        \\
        \norma{X-Y} & \leq & h,\,
        &
        \norma{X+Y}/2 & \in & [0, r-\rho]
      \end{array}
    \right\}\,.
  \end{displaymath}
  Then, $\displaystyle \lim_{h\to 0+} \frac{1}{h^{1+N}} \int_{A_h}
  \modulo{w (t,X) - w (s,Y)} \d{t} \d{X} \d{s} \d{Y} = 0$.
\end{lemma}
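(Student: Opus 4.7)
The plan is to combine continuity of translations in $\L1$ with an explicit estimate of the measure of $A_h$ and of its fibers. First, I would pass to sum/difference coordinates $\sigma = (t+s)/2$, $\tau = t-s$, $Q = (X+Y)/2$, $P = X-Y$, whose joint Jacobian is $1$. In these coordinates $A_h$ is exactly the product $[\rho, T-\rho] \times B(0, r-\rho) \times [-h,h] \times B(0,h)$, whose Lebesgue measure is $\O \cdot h^{1+N}$. From the same parametrization, the slice at any fixed $(t,X)$, namely $\{(s,Y) \colon (t,X,s,Y) \in A_h\}$, satisfies $\modulo{t-s}\leq h$ and $\norma{X-Y} \leq h$, so its measure is at most $2h \cdot \modulo{B(0,h)} = \O \cdot h^{1+N}$, uniformly in $(t,X)$. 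One also verifies that $h < \rho$ forces both $(t,X)$ and $(s,Y)$ to lie in $I \times B(0,r)$, so that $w$ is defined at both points.

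Next I would fix $\varepsilon > 0$ and pick $w_\varepsilon \in \C0 (I \times \bar{B(0,r)}; \reali)$ with $\norma{w - w_\varepsilon}_{\L1 (I \times B(0,r); \reali)} < \varepsilon$; this is possible because $w$, being $\L\infty$ on a bounded set, belongs to $\L1$ and continuous functions are dense. Then, via the triangle inequality,
\begin{displaymath}
\modulo{w(t,X) - w(s,Y)} \leq \modulo{w(t,X) - w_\varepsilon(t,X)} + \modulo{w_\varepsilon(t,X) - w_\varepsilon(s,Y)} + \modulo{w_\varepsilon(s,Y) - w(s,Y)},
\end{displaymath}
and I would integrate each of the three terms over $A_h$.

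For the middle term, $w_\varepsilon$ is uniformly continuous on the compact set $I \times \bar{B(0,r)}$; since $(t,X,s,Y) \in A_h$ enforces $\modulo{t-s} \leq h$ and $\norma{X-Y} \leq h$, the quantity $\sup_{A_h} \modulo{w_\varepsilon(t,X) - w_\varepsilon(s,Y)}$ tends to $0$ as $h \to 0+$. Multiplying by $\modulo{A_h} = \O \cdot h^{1+N}$ and dividing by $h^{1+N}$ yields a vanishing contribution. For the two outer terms, Fubini together with the slice estimate gives
\begin{displaymath}
\int_{A_h} \modulo{w(t,X) - w_\varepsilon(t,X)} \d{t}\d{X}\d{s}\d{Y} \leq \O \cdot h^{1+N} \norma{w - w_\varepsilon}_{\L1 (I \times B(0,r); \reali)} < \O \cdot h^{1+N}\varepsilon,
\end{displaymath}
with the third term handled symmetrically. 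Dividing by $h^{1+N}$ and taking $\limsup_{h \to 0+}$ leaves a bound of $\O \cdot \varepsilon$; since $\varepsilon > 0$ is arbitrary, the limit vanishes.

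The main obstacle will be the uniform slice-measure estimate: checking cleanly that, for any fixed $(t,X)$ in the relevant region, the set of admissible $(s,Y)$ is genuinely controlled by a constant (independent of $(t,X)$) times $h \cdot h^N$, and that the centre constraints $(t+s)/2 \in [\rho, T-\rho]$ and $\norma{X+Y}/2 \leq r-\rho$ can only shrink these slices. Once that bookkeeping is in place, the rest is the standard density / uniform-continuity splitting that underlies continuity of translation in $\L1$.
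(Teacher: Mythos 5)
Your argument is correct. Note that the paper itself offers no proof of this statement: it is recalled verbatim from Kru\v zkov's Lemma~2 and used as a black box, so there is nothing internal to compare against. Your route --- pass to the coordinates $\left((t+s)/2,\,t-s,\,(X+Y)/2,\,X-Y\right)$ with unit Jacobian to see that $\modulo{A_h}=\mathcal{O}(h^{1+N})$ and that each $(t,X)$-slice (and, symmetrically, each $(s,Y)$-slice) has measure at most $2h\,\modulo{B(0,h)}$ uniformly, then split $\modulo{w(t,X)-w(s,Y)}$ against a continuous approximant and use uniform continuity for the middle term and Fubini plus the slice bound for the outer terms --- is the standard ``continuity of translations in $\L1$'' proof and is complete: the check that $h<\rho$ keeps both points inside $I\times B(0,r)$, and that the centre constraints only shrink the slices, is exactly the bookkeeping needed. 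Kru\v zkov's original proof instead invokes the Lebesgue differentiation theorem (a.e.\ point of a summable function is a Lebesgue point) together with dominated convergence; the two arguments are essentially equivalent in depth, since the Lebesgue point theorem is itself usually proved by the same density-of-continuous-functions device, so your version is, if anything, more self-contained, at the modest price of writing out the measure estimates explicitly.
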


\begin{proofof}{Theorem~\ref{thm:estConComp}}
  Define $z$ as the solution to~\eqref{eq:24} with $\psi (t,\xi) = u_b
  (t,\xi)$. Lemma~\ref{lem:elliptic} applies, ensuring the existence
  and uniqueness of a solution $z$ of class $\C{3,\alpha}$. Note that
  $z (0,x) = 0$ for all $x \in \bar\Omega$. For all $\check k \in
  \reali$ and for all $\check\phi \in \Cc2 (\left]-\infty,
    T\right[\times \reali^n; \reali^+)$ the following equality holds
  \begin{equation}
    \label{eq:solZ}
    \begin{array}{l}
      \displaystyle
      \int_I \int_\Omega
      \left(
        \modulo{z (s,y) - \check k} \, \partial_s \check\phi (s,y)
        +
        \sgn \left(z (s,y) - \check k\right) \, \partial_s z (s,y) \,
        \check\phi (s,y)
      \right)
      \d{y} \, \d{s}
      \\
      \displaystyle
      +
      \int_\Omega \modulo{\check k} \, \check\phi (0, y) \d{y} = 0\, .
    \end{array}
  \end{equation}

  We now apply Proposition~\ref{prop:limiteZero} to the problem
  \begin{equation}
    \label{eq:23}
    \left\{
      \begin{array}{@{}l}
        \partial_t v + \Div g (t,x,v) = G (t,x,v)
        \\
        \begin{array}{@{}lr@{\,}c@{\,}l@{}}
          v (0,x) = u_o (x)
          &
          x & \in & \Omega
          \\
          v (t,\xi) = 0
          &
          (t,\xi) & \in & I\times\partial\Omega
        \end{array}
      \end{array}
    \right.
    \mbox{ where } \quad
    \begin{array}{@{}r@{\,}c@{\,}l@{}}
      g (t,x,v)
      & = &
      f \left(t,x,v+z (t,x)\right) \,,
      \\[3pt]
      G (t,x,v)
      & = &
      F\left(t,x,v+z (t,x)\right) - \partial_t z (t,x) \,.
    \end{array}
  \end{equation}
  To this aim, we verify the necessary assumptions. Clearly,
  \textbf{($\boldsymbol{\Omega_{2,\delta}}$)} holds. By assumption,
  $u_o \in \C{2,\delta} (\bar\Omega;\reali)$ and $u_o (\xi) = 0$ for
  all $\xi \in \Omega$. By construction, the boundary data along
  $I\times\partial\Omega$ is zero. To verify that also~\textbf{(f)}
  and~\textbf{(F)} hold for $g$ and $G$, simply use the assumptions on
  $f$, $F$ and apply Lemma~\ref{lem:elliptic}. Call $v$ the solution
  to~\eqref{eq:23} as constructed in
  Proposition~\ref{prop:limiteZero}. By Definition~\ref{def:sol}, for
  all $\hat k \in \reali$ and for all $\hat\phi \in \Cc2
  (\left]-\infty, T\right[\times \reali^n; \reali^+)$,
  \begin{align}
    \nonumber 0 \leq \ & \int_I \!\int_\Omega \Bigl[ \modulo{v (t,x) -
      \hat k} \, \partial_t \hat\phi (t,x)
    \\
    \nonumber & \quad + \sgn\left[v (t,x) - \hat k\right] \left[
      g\left(t,x,v (t,x)\right) - g (t,x,\hat k) \right] \cdot \grad
    \hat\phi (t,x)
    \\
    \nonumber & \quad + \sgn\left[v (t,x) - \hat k\right] \left[
      G\left(t,x,v (t,x)\right) - \div g (t,x,\hat k) \right] \hat\phi
    (t,x) \Bigr] \d{x} \d{t}
    \\
    \nonumber & + \int_\Omega \modulo{u_o (x) - \hat k} \hat\phi (0,x)
    \d{x}
    \\
    \nonumber & - \int_I \int_{\partial\Omega} \sgn(-\hat k) \left[
      g\left(t,\xi, \tr v (t,\xi)\right) - g (t,\xi,\hat k) \right]
    \cdot \nu (\xi) \, \hat\phi (t,\xi) \d\xi \d{t}
    \\
    \label{eq:inizio}
    = \ & \int_I \!\int_\Omega \Bigl[ \modulo{v (t,x) - \hat k}
    \, \partial_t \hat\phi (t,x)
    \\
    \nonumber & \quad + \sgn\left[v (t,x) - \hat k\right] \left[
      f\left(t,x,v (t,x) + z (t,x)\right) - f \left(t,x,\hat k+ z
        (t,x)\right) \right] \cdot \grad \hat\phi (t,x)
    \\
    \nonumber & \quad + \sgn\left[v (t,x) - \hat k\right] \left[
      F\left(t,x,v (t,x)+z (t,x)\right) -
      \partial_t z (t,x) - \div f \left(t,x,\hat k+ z (t,x)\right)
    \right]
    \\
    \nonumber & \qquad\qquad \times \hat\phi (t,x) \Bigr] \d{x} \d{t}
    \\
    \nonumber & + \int_\Omega \modulo{u_o (x) - \hat k} \hat\phi (0,x)
    \d{x}
    \\
    \nonumber & - \int_I \int_{\partial\Omega} \sgn(-\hat k) \left[
      f\left(t,\xi, \tr v (t,\xi) + z (t,\xi)\right) - f
      \left(t,\xi,\hat k + z (t,\xi) \right) \right]
    \\
    \label{eq:fine}
    & \qquad\qquad \cdot \nu (\xi) \, \hat\phi (t,\xi) \d\xi \d{t}.
  \end{align}

  We now verify that the map
  \begin{displaymath}
    u (t,x) = v (t,x) + z (t,x)
  \end{displaymath}
  is a solution to~\eqref{eq:1} in the sense of
  Definition~\ref{def:sol}. To this aim, we suitably modify the
  doubling of variables technique by Kru\v zkov,
  see~\cite{Kruzkov}. Let $\check k = k - v (t,x)$ in~\eqref{eq:solZ}
  and $\hat k = k -z (s,y)$ in~\eqref{eq:inizio}--\eqref{eq:fine} for
  $k \in \reali$. Integrate~\eqref{eq:solZ} with respect to $t$ and
  $x$ over $I \times \Omega$,
  integrate~\eqref{eq:inizio}--\eqref{eq:fine} in $s$ and $y$ over $I
  \times \Omega$.  Add the resulting expressions, with as test
  function the map $\psi_h = \psi_h (t,x,s,y)$ defined by
  \begin{equation}
    \label{eq:31}
    \psi_h (t,x,s,y) =
    \phi \left(\frac{t+s}{2}, x\right) \,
    Y_h (t-s) \, \prod_{i=1}^n Y_h (x_i-y_i),
  \end{equation}
  with $\phi \in \Cc2 (\left]-\infty, T\right[ \times \reali^n;
  \reali^+)$ and $Y_h$ defined as follows.  Let $Y \in \Cc{\infty}
  (\reali; \reali^+)$ be such
  that\\
  \begin{minipage}{0.30\linewidth}
    \begin{center}
      \includegraphics[width=\textwidth]{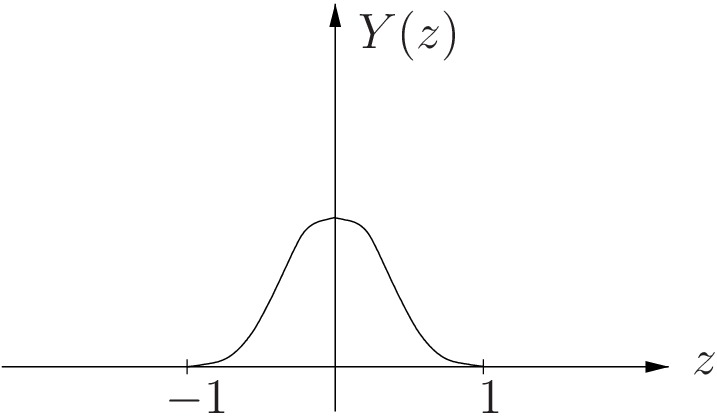}
    \end{center}
  \end{minipage}%
  \begin{minipage}{0.30\linewidth}
    \includegraphics[width=\textwidth]{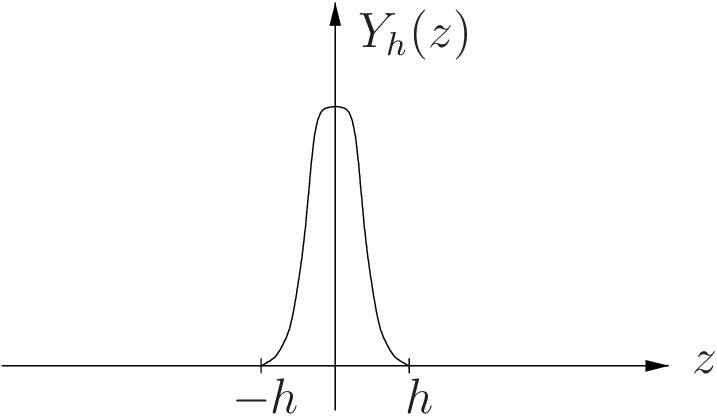}
  \end{minipage}%
  \begin{minipage}[b]{0.40\linewidth}
    \begin{equation}
      \label{eq:Yh}
      \begin{array}{@{}r@{\,}c@{\,}l@{}}
        Y (-z) & = & Y (z)
        \\[3pt]
        Y (z) & = & 0 \mbox{ for } \modulo{z} \geq 1
        \\[3pt]
        \int_{\reali} Y (z) \d{z} & = & 1 \,.
      \end{array}
    \end{equation}
  \end{minipage}\\

  \noindent and define $ Y_h (z) = \frac{1}{h} \, Y
  \left(\frac{z}{h}\right)$. Obviously, $Y_h \in \Cc{\infty} (\reali;
  \reali^+)$, $Y_h (-z) = Y_h (z)$, $Y_h (z) = 0$ for $\modulo{z} \geq
  h$, $\int_{\reali} Y_h (z) \d{z} = 1$ and $Y_h \to \delta_0$ as $h
  \to 0$, where $\delta_0$ is the Dirac delta in $0$.

  We temporarily require also that
  \begin{equation}
    \label{eq:hStar}
    h \in \left]0, h_*\right[
    \quad \mbox{ and } \quad
    \phi (t,x) = 0 \quad \mbox{ for all } \quad
    x \quad \mbox{ such that } \quad B (x,h_*) \cap (\reali^n \setminus \Omega) \neq \emptyset
  \end{equation}
  for a fixed positive $h_*$. We therefore obtain:
  \begin{align}
    \label{eq:ss0}
    0 \leq \ & \int_I \int_\Omega \int_I \int_\Omega \Bigl[ \modulo{v
      (t,x) + z(s,y) - k} \left(
      \partial_t \psi_h (t,x,s,y) +
      \partial_s \psi_h (t,x,s,y) \right)
    \\
    \label{eq:ss1}
    & \quad + \sgn\left[v (t,x) + z(s,y) - k\right]
    \\
    \label{eq:ss2}
    & \quad \times \left[ f\left(t,x,u (t,x)\right) - f
      \left(t,x,z(t,x) - z (s,y) + k\right) \right] \cdot \grad_x
    \psi_h (t,x,s,y)
    \\
    \label{eq:ss3}
    & \quad + \sgn\left[v (t,x) + z(s,y) - k\right] \left(\partial_s z
      (s,y) - \partial_t z (t,x)\right) \psi_h (t,x,s,y)
    \\
    \label{eq:ss4}
    & \quad - \sgn\left[v (t,x) + z(s,y) - k\right] \div f
    \left(t,x,z(t,x) - z (s,y) + k\right) \psi_h (t,x,s,y)
    \\
    \label{eq:ss3bis}
    & \quad + \sgn\left[v (t,x) + z(s,y) - k\right] F\left(t,x,u
      (t,x)\right) \psi_h (t,x,s,y) \Bigr] \d{x} \d{t} \d{y} \d{s}
    \\
    \label{eq:ss5}
    & + \int_I \int_\Omega \int_\Omega \modulo{u_o (x) + z(s,y) - k}
    \psi_h (0,x,s,y) \d{x} \d{y} \d{s}
    \\
    \label{eq:ss6}
    & + \int_I \int_\Omega \int_\Omega \modulo{v (t,x) - k} \psi_h
    (t,x,0,y) \d{x} \d{y} \d{t} \,.
    % \\
    % \label{eq:ss7}
    % & - \!  \int_I\! \int_\Omega \! \int_I \!  \int_{\partial\Omega}
    % \!  \sgn(z(s,y) - k) \! \left[ f\left(t,\xi, \tr u
    %     (t,\xi)\right)
    %   - f \left(t,\xi, z(t,\xi) {-} z (s,y) {+} k\right) \right]
    % \\
    % \label{eq:ss8}
    % & \qquad\qquad \cdot \nu (\xi) \, \psi_h (t,\xi,s,y) \d\xi \d{t}
    % .
  \end{align}
  To compute the limit as $h \to 0$, consider the terms above
  separately. First, proceeding as
  in~\cite[Formul\ae~(3.5)--(3.7)]{Kruzkov}, thanks to~\eqref{eq:31},
  we have
  \begin{equation}
    \label{eq:29}
    \lim_{h\to 0} \mbox{\eqref{eq:ss0}}
    =
    \int_I \int_\Omega \modulo{u (t,x) - k}
    \, \partial_t \phi (t,x) \, \d{x} \d{t} .
  \end{equation}

  To deal with~\eqref{eq:ss1}--\eqref{eq:ss2} we simplify the notation
  by introducing the map
  \begin{displaymath}
    \Upsilon (t,x,s,y)
    =
    \sgn\left[v (t,x) + z(s,y) - k\right]
    \left[
      f\left(t,x,u (t,x)\right)  - f \left(t,x,z(t,x) - z (s,y) + k\right)
    \right],
  \end{displaymath}
  so that
  \begin{align}
    & \Upsilon (t,x,s,y) \cdot \grad_x \psi_h (t,x,s,y)
    \\
    \label{eq:uffa1}
    = \ & \Upsilon (t,x,t,x) \cdot \grad\phi (t,x) \; Y_h (t-s) \,
    \prod_{j=1}^n Y_h(x_j-y_j)
    \\
    \label{eq:uffa2}
    & + \Upsilon (t,x,t,x) \cdot \left( \grad \phi\left(\frac{t+s}{2},
        x\right) - \grad\phi (t,x) \right) Y_h (t-s) \, \prod_{j=1}^n
    Y_h(x_j-y_j)
    \\
    \label{eq:uffa3}
    & + \left( \Upsilon (t,x,s,y) - \Upsilon (t,x,t,x) \right) \cdot
    \grad \phi\left(\frac{t+s}{2}, x\right) Y_h (t-s) \, \prod_{j=1}^n
    Y_h(x_j-y_j)
    \\
    \label{eq:uffa4}
    & + \sum_{i=1}^n \Upsilon_i (t,x,t,x) \, \phi\left(\frac{t+s}{2},
      x\right) \, Y_h (t-s) \, Y'_h (x_i-y_i) \, \prod_{j\neq i} Y_h
    (x_j-y_j)
    \\
    \label{eq:uffa5}
    & + \sum_{i=1}^n \left[ \Upsilon_i (t,x,s,y) - \Upsilon_i
      (t,x,t,x) \right] \!  \phi\!\left(\frac{t+s}{2}, x\right) \!
    Y_h (t-s) Y'_h (x_i-y_i) \prod_{j\neq i} \! Y_h (x_j-y_j).
  \end{align}
  Then,
  \begin{eqnarray*}
    \int_I \int_\Omega \int_I \int_\Omega
    \left[\eqref{eq:uffa1}\right]
    \d{y} \d{s} \d{x} \d{t}
    =
    \int_I \int_\Omega \Upsilon (t,x,t,x) \cdot \grad \phi (t,x) \d{x} \d{t} \,.
  \end{eqnarray*}
  To deal with~\eqref{eq:uffa2}, recall that $\modulo{Y_h} \leq
  \left(Y (0) /h\right) \chi_{\strut[-h,h]}$ and apply
  Lemma~\ref{lem:kru2} with
  \begin{displaymath}
    N = 2n+1\,,\qquad
    \begin{array}{r@{\,}c@{\,}l}
      X & = & (x,t,x),
      \\
      Y & = & (x,t,y),
    \end{array}
    \qquad
    w (s,Y)
    =
    \frac{Y (0)^{n+1}}{h^{n+1}} \;
    \Upsilon (t,x,t,x) \cdot
    \grad \phi\left(\frac{t+s}{2}, x\right),
  \end{displaymath}
  so that
  \begin{displaymath}
    \lim_{h\to 0}
    \int_I \int_\Omega \int_I \int_\Omega
    \modulo{\eqref{eq:uffa2}}
    \d{y} \d{s} \d{x} \d{t}
    =0 \,.
  \end{displaymath}
  Similarly, to deal with~\eqref{eq:uffa3}, apply Lemma~\ref{lem:kru2}
  with
  \begin{displaymath}
    N = 2n+1\,,\qquad
    \begin{array}{r@{\,}c@{\,}l}
      X & = & (x,t,x),
      \\
      Y & = & (x,t,y),
    \end{array}
    \qquad
    w (s,Y)
    =
    \frac{Y (0)^{n+1}\;
      \norma{\grad\phi}_{\L\infty (I\times\reali^n;\reali^n)}}{h^{n+1}}
    \;
    \Upsilon (t,x,s,y),
  \end{displaymath}
  so that
  \begin{displaymath}
    \lim_{h\to 0}
    \int_I \int_\Omega \int_I \int_\Omega
    \modulo{[\eqref{eq:uffa3}]}
    \d{y} \d{s} \d{x} \d{t}
    =0 \,.
  \end{displaymath}
  The term~\eqref{eq:uffa4} vanishes, since
  \begin{displaymath}
    \int_I \int_\Omega \int_I \int_\Omega
    [\eqref{eq:uffa4}]
    \d{y} \d{s} \d{x} \d{t}
    = \int \cdots \int_{x_i-h}^{x_i+h} Y'_h (x_i-y_i) \d{y_i} \cdots \d{x} \d{t}
    =
    0 \,.
  \end{displaymath}
  Finally, to estimate~\eqref{eq:uffa5}, recall that $\modulo{Y'_h}
  \leq \left(\norma{Y'}_{\L\infty (\reali;\reali)} / h^2\right)
  \chi_{\strut[-h,h]}$ and use Lemma~\ref{lem:kru2} with
  \begin{displaymath}
    N = 2n+1\,,\qquad
    \begin{array}{r@{\,}c@{\,}l}
      X & = & (x,t,x),
      \\
      Y & = & (x,t,y),
    \end{array}
    \qquad
    w (s,Y)
    =
    \frac{Y (0)^{n}
      \, \norma{Y'}_{\L\infty (\reali;\reali)}
      \,\norma{\phi}_{\L\infty (I\times\reali^n;\reali)}}{h^{n+2}} \;
    \Upsilon (t,x,s,y),
  \end{displaymath}
  so that, thanks to $2n+1 \geq n+2$,
  \begin{displaymath}
    \lim_{h \to 0} \int_I \int_\Omega \int_I \int_\Omega
    \modulo{\eqref{eq:uffa5}}
    \d{y} \d{s} \d{x} \d{t}
    =
    0\,.
  \end{displaymath}
  Hence
  \begin{eqnarray}
    \nonumber
    & &
    \lim_{h \to 0} \left[\eqref{eq:ss1} \times \eqref{eq:ss2}\right]
    \\
    \nonumber
    & = &
    \lim_{h \to 0}
    \int_I \int_{\Omega} \int_I \int_{\Omega}
    \Upsilon (t,x,s,y) \cdot
    \grad_x \psi_h (t,x,s,y)
    \d{y} \d{s} \d{x} \d{t}
    \\
    \nonumber
    & = &
    \int_I \int_{\Omega}
    \Upsilon (t,x,t,x) \cdot \grad \phi (t,x)
    \d{x} \d{t}
    \\
    & = &
    \label{eq:e1}
    \int_I \int_{\Omega}
    \sgn\left(u (t,x) - k\right)
    \left(
      f\left(t,x,u (t,x)\right)
      -
      f (t,x,k)\right)
    \cdot \grad \phi (t,x)
    \d{x} \d{t} \,.
  \end{eqnarray}
  Note that setting
  \begin{displaymath}
    N = n \,,\quad
    \begin{array}{@{}r@{}c@{}l@{}}
      X & = &x
      \\
      Y & = & y
    \end{array}
    \quad
    \mbox{ and } \quad
    w (s,y)
    =
    \frac{Y (0)^{n+1}}{h^{n+1}} \, \norma{\phi}_{\L\infty (I\times\reali^n; \reali)}
    \,
    \partial_s z (s,y)
  \end{displaymath}
  in Lemma~\ref{lem:kru2}, we obtain
  \begin{displaymath}
    \lim_{h\to 0} \left[\eqref{eq:ss3}\right]
    =
    \lim_{h\to 0}
    \int_I \int_\Omega \int_I \int_\Omega
    \modulo{ \partial_t z (t,x) - \partial_s z (s,y)}
    \psi_h (t,x,s,y)
    \d{y} \d{s} \d{x} \d{t}
    = 0.
  \end{displaymath}
  Omitting now the integrals in~\eqref{eq:ss4}, we have
  \begin{eqnarray*}
    \left[\eqref{eq:ss4}\right]
    & = &
    -
    \sgn\left[v (t,x) + z(s,y) - k\right]
    \div f \left(t,x,z(t,x) - z (s,y) + k\right)
    \psi_h (t,x,s,y)
    \\
    & = &
    -
    \sgn\left[u (t,x) - k\right] \;
    \div f (t,x,k) \;
    \phi (t,x) Y_h (t-s) \prod Y_h (x_i-y_i)
    \\
    & &
    -
    \sgn\left[u (t,x) - k\right]
    \div f (t,x,k)
    \left[
      \phi\left(\frac{t+s}{2}, x\right)
      -
      \phi (t,x)
    \right]
    Y_h (t-s) \prod Y_h (x_i-y_i)
    \\
    & &
    -
    \sgn\left[ u (t,x) - k\right]
    \left(
      \div f \left(t,x,z(t,x) - z (s,y) + k\right)
      -
      \div f \left(t,x,k\right)
    \right)
    \\
    & &
    \qquad
    \times
    \phi\left(\frac{t+s}{2}, x\right)
    Y_h (t-s) \prod Y_h (x_i-y_i)
    \\
    & &
    -
    \left(
      \sgn\left[ u (t,x) +z (s,y) - z (t,x) - k\right]
      -
      \sgn\left[ u (t,x) - k\right]
    \right)
    \\
    & &
    \qquad
    \times
    \div f \left(t,x,z(t,x) - z (s,y) + k\right)
    \phi\left(\frac{t+s}{2}, x\right)
    Y_h (t-s) \prod Y_h (x_i-y_i) \,.
  \end{eqnarray*}
  A repeated application of Lemma~\ref{lem:kru2}, together with
  standard estimates, yields
  \begin{equation}
    \label{eq:e2}
    \lim_{h\to0} \left[\eqref{eq:ss4}\right]
    =
    -
    \int_I \int_\Omega
    \sgn\left[u (t,x) - k\right] \;
    \div f (t,x,k) \;
    \phi (t,x)
    \d{x} \d{t} \,.
  \end{equation}
  The term~\eqref{eq:ss3bis} is treated similarly, since
  \begin{eqnarray*}
    \left[\eqref{eq:ss3bis}\right]
    & = &
    \sgn\left[v (t,x) + z(s,y) - k\right]
    F\left(t,x,u (t,x)\right)
    \psi_h (t,x,s,y)
    \\
    & = &
    \sgn\left[u (t,x) - k\right]
    F\left(t,x,u (t,x)\right)
    \phi (t,x)
    Y_h (t-s) \prod Y_h (x_i-y_i)
    \\
    & &
    +
    \sgn\left[u (t,x) - k\right]
    F\left(t,x,u (t,x)\right)
    \left[
      \phi\left(\frac{t+s}{2},x\right)
      -
      \phi (t,x)
    \right]
    Y_h (t-s) \prod Y_h (x_i-y_i)
    \\
    & &
    +
    \left(
      \sgn\left[u (t,x) +z (s,y) - z (t,x) - k\right]
      -
      \sgn\left[u (t,x) - k\right]
    \right)
    F\left(t,x,u (t,x)\right)
    \\
    & &
    \quad \times
    \phi\left(\frac{t+s}{2},x\right)
    Y_h (t-s) \prod Y_h (x_i-y_i) \,,
  \end{eqnarray*}
  so that further applications of Lemma~\ref{lem:kru2} lead to
  \begin{equation}
    \label{eq:e3}
    \lim_{h\to0} \left[\eqref{eq:ss3bis}\right]
    =
    \int_I \int_\Omega
    \sgn\left[u (t,x) - k\right] \;
    F \left(t,x,u (t,x)\right) \;
    \phi (t,x)
    \d{x} \d{t}  \,.
  \end{equation}
  To deal with~\eqref{eq:ss5} and~\eqref{eq:ss6}, introduce the
  function
  \begin{displaymath}
    \Upsilon (x,s,y)
    =
    \modulo{u_o (x) + z (s,y) - k}
    +
    \modulo{v (s,x) - k}
  \end{displaymath}
  and, exploiting the symmetry $Y (x) = Y (-x)$, we obtain
  \begin{align*}
    & \left[\eqref{eq:ss5} + \eqref{eq:ss6}\right]
    \\
    = \ & \int_I \int_\Omega \int_\Omega \Upsilon (x,s,y) \;\psi_h
    (0,x,s,y) \d{y} \d{x} \d{s}
    \\
    = \ & \int_I \int_\Omega \int_\Omega \Upsilon (x,0,x) \, \phi(0,
    x) \, Y_h (s) \, \prod Y_h (x_i-y_i) \d{y} \d{x} \d{s}
    \\
    & + \int_I \int_\Omega \int_\Omega \Upsilon (x,0,x) \left(
      \phi\left(\frac{s}{2}, x\right) - \phi (0,x) \right) Y_h (s) \,
    \prod Y_h (x_i-y_i) \d{y} \d{x} \d{s}
    \\
    & + \int_I \int_\Omega \int_\Omega \left( \Upsilon (x,s,y) -
      \Upsilon (x,0,x) \right) \phi\left(\frac{s}{2}, x\right) Y_h (s)
    \, \prod Y_h (x_i-y_i) \d{y} \d{x} \d{s}
    \\
    \leq \ & \frac{1}{2} \int_\Omega \Upsilon (x,0,x) \;\phi (0,x)
    \d{x}
    \\
    & + \frac{\norma{\partial_t\phi}_{\L\infty (\reali\times\reali^n;
        \reali)}}{2} \int_\Omega \Upsilon (x,0,x) \d{x} \int_0^h s \,
    Y_h (s) \d{s}
    \\
    & + \norma{\phi}_{\L\infty (\reali\times\reali^N; \reali)} \int_I
    \int_\Omega \int_\Omega \modulo{\Upsilon (x,s,y) - \Upsilon
      (x,0,x)} Y_h (s) \, \prod Y_h (x_i-y_i) \d{y} \d{x} \d{s}.
  \end{align*}
  Both the two latter terms vanish in the limit $h\to 0$. Indeed, by
  Lemma~\ref{lem:kru2}, for a.e.~$s \in [0,h]$, we have that
  $\int_\Omega \int_\Omega \modulo{\Upsilon (x,s,y) - \Upsilon
    (x,0,x)} \d{y} \d{x} \to 0$. Hence,
  \begin{equation}
    \label{eq:e4}
    \lim_{h\to 0}
    \left[\eqref{eq:ss5} + \eqref{eq:ss6}\right]
    =
    \int_\Omega \modulo{u_o (x) - k} \; \phi (0,x) \, \d{x} \,.
  \end{equation}

  We can now summarize the computations: thanks to~\eqref{eq:29},
  \eqref{eq:e1}, \eqref{eq:e2}, \eqref{eq:e3} and~\eqref{eq:e4}, in
  the limit $h \to 0$ \eqref{eq:ss0}--\eqref{eq:ss6} becomes
  \begin{align*}
    \int_I \int_\Omega \modulo{u (t,x) - k} \, \partial_t \phi (t,x)
    \, \d{x} \d{t} &
    \\
    + \int_I \int_{\Omega} \sgn\left(u (t,x) - k\right) \left(
      f\left(t,x,u (t,x)\right) - f (t,x,k)\right) \cdot \grad \phi
    (t,x) \d{x} \d{t} &
    \\
    + \int_I \int_\Omega \sgn\left(u (t,x) - k\right) \left( F
      \left(t,x,u (t,x)\right) - \div f (t,x,k) \right) \phi (t,x)
    \d{x} \d{t} &
    \\
    + \int_\Omega \modulo{u_o (x) - k} \; \phi (0,x) \, \d{x} & \geq
    \,0,
  \end{align*}
  which holds under the choice~\eqref{eq:hStar} of $\phi$. To pass to
  an arbitrary test function as in Definition~\ref{def:sol},
  substitute $\phi (t,x)$ with $\left(1-\Phi_h (x)\right)\phi (t,x)$,
  where $\phi \in \Cc2 (\left]-\infty, T\right] \times \reali^n;
  \reali^+)$ and $\Phi_h$ is as in~\eqref{eq:11}:
  \begin{align*}
    \int_I \int_\Omega \modulo{u (t,x) - k} \, \partial_t \phi (t,x)
    \left(1-\Phi_h (x)\right) \d{x} \d{t} %
    \\
    + \int_I \int_{\Omega} \sgn\left(u (t,x) - k\right) \left(
      f\left(t,x,u (t,x)\right) - f (t,x,k)\right) \cdot \grad \phi
    (t,x) \left(1-\Phi_h (x)\right) \d{x} \d{t} &
    \\
    + \int_I \int_\Omega \sgn\left(u (t,x) - k\right) \left( F
      \left(t,x,u (t,x)\right) - \div f (t,x,k) \right) \phi (t,x)
    \left(1-\Phi_h (x)\right) \d{x} \d{t} &
    \\
    + \int_\Omega \modulo{u_o (x) - k} \; \phi (0,x) \left(1-\Phi_h
      (x)\right)\d{x} &
    \\
    - \int_I \int_\Omega \sgn\left(u (t,x) - k\right) \left(
      f\left(t,x,u (t,x)\right) - f (t,x,k)\right) \cdot \grad \Phi_h
    (x) \; \phi (t,x) \d{x} \d{t} & \geq 0.
  \end{align*}
  In the limit $h \to 0$, the first 4 lines above converge to the
  first 3 lines in the left hand side in~\eqref{eq:4} of
  Definition~\ref{def:sol}, by the Dominated Convergence
  Theorem. Concerning the latter term, use Lemma~\ref{lem:uffaMultiD}
  and Lemma~\ref{lem:tr3}, which can be applied since the function
  $(w_1,w_2) \to \sgn (w_1-w_2) \left(f (t,x,w_1) - f
    (t,x,w_2)\right)$ is Lipschitz continuous,
  see~\cite[Lemma~3]{Kruzkov}. We therefore obtain that
  \begin{eqnarray*}
    & &
    -
    \lim_{h\to 0}
    \int_I \int_\Omega
    \sgn\left(u (t,x) - k\right)
    \left(
      f\left(t,x,u (t,x)\right)
      -
      f (t,x,k)\right)
    \cdot \grad \Phi_h (x) \; \phi (t,x)
    \d{x} \d{t}
    \\
    & = &
    -
    \int_I \int_{\partial\Omega}
    \sgn\left(\tr u (t,\xi) - k\right)
    \left(
      f\left(t,\xi,\tr u (t,\xi)\right)
      -
      f (t,\xi,k)\right)
    \cdot \nu (\xi) \, \phi (t,\xi)
    \d{x} \d{t}
    \\
    & = &
    -
    \int_I \int_{\partial\Omega}
    \sgn\left( u_b (t,\xi) - k\right)
    \left(
      f\left(t,\xi,\tr u (t,\xi)\right)
      -
      f (t,\xi,k)\right)
    \cdot \nu (\xi) \, \phi (t,\xi)
    \d{x} \d{t}
    \\
    & &
    -
    \int_I \int_{\partial\Omega}
    \left(
      \sgn\left(\tr u (t,\xi) - k\right)
      -
      \sgn\left( u_b (t,\xi) - k\right)
    \right)
    \\
    & &
    \qquad\qquad
    \times
    \left(
      f\left(t,\xi,\tr u (t,\xi)\right)
      -
      f (t,\xi,k)\right)
    \cdot \nu (\xi) \, \phi (t,\xi)
    \d{x} \d{t}
    \\
    & \leq &
    -
    \int_I \int_{\partial\Omega}
    \left(
      \sgn\left(\tr u (t,\xi) - k\right)
      -
      \sgn\left( u_b (t,\xi) - k\right)
    \right),
  \end{eqnarray*}
  where to get to the last line, we used the following fact:
  \begin{eqnarray*}
    & &
    -
    \int_I \int_{\partial\Omega}
    \left(
      \sgn\left(\tr u (t,\xi) - k\right)
      -
      \sgn\left( u_b (t,\xi) - k\right)
    \right)
    \\
    & &
    \qquad\qquad
    \times
    \left(
      f\left(t,\xi,\tr u (t,\xi)\right)
      -
      f (t,\xi,k)\right)
    \cdot \nu (\xi) \, \phi (t,\xi)
    \d{x} \d{t}
    \\
    & = &
    -
    \int_I \int_{\partial\Omega}
    \left(
      \sgn\left(\tr v (t,\xi) +z (t,\xi) - k\right)
      -
      \sgn\left( z (t,\xi) - k\right)
    \right)
    \\
    & &
    \qquad\qquad
    \times
    \left(
      f\left(t,\xi,\tr v (t,\xi) + z (t,\xi)\right)
      -
      f (t,\xi,k)\right)
    \cdot \nu (\xi) \, \phi (t,\xi)
    \d{x} \d{t}
    \\
    & = &
    -
    \int_I \int_{\partial\Omega}
    \left(
      \sgn\left(\tr v (t,\xi) - \left(k-z (t,\xi)\right)\right)
      -
      \sgn\left( - \left(k-z (t,\xi)\right) \right)
    \right)
    \\
    & &
    \qquad\qquad
    \times
    \left(
      g\left(t,\xi,\tr v (t,\xi)\right)
      -
      g (t,\xi,k-z (t,\xi))\right)
    \cdot \nu (\xi) \, \phi (t,\xi)
    \d{x} \d{t}
    \\
    & \leq &
    0,
  \end{eqnarray*}
  since $\phi\geq 0$ and by~\eqref{eq:6} in Proposition~\ref{prop:min}
  applied to $v$ as solution to~\eqref{eq:23}
  \begin{displaymath}
    \left(
      \sgn\left(\tr v (t,\xi) - \hat k\right)
      -
      \sgn\left( - \hat k\right) \right)
    \left(
      g\left(t,\xi,\tr v (t,\xi)\right)
      -
      g (t,\xi,\hat k)\right)
    \cdot \nu (\xi)
    \geq
    0
  \end{displaymath}
  for all $\hat k\in \reali$ and for a.e.~$(t,\xi) \in I
  \times \partial\Omega$. This completes the first part of the proof:
  the existence of a solution to~\eqref{eq:1} in the sense of
  Definition~\ref{def:sol}.

  Consider now the $\L\infty$ estimate. Recall~\eqref{eq:23},
  \eqref{eq:c1c2} and Proposition~\ref{prop:limiteZero}, so that
  \begin{eqnarray*}
    \norma{u (t)}_{\L\infty (\Omega; \reali)}
    & \leq &
    \norma{v (t)}_{\L\infty (\Omega;\reali)}
    +
    \norma{z (t)}_{\L\infty (\Omega;\reali)}
    \\
    & \leq &
    \norma{u_o}_{\L\infty (\Omega;\reali)} e^{c_1 t}
    +
    \frac{c_2 + \norma{\partial_t z}_{\L\infty ([0,t]\times\Omega;\reali)}}{c_1}
    \left(e^{c_1 t}-1\right)
    \\
    & &
    +
    \norma{z}_{\L\infty ([0,t]\times\Omega;\reali)}
    \left(e^{c_1 t}-1\right)
    +
    \norma{z (t)}_{\L\infty (\Omega;\reali)}
    \\
    & \leq & M_u (t).
  \end{eqnarray*}
  Using also Lemma~\ref{lem:elliptic}, we obtain
  \begin{displaymath}
    M_u (t)
    =
    \left(
      \norma{u_o}_{\L\infty (\Omega;\reali)}
      +
      \norma{u_b}_{\L\infty (0,t]\times\partial\Omega)}
    \right)
    e^{c_1 t}
    +
    \frac{c_2 + \norma{\partial_t u_b}_{\L\infty ([0,t]\times\partial\Omega;\reali)}}{c_1}
    \left(e^{c_1 t}-1\right),
  \end{displaymath}
  which proves the $\L\infty$ estimate~\eqref{eq:L_bound}.

  To obtain the $\tv$ bound, we use Proposition~\ref{prop:limiteZero}
  to estimate $\tv (v)$ and standard estimates on elliptic problems to
  bound $\tv (z)$. To this aim, we call $A_i (g)$, for $i=1, \ldots,
  4$, the quantities defined in~\eqref{eq:20}, but with norms of $g$
  and $G$ over $[0,t] \times \Omega \times \mathcal{V} (t)$, where
  $\mathcal{V} (t) = [-\mathcal{M}_v (t), \mathcal{M}_v (t)]$, with
  $\mathcal{M}_v (t)$ being an upper bound for $\norma{v}_{\L\infty
    ([0,t]\times\Omega;\reali)}$ as in~\eqref{eq:Ut}. Clearly,
  $\mathcal{V} (t) \subseteq \mathcal{U} (t) = [-M_u (t), M_u
  (t)]$. By~\eqref{eq:20}, and Lemma~\ref{lem:elliptic}, we have:
  \begin{eqnarray*}
    A_1 (g)
    \!\! \!\! & \leq & \!\! \!\!
    \O \Big[
    \norma{\div f}_{\L\infty ([0,t]\times\Omega \times \mathcal{U} (t); \reali^n)}
    +
    \norma{F}_{\L\infty ([0,t]\times\Omega \times \mathcal{U} (t); \reali)}
    \\
    & &
    \qquad\qquad
    +
    \norma{\partial_u f}_{\L\infty ([0,t]\times\Omega\times \mathcal{U} (t);\reali^n)}
    \,
    \norma{\grad z}_{\L\infty ([0,t]\times\Omega;\reali^n)}
    +
    \norma{\partial_t z}_{\L\infty ([0,t]\times\Omega;\reali)}
    \Big]
    \\
    \!\! \!\! & \leq & \!\! \!\!
    \O \Big[
    \norma{D f}_{\L\infty ([0,t]\times\Omega \times \mathcal{U} (t); \reali^{n\times (2+n)})}
    +
    \norma{F}_{\L\infty ([0,t]\times\Omega \times \mathcal{U} (t); \reali)}
    \\
    & &
    \qquad\qquad
    +
    \left(
      1
      +
      \norma{D f}_{\L\infty ([0,t]\times\Omega\times \mathcal{U} (t);\reali^{n\times (2+n)})}
    \right)
    \norma{u_b}_{\C{2,\alpha} ([0,t]\times\partial\Omega;\reali)}
    \Big]
    \qquad\qquad =:\mathcal{A}_1
    \\
    A_2 (g)
    \!\! \!\! & \leq & \!\! \!\!
    \O
    \Big[
    \norma{D f}_{\W1\infty ([0,t]\times\Omega \times \mathcal{U} (t); \reali^{n\times (2+n)})}
    +
    \norma{F}_{\W1\infty ([0,t]\times\Omega\times \mathcal{U} (t); \reali)}
    \\
    & &
    + \!
    \left[
      1
      +
      \norma{D f}_{\W1\infty ([0,t]\times\Omega\times \mathcal{U} (t);\reali^{n\times (2+n)}}
      +
      \norma{\partial_u F}_{\W1\infty ([0,t]\times\Omega\times \mathcal{U} (t);\reali)}
    \right] \!\!
    \norma{D z}_{\W1\infty ([0,t]\times\Omega;\reali)}
    \\
    & &
    +
    \norma{\partial^2_{uu} f}_{\L\infty ([0,t]\times \Omega \times \mathcal{U} (t); \reali^n)}
    \norma{D z}_{\W1\infty ([0,t]\times\Omega;\reali)}^2
    \Big]
    \\
    \!\! \!\! & \leq & \!\! \!\!
    \O
    \Big[
    \norma{D f}_{\W1\infty ([0,t]\times\Omega \times \mathcal{U} (t); \reali^{n\times (2+n)})}
    +
    \norma{F}_{\W1\infty ([0,t]\times\Omega\times \mathcal{U} (t); \reali)}
    \\
    & &
    +
    \left[
      1
      +
      \norma{D f}_{\W1\infty ([0,t]\times\Omega\times \mathcal{U} (t);\reali^{n\times (2+n)}}
      +
      \norma{\partial_u F}_{\W1\infty ([0,t]\times\Omega\times \mathcal{U} (t);\reali)}
    \right]
    \norma{u_b}_{\C{3,\alpha} ([0,t]\times\partial\Omega;\reali)}
    \\
    & &
    +
    \norma{\partial^2_{uu} f}_{\L\infty ([0,t]\times \Omega \times \mathcal{U} (t); \reali^n)}
    \norma{u_b}_{\C{3,\alpha} ([0,t]\times\partial\Omega;\reali)}^2
    \Big]
    \qquad\qquad\qquad\qquad\qquad\quad\qquad =:\mathcal{A}_2
    \\
    A_3 (g)
    \!\! \!\! & \leq & \!\! \!\!
    \O + \norma{\partial_u f}_{\L\infty ([0,t]\times\Omega \times \mathcal{U} (t); \reali^n)}
    \qquad\qquad\qquad\qquad\qquad\qquad\qquad\qquad\qquad\qquad\;\,
    =:\mathcal{A}_3
    \\
    A_4 (g)
    \!\! \!\! & \leq & \!\! \!\!
    \O \Big[
    1
    +
    \norma{Df}_{\W1\infty([0,t]\times\Omega\times \mathcal{U} (t);\reali^{n\times n})}
    +
    \norma{\partial_u F}_{\L\infty([0,t]\times\Omega\times \mathcal{U} (t);\reali)}
    \\
    & &
    \qquad
    +
    \norma{\partial^2_{uu}f}_{\L\infty([0,t]\times\Omega\times \mathcal{U} (t);\reali^n)}
    \left(
      \norma{\partial_t z}_{\L\infty ([0,t]\times \Omega)}
      +
      \norma{\grad z}_{\L\infty ([0,t]\times \Omega)}
    \right)
    \Big]
    \\
    \!\! \!\! & \leq & \!\! \!\!
    \O \Big[
    1
    +
    \norma{Df}_{\W1\infty([0,t]\times\Omega\times \mathcal{U} (t);\reali^{n\times n})}
    +
    \norma{\partial_u F}_{\L\infty([0,t]\times\Omega\times \mathcal{U} (t);\reali)}
    \\
    & &
    \qquad
    +
    \norma{\partial^2_{uu}f}_{\L\infty([0,t]\times\Omega\times \mathcal{U} (t);\reali^n)}
    \norma{u_b}_{\C{2,\alpha} ([0,t]\times\partial\Omega;\reali)}
    \Big]
    \qquad\qquad\qquad\qquad\qquad\quad =:\mathcal{A}_4
  \end{eqnarray*}
  which proves the bound
  \begin{equation}
    \label{eq:finita}
    \tv\left(v (t)\right)
    \leq
    \left(
      \mathcal{A}_1
      +
      \mathcal{A}_2 \, t
      +
      \mathcal{A}_3 \tv (u_o)
    \right)
    e^{\mathcal{A}_4 t} \,.
  \end{equation}
  Recall now that $\tv (u) \leq \tv (v) + \tv (z)$ and, by
  Lemma~\ref{lem:elliptic}, $\tv (z) \leq \mathcal{L}^n (\Omega)
  \norma{u_b}_{\C{2,\alpha} ([0,t]\times\partial\Omega; \reali)}$.

  The proof is completed.
\end{proofof}

\begin{proofof}{Theorem~\ref{thm:wp}}
  Assume preliminarily that $u_o \in \C2 (\bar{\Omega} ; \reali)$ and
  $u_{b} \in \C2 (I \times \partial \Omega; \reali)$.

  Let $\phi \in \Cc2 (\left]-\infty, T\right[ \times \reali^n;
  \reali^+)$ be a test function as in Definition~\ref{def:sol} with
  \begin{equation}
    \label{eq:8}
    \begin{array}{rclcr@{\;}c@{\;}l}
      \phi (0,x) & = & 0 & \mbox{ for all } &
      x & \in & \reali^n \,,
      \\
      \phi (t,\xi) & = & 0 & \mbox{ for all } &
      (t,\xi) & \in & I \times \partial\Omega \,.
    \end{array}
  \end{equation}
  Define
  \begin{equation}
    \label{eq:psi}
    \psi_h (t,x,s,y) =
    \phi \left(\frac{t+s}{2}, \frac{x+y}{2}\right) \,
    Y_h (t-s) \, \prod_{i=1}^n Y_h (x_i-y_i)
  \end{equation}
  where $Y_h$ is defined in~\eqref{eq:Yh}.  We now use the
  \emph{doubling of variables} method, see~\cite{Kruzkov}.  In
  inequality~\eqref{eq:4}, set $k = v (s,y)$ and use as test function
  the map $\psi_h = \psi_h (t,x,s,y)$ for a fixed point $(s,y)$ and
  integrate over $I \times \Omega$ with respect to $(s,y)$:
  \begin{eqnarray*}
    & &
    \int_I \int_\Omega \int_I \int_\Omega
    \bigl\{ \modulo{u (t,x) - v(s,y)} \partial_t \psi_h (t,x,s,y)
    \\
    & &
    + \sgn\left(u (t,x) - v(s,y)\right) \,
    \left[
      f\left(t,x,u (t,x)\right) - f \left(t,x,v (s,y)\right)
    \right]
    \cdot
    \grad_x \psi_h(t,x,s,y)
    \\
    & &
    +
    \sgn\left(u (t,x) - v(s,y)\right) \,
    \left[
      F \left(t,x,u (t,x)\right) - \div f \left(t,x,v (s,y)\right)
    \right] \,
    \psi_h(t,x,s,y)
    \bigr\} \d{x} \d{t} \d{y} \d{s}
    \\
    & &
    +
    \int_I \int_\Omega \int_\Omega
    \psi_h (0,x,s,y) \,
    \modulo{u_o (x) - v (s,y)} \d{x} \d{y} \d{s}
    \geq 0.
  \end{eqnarray*}
  In the same way, starting from the inequality~\eqref{eq:4} for the
  function $v = v (s,y)$, set $k = u(t,x)$, consider the same test
  function $\psi_h = \psi_h (t,x,s,y)$ and integrate over $I \times
  \Omega$ with respect to $(t,x)$:
  \begin{eqnarray*}
    & &
    \int_I \int_\Omega \int_I \int_\Omega
    \bigl\{ \modulo{v(s,y) - u (t,x)} \partial_s \psi_h(t,x,s,y)
    \\
    & &
    + \sgn\left(v(s,y) - u (t,x)\right) \,
    \left[
      f \left(s,y,v (s,y)\right) - f \left(s,y,u (t,x)\right)
    \right]
    \cdot
    \grad_y \psi_h(t,x,s,y)
    \\
    & &
    +
    \sgn\left(v(s,y) - u (t,x)\right) \,
    \left[
      F \left(s,y,v (s,y)\right) - \div f \left(s,y,u (t,x)\right)
    \right] \,
    \psi_h(t,x,s,y)
    \bigr\} \d{y} \d{s}  \d{x} \d{t}
    \\
    & &
    +
    \int_I \int_\Omega \int_\Omega
    \psi_h (t,x,0,y) \,
    \modulo{v_o (y) - u (t,x)} \d{y} \d{x} \d{t}
    % \\
    % & &
    % -
    % \int_I \int_\Omega \int_I \!\!\!\int_{\partial\Omega}
    % \sgn\left(v_b (s,\eta) - u(t,x)\right)
    % \left[
    %   f \left(s,\eta,\tr v (s,\eta)\right) - f \left(s,\eta,u
    %     (t,x)\right)
    % \right]
    % \\
    % & &
    % \qquad \qquad \qquad \qquad
    % \cdot \nu (\eta) \,
    % \psi_h(t,x,s,\eta)
    % \d{\eta} \d{s} \d{x} \d{t}
    \geq 0.
  \end{eqnarray*}
  Summing the last two inequalities above, we obtain:
  \begin{align}
    \label{eq:ante7} 0 \leq & \int_I \int_\Omega \int_I \int_\Omega
    \bigl\{ \modulo{u (t,x) - v(s,y)} \left( \partial_t \psi_h
      (t,x,s,y) + \partial_s \psi_h (t,x,s,y)\right)
    \\
    \nonumber & + \sgn\left(u (t,x) - v(s,y)\right) \, \left[
      f\left(t,x,u (t,x)\right) - f \left(t,x,v (s,y)\right) \right]
    \cdot \grad_x \psi_h (t,x,s,y)
    \\
    \nonumber & + \sgn\left(v(s,y) - u (t,x)\right) \, \left[ f
      \left(s,y,v (s,y)\right) - f \left(s,y,u (t,x)\right) \right]
    \cdot \grad_y \psi_h (t,x,s,y)
    \\
    \nonumber & + \sgn\left(u (t,x) - v(s,y)\right) \, \bigl[ F
    \left(t,x,u (t,x)\right) - F \left(s,y,v (s,y)\right)
    \\
    \label{eq:7}
    & \quad + \div f \left(s,y,u (t,x)\right) - \div f \left(t,x,v
      (s,y)\right) \bigr] \, \psi_h (t,x,s,y) \bigr\} \d{x} \d{t}
    \d{y} \d{s}
    \\
    \label{eq:via1} & + \int_I \int_\Omega \int_\Omega \psi_h
    (0,x,s,y) \, \modulo{u_o (x) - v (s,y)} \d{x} \d{y} \d{s}
    \\
    \label{eq:via2} & + \int_I \int_\Omega \int_\Omega \psi_h
    (t,x,0,y) \, \modulo{v_o (y) - u (t,x)} \d{y} \d{x} \d{t}.
    % \\
    % \label{eq:2int1}
    % &
    % \begin{aligned}[c]
    %   - \int_I \int_\Omega \int_I
    %   \int_{\partial\Omega} \sgn\left(u_b (t,\xi) -
    %     v(s,y)\right) \left[ f \left(t,\xi,\tr u (t,\xi)\right) - f
    %     \left(t,\xi,v (s,y)\right) \right]
    %   \\
    %   \cdot \nu (\xi) \, \psi_h (t,\xi,s,y) \d{\xi} \d{t} \d{y}
    %   \d{s}
    %   \qquad \qquad \qquad \qquad
    % \end{aligned}
    % \\
    % \label{eq:3int1} &
    % \begin{aligned}[c]
    %   - \int_I \int_\Omega \int_I
    %   \int_{\partial\Omega} \sgn\left(v_b (s,\eta) -
    %     u(t,x)\right) \left[ f \left(s,\eta,\tr v (s,\eta)\right) -
    %     f
    %     \left(s,\eta,u (t,x)\right) \right]
    %   \\
    %   \cdot \nu (\eta) \, \psi_h (t,x,s,\eta) \d{\eta} \d{s} \d{x}
    %   \d{t}.  \qquad \qquad \qquad \qquad
    % \end{aligned}
  \end{align}
  We follow the proof of~\cite[Theorem~1]{Kruzkov}. As $h\to 0$, the
  first integral in the $5$
  lines~$\left[\eqref{eq:ante7}\cdots\eqref{eq:7}\right]$, can be
  treated exactly as in~\cite{Kruzkov}, leading to the following
  analog of~\cite[Formula~(3.12)]{Kruzkov}:
  \begin{eqnarray}
    \nonumber
    & &
    \lim_{h \to 0+}
    \left[\eqref{eq:ante7}\cdots\eqref{eq:7}\right]
    \\
    \nonumber
    & = &
    \int_I \int_\Omega
    \bigl\{ \modulo{u (t,x) - v(t,x)} \,
    \partial_t \phi (t,x)
    \\
    \label{eq:Kruz}
    & &
    \qquad
    +
    \sgn\left(u (t,x) - v(t,x)\right)
    \left[ f \left(t,x,u  (t,x)\right) - f \left(t,x,v (t,x)\right) \right]
    \cdot \grad \phi (t,x)
    \\
    \nonumber
    & &
    \qquad
    +
    \sgn\left(u (t,x) - v(t,x)\right) \, \left[ F
      \left(t,x,u (t,x)\right) - F \left(t,x,v (t,x)\right) \right] \,
    \phi (t,x) \bigr\} \d{x} \d{t}.
  \end{eqnarray}
  To compute the second integral~\eqref{eq:via1}, observe
  preliminarily that
  \begin{eqnarray*}
    \modulo{u_o (x) - v (s,y)}
    & \leq &
    \modulo{u_o (x) - v (s,y)}
    -
    \modulo{u_o (x) - v (s,x)}
    \\
    & &
    +
    \modulo{u_o (x) - v (s,x)}
    -
    \modulo{u_o (x) - v (0+,x)}
    \\
    & &
    +
    \modulo{u_o (x) - v (0+,x)}
    \\
    & \leq &
    \modulo{v (s,y) - v (s,x)}
    +
    \modulo{v (s,x) - v (0+,x)}
    +
    \modulo{u_o (x) - v (0+,x)}\,.
  \end{eqnarray*}
  Hence:
  \begin{eqnarray}
    \label{eq:a}
    \left[\mbox{\eqref{eq:via1}}\right]
    & \leq &
    \int_I \int_\Omega \int_\Omega
    \psi_h (0,x,s,y) \modulo{v (s,y) - v (s,x)} \d{x} \d{y} \d{s}
    \\
    \label{eq:b}
    & &
    +
    \int_I \int_\Omega \int_\Omega
    \psi_h (0,x,s,y) \modulo{v (s,x) - v (0+,x)} \d{x} \d{y} \d{s}
    \\
    \label{eq:c}
    & &
    +
    \int_I \int_\Omega \int_\Omega
    \psi_h (0,x,s,y) \modulo{u_o (x) - v (0+,x)} \d{x} \d{y} \d{s} \,.
  \end{eqnarray}
  Compute the limit as $h \to 0+$ of the three lines
  separately. First, apply Lemma~\ref{lem:kru2} in the case of a
  function $w$ depending only on the space variable to obtain
  \begin{displaymath}
    \lim_{h\to 0+}[\eqref{eq:a}]
    =
    \lim_{h\to 0+}
    \int_I \int_\Omega \int_\Omega
    \psi_h (0,x,s,y) \modulo{v (s,y) - v (s,x)} \d{x} \d{y} \d{s}
    =
    0\,.
  \end{displaymath}
  Second, by Lemma~\ref{lem:tr2},
  \begin{displaymath}
    \lim_{h\to 0+} [\eqref{eq:b}]
    =
    \int_\Omega
    \int_\Omega
    \left(
      \lim_{h\to 0+}
      \int_I
      \psi_h (0,x,s,y) \modulo{v (s,x) - v (0+,x)} \d{s}
    \right)
    \d{x} \d{y}
    =
    0 \,.
  \end{displaymath}
  Third, by the choice of the function $Y_h$ and~\eqref{eq:8}
  \begin{displaymath}
    \lim_{h\to 0+} [\eqref{eq:c}]
    =
    \int_\Omega \phi (0,x) \modulo{u_o (x) - v (0+,x)} \d{x}
    =
    0\,,
  \end{displaymath}
  proving that $\lim_{h\to 0+} \mbox{\eqref{eq:via1}} = 0$. The
  term~\eqref{eq:via2} is treated exactly in the same way.  Hence,
  \begin{align*}
    \lim_{h \to 0+} \left[\eqref{eq:ante7} \cdots \eqref{eq:via2}
    \right] = {} & \left[\eqref{eq:Kruz}\right],
  \end{align*}
  so that
  \begin{align}
    \nonumber 0 \leq {} & \int_I \int_\Omega \bigl\{ \modulo{u (t,x) -
      v(t,x)} \,
    \partial_t \phi (t,x)
    \\
    \label{eq:okphi}
    & \qquad + \sgn\left(u (t,x) - v(t,x)\right) \left[ f \left(t,x,u
        (t,x)\right) - f \left(t,x,v (t,x)\right) \right] \!\!\cdot
    \grad \phi (t,x)
    \\
    \nonumber & \qquad + \sgn\left(u (t,x) - v(t,x)\right) \, \left[ F
      \left(t,x,u (t,x)\right) - F \left(t,x,v (t,x)\right) \right] \,
    \phi (t,x) \bigr\} \d{x} \d{t}.
  \end{align}
  For $h> 0$, recall the function $\Phi_h \in \Cc2 (\reali^n; [0,1])$
  defined in~\eqref{eq:11}. Let $\Psi \in \Cc2 (]0,T[; \reali^+)$ with
  $\Psi (0)=0$. Note that for any $h>0$ sufficiently small, the map
  \begin{displaymath}
    \phi_h (t,x) = \Psi (t) \, \left( 1 - \Phi_h(x) \right)
    \quad \mbox{for} \quad (t,x) \in \left]-\infty,T\right[ \times \reali^n
  \end{displaymath}
  satisfies~\eqref{eq:8}.  Introduce this test function $\phi_h$
  in~\eqref{eq:okphi} and pass to the limit $h \to 0$ to obtain:
  \begin{align}
    \nonumber 0 \leq {} & \int_I \! \int_\Omega \bigl\{ \modulo{u
      (t,x) - v(t,x)} \, \Psi' (t)
    \\
    \label{eq:2}
    & + \sgn\left(u (t,x) - v(t,x)\right) \, \left[ F \left(t,x,u
        (t,x)\right) - F \left(t,x,v (t,x)\right) \right] \, \Psi (t)
    \bigr\} \d{x} \d{t}
    \\
    \nonumber & - \int_I \! \int_{\partial\Omega} \sgn\left(\tr u
      (t,\xi) - \tr v(t,\xi)\right) \left[ f \left(t,\xi,\tr u
        (t,\xi)\right) - f \left(t,\xi,\tr v (t,\xi)\right) \right]
    \cdot \nu (\xi) \, \Psi(t) \d\xi \d{t},
  \end{align}
  where we used Lemma~\ref{lem:uffaMultiD} and Lemma~\ref{lem:tr3},
  which can be applied since the function $(u,v) \to \sgn(u-v)
  \left(f(t,x,u)-f(t,x,v) \right)$ is Lipschitz continuous,
  see~\cite[Lemma~3]{Kruzkov}.

  \medskip

  To ease readability, we now omit the dependence on $(t,\xi)$ of $f,
  \tr u, \tr v, u_b, v_b, \nu$.  Apply~\eqref{eq:6} to $u$ choosing $k
  = \tr v$ and to $v$ choosing $k= \tr u$:
  \begin{align*}
    - \sgn\left( \tr u - \tr v \right) \left[ f \left( \tr u \right) -
      f \left( \tr v \right) \right]\cdot \nu \leq {} & - \sgn\left(
      u_b - \tr v \right) \left[ f \left( \tr u \right) - f \left( \tr
        v \right) \right]\cdot \nu,
    \\
    - \sgn\left( \tr u - \tr v \right) \left[ f \left( \tr u \right) -
      f \left( \tr v \right) \right]\cdot \nu \leq {} & - \sgn\left(
      v_b - \tr u \right) \left[ f \left( \tr v \right) - f \left( \tr
        u \right) \right]\cdot \nu \, .
  \end{align*}
  Hence,
  \begin{equation}
    \label{eq:5}
    \begin{array}{ll}
      &
      - \sgn\left( \tr u  - \tr v  \right)
      \left[ f \left(\tr u  \right) - f \left(\tr v \right) \right]\cdot \nu
      \\[6pt]
      \leq
      &
      \displaystyle
      \frac12 \left[ \sgn\left( v_b  - \tr u  \right) -
        \sgn\left( u_b  - \tr v  \right) \right]
      \left[ f \left(\tr u  \right) - f \left(\tr v \right) \right]\cdot \nu\,.
    \end{array}
  \end{equation}
  The second line in~\eqref{eq:5} attains the following values:
  \renewcommand{\arraystretch}{1.5}{\begin{center}
      \begin{tabular}{@{}c|c|c|c@{}}
        & $u_b -\tr v > 0$
        & $u_b -\tr v = 0$
        & $u_b -\tr v < 0$ \\
        \hline
        $v_b -\tr u > 0$
        & $0$
        & $ \frac12 \left(f (\tr u) - f (\tr v)\right) \cdot \nu $
        & \fbox{$\boldsymbol{\left(f (\tr u) - f (\tr v)\right) \cdot \nu}$}$\vphantom{\Big|^|_|}$ \\
        \hline
        $v_b -\tr u = 0$
        & $\frac12 \left(f (\tr v) - f (\tr u)\right) \cdot \nu$
        & $0$
        &  $ \frac12\left(f (\tr u) - f (\tr v)\right) \cdot \nu $ \\
        \hline
        $v_b -\tr u < 0$
        & \fbox{$\boldsymbol{\left(f (\tr v) - f (\tr u)\right) \cdot \nu}$}$\vphantom{\Big|^|_|}$
        & $\frac12 \left(f (\tr v) - f (\tr u)\right) \cdot \nu$
        & $0$
      \end{tabular}
    \end{center}} \vspace{0.2cm}
  \noindent Clearly, we can reduce our study to two cases highlighted
  in the table above.  Applying~\eqref{eq:alive} to $u$ with $k = u_b$
  and to $v$ with $k=v_b$ leads to
  \begin{align}
    \label{eq:A}
    \sgn\left( \tr u - u_b \right) \left[ f\left(\tr u \right) -
      f\left(u_b \right) \right] \cdot \nu \geq {} & 0 \,,
    \\
    \label{eq:B}
    \sgn\left( \tr v - v_b \right) \left[ f\left(\tr v \right) -
      f\left(v_b\right) \right] \cdot \nu \geq {} & 0 \,.
  \end{align}
  Let $J = J (t,\xi) =\left\{k \in \reali \colon \left(u_b (t,\xi)
      -k\right) \left(k-v_b (t,\xi)\right) \geq 0\right\}$.  Focus on
  each case separately.
  \begin{description}
  \item[Case I:] $\boldsymbol{u_b -\tr v \leq 0}$ and $\boldsymbol{v_b
      -\tr u \geq 0}$.
    \begin{enumerate}
    \item If $u_b \leq \tr v \leq \tr u \leq v_b$ or $u_b \leq \tr u
      \leq \tr v \leq v_b$, then
      \begin{displaymath}
        \left(f (\tr u) - f (\tr v)\right) \cdot \nu
        \leq
        \sup_{s,r \in J}
        \norma{f (s) - f (r)},
      \end{displaymath}
      where $\norma{\cdot}$ represents the Euclidean norm in
      $\reali^n$.
    \item If $\tr u \leq v_b \leq u_b \leq \tr v$ or $\tr u \leq u_b
      \leq v_b \leq \tr v$, then
      \begin{align*}
        \mbox{by~\eqref{eq:A}} & \Rightarrow f(\tr u) \cdot \nu \leq f
        (u_b) \cdot \nu, & & & \mbox{by~\eqref{eq:B}} & \Rightarrow
        f(\tr v) \cdot \nu \geq f (v_b) \cdot \nu.
      \end{align*}
      Hence we have
      \begin{displaymath}
        \left(f (\tr u) - f (\tr v)\right) \cdot \nu
        \leq
        \left(f (u_b) - f (v_b)\right) \cdot \nu
        \leq
        \sup_{s,r \in J}
        \norma{f (s) - f (r)}.
      \end{displaymath}
    \item If $u_b\leq \tr u \leq v_b \leq \tr v$, by~\eqref{eq:B}
      $f(\tr v) \cdot \nu \geq f (v_b) \cdot \nu$, and using the fact
      that $\tr u \in [u_b,v_b]$, we get
      \begin{displaymath}
        \left(f (\tr u) - f (\tr v)\right) \cdot \nu
        \leq
        \left(f (\tr u) - f (v_b)\right) \cdot \nu
        \leq
        \sup_{s,r \in J}
        \norma{f (s) - f (r)}.
      \end{displaymath}
    \item If $\tr u \leq u_b\leq \tr v \leq v_b$, by~\eqref{eq:A}
      $f(\tr u) \cdot \nu \leq f (u_b) \cdot \nu$, and using the fact
      that $\tr v \in [u_b,v_b]$, we obtain
      \begin{displaymath}
        \left(f (\tr u) - f (\tr v)\right) \cdot \nu
        \leq
        \left(f (u_b) - f (\tr v)\right) \cdot \nu
        \leq
        \sup_{s,r \in J}
        \norma{f (s) - f (r)}.
      \end{displaymath}
    \end{enumerate}
  \item[Case II:] $\boldsymbol{u_b -\tr v \geq 0}$ and
    $\boldsymbol{u_b -\tr v \leq 0}$.
    \begin{enumerate}
    \item If $v_b \leq \tr v \leq \tr u \leq u_b$ or $v_b \leq \tr u
      \leq \tr v \leq u_b$, then
      \begin{displaymath}
        \left(f (\tr v) - f (\tr u)\right) \cdot \nu
        \leq
        \sup_{s,r \in J}
        \norma{f (s) - f (r)}.
      \end{displaymath}
    \item If $\tr v \leq v_b \leq u_b \leq \tr u$ or $\tr v \leq u_b
      \leq v_b \leq \tr u$, then
      \begin{align*}
        \mbox{by~\eqref{eq:A}} & \Rightarrow f(\tr u) \cdot \nu \geq f
        (u_b) \cdot \nu, & & & \mbox{by~\eqref{eq:B}} & \Rightarrow
        f(\tr v) \cdot \nu \leq f (v_b) \cdot \nu.
      \end{align*}
      Hence we have
      \begin{displaymath}
        \left(f (\tr v) - f (\tr u)\right) \cdot \nu
        \leq
        \left(f (v_b) - f (u_b)\right) \cdot \nu
        \leq
        \sup_{s,r \in J}
        \norma{f (s) - f (r)}.
      \end{displaymath}
    \item If $v_b \leq \tr v \leq u_b \leq \tr u$, by~\eqref{eq:A}
      $f(\tr u) \cdot \nu \geq f (u_b) \cdot \nu$ and using the fact
      that $\tr v \in [v_b,u_b]$, we get
      \begin{displaymath}
        \left(f (\tr v) - f (\tr u)\right) \cdot \nu
        \leq
        \left(f (\tr v) - f (u_b)\right) \cdot \nu
        \leq
        \sup_{s,r \in J}
        \norma{f (s) - f (r)}.
      \end{displaymath}
    \item If $\tr v \leq v_b \leq \tr u \leq u_b$, by~\eqref{eq:B}
      $f(\tr v) \cdot \nu \leq f (v_b) \cdot \nu$, and using the fact
      that $\tr u \in [v_b,u_b]$, we obtain
      \begin{displaymath}
        \left(f (\tr v) - f (\tr u)\right) \cdot \nu
        \leq
        \left(f (v_b) - f (\tr u)\right) \cdot \nu
        \leq
        \sup_{s,r \in J}
        \norma{f (s) - f (r)}.
      \end{displaymath}
    \end{enumerate}
  \end{description}
  Hence, \eqref{eq:5} can be estimated as follows:
  \begin{align*}
    & - \sgn\left( \tr u (t,\xi) - \tr v (t,\xi) \right) \left[ f
      \left( t,\xi,\tr u (t,\xi) \right) - f \left(t,\xi,\tr v (t,\xi)
      \right) \right]\cdot \nu (\xi)
    \\
    \leq {} &
    \begin{aligned}[t]
      \frac12 \left[ \sgn\left( v_b (t,\xi) - \tr u (t,\xi) \right) -
        \sgn\left( u_b (t,\xi) - \tr v (t,\xi) \right) \right] &
      \\
      \times \left[ f \left(t,\xi,\tr u (t,\xi) \right) - f
        \left(t,\xi,\tr v (t,\xi) \right) \right]\cdot \nu (\xi) &
    \end{aligned}
    \\
    \leq {} & \sup_{s,r \in J (t,\xi)} \norma{f (t,\xi,s) - f
      (t,\xi,r)}
    \\
    \leq {} & \norma{\partial_u f}_{\L\infty (\Sigma; \reali^n)} \,
    \modulo{u_b (t,\xi) - v_b (t,\xi)}.
  \end{align*}
  Since $\Psi$ assumes only positive values, we can
  estimate~\eqref{eq:2} by
  \begin{equation}
    \label{eq:3}
    \begin{array}{@{}r@{\,}l@{}}
      0 \leq \left[\eqref{eq:2}\right]
      \leq
      &
      \displaystyle
      \int_I \! \int_\Omega \!
      \left\{
        \modulo{u (t,x) - v(t,x)} \, \Psi' (t)
        +
        \norma{\partial_u F}_{\L\infty (\Sigma; \reali)}
        \modulo{u (t,x) - v(t,x)} \Psi (t)
      \right\}
      \d{x}
      \d{t}
      \\
      &
      \displaystyle
      \qquad
      +
      \norma{\partial_u f}_{\L\infty (\Sigma; \reali^n)}
      \int_I \! \int_{\partial\Omega}
      \modulo{u_b (t,\xi) - v_b (t,\xi)}
      \, \Psi (t) \d\xi \d{t}.
    \end{array}
  \end{equation}
  Introduce $\tau, t$ such that $0<\tau<t<T$. Note that the map $s
  \to \Psi_h (s)$ defined by\\
  \begin{minipage}{0.5\linewidth}
    \begin{displaymath}
      \begin{array}{l}
        \displaystyle
        \Psi_h (s) =  \alpha_h (s-\tau-h) - \alpha_h (s-t-h),
        \\
        \displaystyle
        \mbox{where }
        \alpha_h (z) = \int_{-\infty}^z Y_h (\zeta) \d\zeta
        \\
        \mbox{and } Y_h \mbox{ as in \eqref{eq:Yh}},
      \end{array}
    \end{displaymath}
  \end{minipage}%
  \begin{minipage}{0.5\linewidth}
    \begin{center}
      \includegraphics[width=0.8\textwidth]{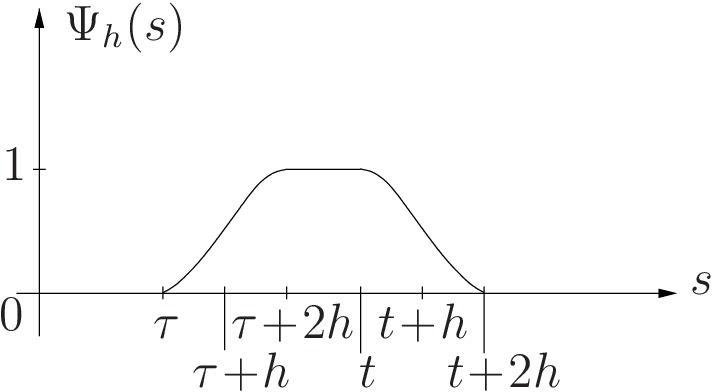}
    \end{center}
  \end{minipage}\\ \smallskip satisfies~\eqref{eq:8}. Hence, we
  substitute $\Psi_h$ for $\Psi$ in~\eqref{eq:3}. Observe that
  $\Psi_h \to \caratt{[\tau,t]}$ and $\Psi_h' \to \delta_\tau
  -\delta_t$ as $h$ tends to $0$.  At the limit we obtain
  \begin{align*}
    0 \leq {} & \int_\Omega \modulo{u (\tau,x) - v (\tau,x)} \d{x} -
    \int_\Omega \modulo{u (t,x) - v (t,x)} \d{x}
    \\
    & + \norma{\partial_u F}_{\L\infty (\Sigma; \reali)} \int_\tau^t
    \!\!\! \int_\Omega \modulo{u (s,x) - v (s,x)} \d{x} \d{s}
    \\
    & + \norma{\partial_u f}_{\L\infty (\Sigma; \reali^n)} \int_\tau^t
    \!\!\! \int_{\partial\Omega} \modulo{u_b (s,\xi) - v_b (s,\xi)}
    \d\xi \d{s} \,.
  \end{align*}
  A Gronwall type argument yields
  \begin{align}
    \nonumber & \int_\Omega \modulo{u (t,x) - v (t,x)} \d{x}
    \\
    \label{eq:9}
    \leq {} & e^{\norma{\partial_u F}_{\L\infty (\Sigma; \reali)}
      (t-\tau)} \int_\Omega \modulo{u (\tau,x) - v (\tau,x)} \d{x}
    \\
    \nonumber & + \norma{\partial_u f}_{\L\infty (\Sigma; \reali^n)}
    \int_\tau^t e^{(t-\tau-s) \, \norma{\partial_u F}_{\L\infty
        (\Sigma; \reali)}} \int_{\partial\Omega} \modulo{u_b (s,\xi) -
      v_b (s,\xi)} \d\xi \d{s} \,.
  \end{align}
  In the limit $\tau \to 0$ for a.e.~$\tau$, an application of
  Proposition~\ref{prop:sol0} completes the proof when $u_o \in \C2
  (\bar{\Omega} ; \reali)$ and $u_{b} \in \C2 (I \times \partial
  \Omega; \reali)$. The general case now follows by a straightforward
  regularization argument.
\end{proofof}

\section{Proofs Related to Section~\ref{sec:NDA}}
\label{sec:PNDA}

\begin{proofof}{Proposition~\ref{prop:sol0}}
  Let $M = \max \{ \norma{u}_{\L\infty (I\times \Omega; \reali)}, \,
  \norma{u_o}_{\L\infty (\Omega; \reali)}, \, \norma{u_b}_{\L\infty (I
    \times \partial\Omega; \reali)} \}$. We first prove that choosing
  $k \in \left]-\infty, -M \right[ \cap \left]M, +\infty\right[$, the
  terms containing $k$ in the left hand side in~\eqref{eq:4}
  vanish. Indeed, assuming $k < - M$, observe that
  \begin{equation}
    \label{eq:segni}
    \begin{aligned}[c]
      \modulo{u (t,x) - k} = {} & u (t,x) - k & & & \sgn\left(u (t,x)
        - k\right) = {} & 1
      \\
      \modulo{u_o (x) - k} = {} & u_o (x) - k & & & \sgn\left(u_b
        (t,\xi) - k\right) = {} & 1 \,.
    \end{aligned}
  \end{equation}
  Therefore, the terms containing $k$ in the left hand side
  in~\eqref{eq:4} are:
  \begin{align*}
    \int_I \int_\Omega \left( - k \, \partial_t \phi (t,x) - f (t,x,k)
      \cdot \nabla \phi (t,x) - \div f (t,x,k) \, \phi (t,x) \right)
    \d{x} \d{t} &
    \\
    -\int_\Omega k \, \phi (0,x)\d{x} +\int_I \int_{\partial\Omega} f
    (t,\xi,k) \cdot \nu (\xi) \phi (t,\xi) \d{\xi} \d{t} &
    \\
    = \int_I \int_\Omega - \div \left(f (t,x,k) \, \phi (t,x)\right)
    \d{x} \d{t} + \int_I \int_{\partial\Omega} f (t,\xi,k) \cdot \nu
    (\xi) \phi (t,\xi) \d{\xi} \d{t} &
    \\
    =0 \,.&
  \end{align*}
  % In the case $k>M$ the same equality is obtained.
  % Assuming $k>M$ implies that we are in the situation opposite
  % to~\eqref{eq:segni}, but the result is the same.
  % Focus on the case $k<-M$.
  The inequality~\eqref{eq:4} now reads
  \begin{align*}
    0 \leq {} & \int_I \int_\Omega \Big\{ u (t,x) \, \partial_t\phi
    (t,x) + f (t,x,u) \cdot \nabla \phi (t,x) + F (t,x,u) \, \phi
    (t,x) \Big\} \d{x} \d{t}
    \\
    & + \int_\Omega \phi (0,x) \, u_o (x) \d{x} - \int_I
    \int_{\partial\Omega} f\left(t,\xi,\left(\tr u\right)
      (t,\xi)\right) \cdot \nu (\xi) \, \phi (t,\xi) \, \d{\xi} \d{t}
    \\
    = {} & \int_I \int_\Omega \Big\{ u (t,x) \, \partial_t\phi (t,x) +
    f (t,x,u) \cdot \nabla \phi (t,x) + F (t,x,u) \, \phi (t,x) \Big\}
    \d{x} \d{t}
    \\
    & + \int_\Omega \phi (0,x) \, u_o (x) \d{x} - \int_I
    \int_{\partial\Omega} \tr f\left(t,\xi, u (t,\xi)\right) \cdot \nu
    (\xi) \, \phi (t,\xi) \, \d{\xi} \d{t}
    \\
    = {} & \int_I \int_\Omega \Big\{ u (t,x) \, \partial_t\phi (t,x) -
    \phi (t,x) \, \div f (t,x,u) + F (t,x,u) \, \phi (t,x) \Big\}
    \d{x} \d{t}
    \\
    & + \int_\Omega \phi (0,x) \, u_o (x) \d{x},
  \end{align*}
  where we apply Lemma~\ref{lem:tr3} and the Divergence Theorem.
  Choose $\phi_k \in \Cc2 (]-\infty,T[ \times \reali^n; \reali^+)$ as
  \begin{displaymath}
    \phi_k (t,x) = \theta_k (t) \, \psi (x),
  \end{displaymath}
  where $\theta_k \in \Cc2 ([0,T[; [0,1])$ is such that\\
  \begin{minipage}[c]{.6\linewidth}
    \begin{align*}
      &\theta_k (0) = 1, \phantom{\sum}
      \\
      & \theta_k (t) = 0 \mbox{ for all } t \geq 1/\modulo{k},
      \\
      & \sup_k \frac{1}{\modulo{k}} \norma{\theta'_k}_{\C0} < +
      \infty,
    \end{align*}
  \end{minipage}%
  \begin{minipage}[c]{.4\linewidth}
    \begin{center}
      \includegraphics[width=0.8\textwidth]{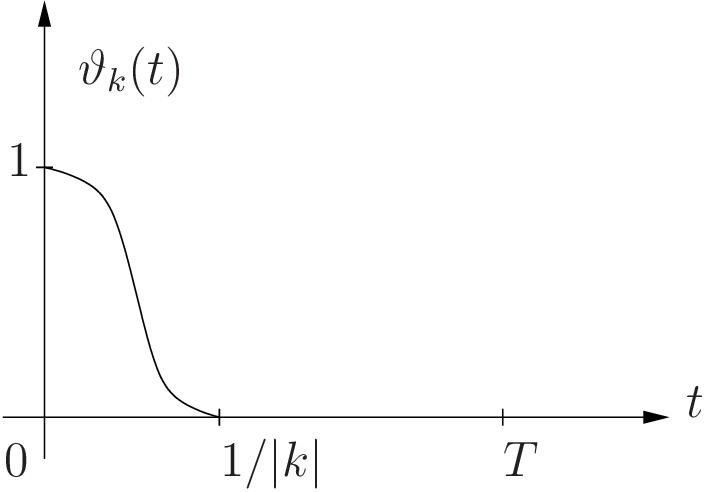}
    \end{center}
  \end{minipage}\\
  while $\psi \in \Cc2 (\reali^n; \reali^+)$.  Hence,
  \begin{align*}
    0 \leq {} & \int_I \int_\Omega \Big\{ u (t,x) \, \theta_k' (t) \,
    \psi (x) + \left( F (t,x,u) -\div f (t,x,u) \right) \theta_k (t)
    \, \psi (x) \Big\} \d{x} \d{t}
    \\
    & + \int_\Omega \psi (x) \, u_o (x) \d{x}.
  \end{align*}
  Pass now to the limit for $k \to - \infty$. Observe that, by the
  Dominated Convergence Theorem,
  \begin{displaymath}
    \lim_{k \to + \infty} \int_I \int_\Omega
    \left( F (t,x,u) -\div f (t,x,u) \right)  \theta_k (t) \, \psi (x)
    \d{x} \d{t}
    =
    0.
  \end{displaymath}
  Thanks to Lemma~\ref{lem:uffa1} and to the Dominated Convergence
  Theorem we also have
  \begin{displaymath}
    \lim_{k \to + \infty} \int_I \int_\Omega
    u (t,x) \, \theta_k' (t) \, \psi (x)
    \d{x} \d{t}
    =
    - \int_\Omega u (0+,x) \, \psi (x) \d{x}.
  \end{displaymath}
  Then, in the case $k < -M$, \eqref{eq:4} reduces to
  \begin{displaymath}
    0 \leq
    \int_\Omega
    \psi (x) \, u_o (x) \d{x}
    -
    \int_\Omega u (0+,x) \, \psi (x) \d{x}.
  \end{displaymath}
  If $k>M$ the signs in~\eqref{eq:segni} are opposite and analogous
  computations show that~\eqref{eq:4} reduces to
  \begin{displaymath}
    0 \geq
    \int_\Omega
    \psi (x) \, u_o (x) \d{x}
    -
    \int_\Omega u (0+,x) \, \psi (x) \d{x}.
  \end{displaymath}
  Hence,
  \begin{displaymath}
    % \label{eq:ognik}
    \int_\Omega
    \psi (x) \left( u_o (x) - u (0+,x) \right) \d{x}
    = 0
    \quad \mbox{ for all } \quad
    \psi \in \Cc2 (\reali^n; \reali^+).
  \end{displaymath}
  We then obtain that
  \begin{eqnarray*}
    0
    & = &
    \int_\Omega
    \psi (x) \left(u_o (x) - u (0+,x)\right)  \d{x}
    \\
    & = &
    \int_\Omega
    \psi (x)
    \left(
      u_o (x) - \lim_{t \to 0+} \frac{1}{t}\int_0^t u (\tau,x)
      \d{\tau}
    \right)  \d{x}
    \\
    & = &
    \lim_{t \to 0+} \frac{1}{t}
    \int_\Omega
    \int_0^t
    \psi (x)
    \left(
      u_o (x) -  u (\tau,x)
    \right) \d{\tau} \d{x}
    \\
    & = &
    \lim_{t \to 0+} \frac{1}{t}
    \int_0^t
    \int_\Omega
    \psi (x)
    \left(
      u_o (x) -  u (\tau,x)
    \right) \d{x} \d{\tau}  \,.
  \end{eqnarray*}
  Therefore, there exists a set $\mathcal{E} \subset I$ with measure
  $0$ such that
  \begin{displaymath}
    \lim_{t \to 0+,\, t \in I\setminus \mathcal{E}}
    \int_\Omega
    \psi (x)
    \left(
      u_o (x) -  u (t,x)
    \right)  \d{x}
    =
    0
  \end{displaymath}
  and, by the arbitrariness of $\psi$, the proof is completed.
\end{proofof}

\begin{proofof}{Proposition~\ref{prop:min}}
  Let $\Psi \in \Cc2 (]0,T[ \times \reali^n; \reali^+)$ and $\Phi_h$
  as in~\eqref{eq:11}.  Write~\eqref{eq:4} with $\phi (t,x) = \Psi
  (t,x) \, \Phi_h (x) $ and take the limit as $h \to 0$. For all $k
  \in \reali$:
  \begin{eqnarray*}
    \lim_{h\to 0}
    \int_I \!\!\int_\Omega
    \modulo{u (t,x) -k} \, \partial_t\Psi (t,x) \, \Phi_h (x)
    \d{x} \d{t}
    & = &
    0;
    \\
    \lim_{h\to 0}
    \int_I \!\!\int_\Omega
    \sgn (u (t,x) -k) \,
    \left(f (t,x,u) - f (t,x,k)\right) \cdot \nabla \Psi (t,x) \Phi_h (x)
    \d{x} \d{t}
    & = &
    0;
    \\
    \lim_{h\to 0}
    \int_I \!\!\int_\Omega
    \sgn (u (t,x) -k) \,
    \left(f (t,x,u) - f (t,x,k)\right) \cdot \nabla \Phi_h (x) \, \Psi (t,x)
    \d{x} \d{t}
    & = &
    \\
    \int_I \!\! \int_{\partial\Omega}
    \sgn\left(\tr u (t,\xi) - k\right)
    \left(f \left(t,\xi,\tr u (t,\xi)\right) - f (t,\xi,k)\right)
    \cdot \nu(\xi) \, \Psi (t,\xi)
    \d{\xi} \d{t}
    \\
    \lim_{h\to 0}
    \int_I \!\! \int_{\Omega}
    \sgn (u (t,x) -k) \,
    \left( F (t,x,u) - \div f (t,x,k) \right)\, \Psi (t,x) \, \Phi_h (x)
    \d{x} \d{t}
    & = &
    0;
    \\
    \lim_{h\to 0}
    \int_\Omega
    \Psi (0,x) \Phi_h (x) \modulo{u_o (x) - k} \d{x}
    & = &
    0;
    \\
    \lim_{h\to 0}
    \int_I \!\!\int_{\partial\Omega}  \sgn (u_b (t,\xi) - k)
    \left[
      f\left(t,\xi,\tr u (t,\xi)\right) - f (t,\xi,k)
    \right] \cdot \nu (\xi) \,
    \Psi (t,\xi) \, \Phi_h (\xi)
    \d{\xi} \d{t}
    \\
    =
    \int_I \!\!\int_{\partial\Omega}  \sgn (u_b (t,\xi) - k)
    \left(
      f\left(t,\xi,\tr u (t,\xi)\right) - f (t,\xi,k)
    \right) \cdot \nu (\xi) \,
    \Psi (t,\xi)
    \d{\xi} \d{t}\,,
  \end{eqnarray*}
  where we used the Dominated Convergence Theorem,
  Lemma~\ref{lem:uffaMultiD} and Lemma~\ref{lem:tr3}. The latter Lemma
  can be used since the function $(u,k) \to \sgn (u-k) \left(f (t,x,u)
    - f (t,x,k)\right)$ is Lipschitz continuous
  by~\cite[Lemma~3]{Kruzkov}. Therefore,
  \begin{align*}
    \int_I \! \int_{\partial\Omega} \left[ \sgn\left( \tr u(t,\xi) - k
      \right) - \sgn\left( u_b (t,\xi) - k \right) \right] \left[
      f\left(t,\xi, \tr u (t,\xi)\right) - f\left(t,\xi,k\right)
    \right] &
    \\
    \cdot \nu (\xi) \, \Psi (t,x) \d\xi \d{t} & \geq 0.
  \end{align*}
  Hence,
  \begin{equation}
    \label{eq:viva}
    \left[
      \sgn\left( \tr u(t,\xi) - k \right) -
      \sgn\left( u_b (t,\xi) - k \right)
    \right]
    \left[
      f\left(t,\xi, \tr u (t,\xi)\right) - f\left(t,\xi,k\right)
    \right] \cdot \nu (\xi) \geq 0
  \end{equation}
  almost everywhere on $]0,T[ \times \partial\Omega$ for all $k \in
  \reali$.  Inequality~\eqref{eq:viva} is reduced to~\eqref{eq:alive}
  by taking $k$ in the interval $\mathcal{I} (t,\xi)$.
\end{proofof}

\begin{proofof}{Proposition~\ref{prop:equi}}
  Let $u$ satisfy Definition~\ref{def:e_sol}. Then, choose for
  instance $\mathcal{E}_m (u) = \sqrt{\frac{1}{m} + (u-k)^2}$ for $k
  \in \reali$ and $m \in \naturali$. The entropy flux is then defined
  by~3.~in Definition~\ref{def:pair}. A standard limiting procedure
  allows to obtain~\eqref{eq:4} in the limit $m \to +\infty$.

  Conversely, let $u$ solve~\eqref{eq:1} in the sense of
  Definition~\ref{def:sol} and assume that $\norma{u}_{\L\infty
    (I\times\Omega;\reali)} \leq M$. Then, clearly, $u$
  satisfies~\eqref{eq:dis_entro} with $\mathcal{E} (u) = \alpha
  \modulo{u-k} + \beta$, for any $\alpha>0$ and $k,\beta \in
  \reali$. Further, note that if $u$ satisfies~\eqref{eq:dis_entro}
  with two pairs $(\mathcal{E}_1,\mathcal{F}_1)$ and $(\mathcal{E}_2,
  \mathcal{F}_2)$ (for continuous maps $\mathcal{E}_1, \mathcal{E}_2,
  \mathcal{F}_1, \mathcal{F}_2)$, then it satisfies the same
  inequality also with respect to $(\mathcal{E}_1+\mathcal{E}_2,
  \mathcal{F}_1+\mathcal{F}_2)$. Inductively, $u$
  satisfies~\eqref{eq:dis_entro} for any pair
  $(\mathcal{E},\mathcal{F})$ with $\mathcal{E}$ piecewise linear and
  continuous on $[-M,M]$. Remark also that if $u$
  satisfies~\eqref{eq:dis_entro} with respect to the continuous pairs
  $(\mathcal{E}_n, \mathcal{F}_n)$ and the $\mathcal{E}_n$ are
  uniformly convergent to $\mathcal{E}$ on $[-M,M]$, then $u$
  satisfies~\eqref{eq:dis_entro} also with respect to the pair
  $(\mathcal{E},\mathcal{F}$), where $\mathcal{F}$ is given by~3.~in
  Definition~\ref{def:e_sol}. Finally, since any convex entropy
  $\mathcal{E}$ is the uniform limit on $[-M,M]$ of piecewise linear
  and continuous functions, we obtain the proof.
\end{proofof}

\begin{proofof}{Theorem~\ref{thm:Dream}}
  The proof consists in regularizing the initial datum through a
  sequence $u_o^m$. Applying Theorem~\ref{thm:estConComp}, we have a
  sequence of solutions $u_m$. Theorem~\ref{thm:wp} allows to prove
  that $u_m$ satisfies the Cauchy condition, hence converges to a map
  $u$, which is proved to solve~\eqref{eq:1}.

  To approximate the initial datum, using~\cite[Formula~(1.8) and
  Proposition~1.15]{GiustiBook} introduce a sequence $\tilde u_m \in
  \C\infty (\Omega;\reali)$ such that
  \begin{displaymath}
    \lim_{m\to +\infty} \norma{u_o - \tilde u_m}_{\L1 (\Omega;\reali)}=0
    \,, \quad
    \norma{\tilde u_m}_{\L\infty (\Omega;\reali)}
    \leq
    \norma{u_o}_{\L\infty (\Omega;\reali)}
    \quad \mbox{ and }
    \lim_{m\to+\infty} \tv (\tilde u_m) = \tv (u_o) \,.
  \end{displaymath}
  Define now $\Psi_m = 1-\Phi_{1/m}$, with $\Phi_{1/m}$ as
  in~\eqref{eq:11}. Let
  \begin{equation}
    \label{eq:def_um}
    u_o^m (x) = \Psi_m (x) \; \tilde u_m (x)
    \qquad \mbox{ for all } \quad x \in \Omega\,.
  \end{equation}
  By construction, $\lim_{m \to +\infty} \norma{u_o^m-u_o}_{\L1
    (\Omega;\reali)}=0$, so that $u_o^m$ is a Cauchy sequence in $\L1
  (\Omega;\reali)$. We have also the uniform bounds
  \begin{eqnarray}
    \label{eq:qq1}
    \norma{u_o^m}_{\L\infty (\Omega;\reali)}
    & \leq &
    \norma{u_o}_{\L\infty (\Omega;\reali)} \,;
    \\
    \nonumber
    \tv (u_o^m)
    % & = &
    % \norma{\grad u_o^m}_{\L1 (\Omega;\reali)}
    % \\
    & \leq &
    \norma{\grad \Psi_m}_{\L1 (\Omega;\reali^n)}
    \;
    \norma{\tilde u_m}_{\L\infty (\Omega;\reali)}
    +
    \norma{\Psi_m}_{\L\infty (\Omega;\reali)}
    \;
    \norma{\grad \tilde u_m}_{\L1 (\Omega;\reali^n)}
    \\
    \label{eq:qq2}
    & \leq &
    \O \, \norma{u_o}_{\L\infty (\Omega;\reali)}
    +
    \tv (u_o) \,.
  \end{eqnarray}

  Since for any $m \in \naturali \setminus \{0\}$ we have that $u_o^m
  (\xi) = u_b (0,\xi) = 0$, Theorem~\ref{thm:estConComp} applies
  to~\eqref{eq:1} with initial datum $u_o^m$ and boundary datum $u_b$,
  yielding the existence of a solution $u_m$ to~\eqref{eq:1} in the
  sense of Definition~\ref{def:sol}, which satisfies the
  estimates~\eqref{eq:L_bound}, \eqref{eq:tv_bound}
  and~\eqref{eq:t_Lip}. Theorem~\ref{thm:wp} then implies that
  \begin{eqnarray*}
    \int_{\Omega} \modulo{u_{m'}(t,x) - u_{m''}(t,x)} \d{x}
    \leq
    e^{L_F \, t} \int_{\Omega} \modulo{u_o^{m'}(x) - u_o^{m''}(x)} \d{x} \,,
  \end{eqnarray*}
  proving that the sequence $u_m (t)$ satisfies the Cauchy condition
  in $\L1 (\Omega; \reali)$ uniformly in $t \in I$.

  Call $u = \lim_{m\to\infty} u_m$. We now verify that $u$
  solves~\eqref{eq:1}. By Proposition~\ref{prop:equi}, each $u_m$
  satisfies~\eqref{eq:dis_entro} for any $\C2$ entropy $\mathcal{E}_l
  (u) = \sqrt{\frac{1}{l} + (u-k)^2}$ and for any $\phi \in \Cc2
  (\left]-\infty, T\right[\times \reali^n; \reali^+)$:
  \begin{displaymath}
    \!\!\!\!\!\!
    \begin{array}{@{}r@{\,}l@{}}
      \displaystyle
      \int_I \int_\Omega
      \Big\{
      \mathcal{E}_l\left(u_m (t,x)\right) \partial_t \phi (t,x)
      +
      \mathcal{F}_l\left(t,x,u_m (t,x)\right) \cdot \grad \phi (t,x)
      \\
      +
      \left[
        \mathcal{E}_l'\left(u_m (t,x)\right)
        \left(
          F\left(t,x,u_m (t,x)\right)
          -
          \div f \left(t,x,u_m (t,x)\right)
        \right)
        +
        \div\mathcal{F}_l \left(t,x,u_m (t,x)\right)
      \right]
      \phi (t,x)
      \Big\}
      \d{x} \d{t}
      \\
      \displaystyle
      +
      \int_\Omega \mathcal{E}_l\left(u_o^m (x)\right) \; \phi (0,x)\d{x}
      \\
      \displaystyle
      -
      \int_I \int_{\partial\Omega}
      \left[
        \mathcal{F}_l\left(t, \xi, u_b (t,\xi)\right)
        -
        \mathcal{E}_l'\left(u_b (t,\xi)\right)
        \left(
          f\left(t,\xi,u_b (t,\xi)\right)-
          f\left(t, \xi, \tr u_m (t,\xi)\right)
        \right)
      \right]
      \!\!\cdot \nu (\xi) \phi (t,\xi)
      \d\xi \d{t}
      & \geq 0.
    \end{array}
    \!\!\!\!\!\!\!\!\!
  \end{displaymath}
  In the limit $m\to+\infty$, since $u_m$ converges in $\L1$ to $u$,
  $\tr u_m$ converges to $\tr u$ by Lemma~\ref{lem:tr2}, which can be
  applied thanks to the estimate~\eqref{eq:tv_bound}. Hence, we have
  \begin{displaymath}
    \!\!\!\!\!\!
    \begin{array}{@{}r@{\,}l@{}}
      \displaystyle
      \int_I \int_\Omega
      \Big\{
      \mathcal{E}_l\left(u (t,x)\right) \partial_t \phi (t,x)
      +
      \mathcal{F}_l\left(t,x,u (t,x)\right) \cdot \grad \phi (t,x)
      \\
      +
      \left[
        \mathcal{E}_l'\left(u (t,x)\right)
        \left(
          F\left(t,x,u (t,x)\right)
          -
          \div f \left(t,x,u (t,x)\right)
        \right)
        +\div \mathcal{F}_l\left(t,x,u (t,x)\right)
      \right]
      \phi (t,x)
      \Big\}
      \d{x} \d{t}
      \\
      \displaystyle
      -
      \int_I \int_{\partial\Omega}
      \left[
        \mathcal{F}_l\left(t, \xi, u_b (t,\xi)\right)
        -
        \mathcal{E}_l'\left(u_b (t,\xi)\right)
        \left(
          f\left(t,\xi,u_b (t,\xi)\right)-
          f\left(t, \xi, \tr u (t,\xi)\right)
        \right)
      \right]
      \!\!\cdot \nu (\xi) \phi (t,\xi)
      \d\xi \d{t}
      & \geq 0.
    \end{array}
    \!\!\!\!\!\!\!\!\!
  \end{displaymath}
  In the limit $l \to +\infty$, we have the convergences
  $\mathcal{E}_l \to \mathcal{E}$ and $\mathcal{F}_l\to \mathcal{F}$,
  with $\mathcal{E} (u) = \modulo{u-k}$ and $\mathcal{F} (t,x,u) =
  \sgn (u-k) \left(f (t,x,u) - f (t,x,k)\right)$, so that
  \begin{displaymath}
    \!\!\!\!
    \begin{array}{@{}rcl@{}}
      \displaystyle
      \int_I \!\int_\Omega
      \Big\{
      \modulo{u (t,x) -k} \, \partial_t\phi (t,x)
      +
      \sgn (u (t,x) -k) \,
      \left(f (t,x,u) - f (t,x,k)\right) \cdot \grad \phi (t,x)
      \\
      \displaystyle
      +
      \sgn (u (t,x) -k) \,
      \left( F (t,x,u) - \div f (t,x,k) \right)\, \phi (t,x)
      \Big\} \d{x} \d{t}
      \\
      \displaystyle
      +
      \int_\Omega
      \modulo{u (0,x) - k} \; \phi (0,x) \d{x}
      \\
      \displaystyle
      -
      \int_I \!\int_{\partial\Omega}  \sgn (u_b (t,\xi) - k)
      \left(
        f\left(t,\xi,\left(\tr u\right) (t,\xi)\right) - f (t,\xi,k)
      \right) \cdot \nu (\xi) \,
      \phi (t,\xi) \,
      \d{\xi} \d{t}
      & \geq & 0.
    \end{array}
    \!\!\!\!\!\!\!\!\!
  \end{displaymath}
  Finally, observe that
  \begin{displaymath}
    \begin{array}{@{}r@{\;}c@{\;}ll@{}}
      \displaystyle
      \int_\Omega
      \modulo{u (0,x) - k} \; \phi (0,x) \d{x}
      & \leq &
      \displaystyle
      \int_\Omega
      \modulo{u_o (x) - k} \; \phi (0,x) \d{x}
      \\
      & &
      \displaystyle
      +
      \int_\Omega
      \modulo{u_o (x) -  u_o^m (x)} \; \phi (0,x) \d{x}
      & \stackrel{m\to+\infty}{\to} 0 \quad
      \mbox{by~\eqref{eq:def_um}}
      \\[8pt]
      & &
      \displaystyle
      +
      \int_\Omega
      \modulo{u_o^m (x) -  u_m (0,x)} \; \phi (0,x) \d{x}
      & \stackrel{m\to+\infty}{\to} 0 \quad
      \mbox{by Proposition~\ref{prop:sol0}}
      \\[8pt]
      & &
      \displaystyle
      +
      \int_\Omega
      \modulo{u_m (0,x) -  u(0,x)} \; \phi (0,x) \d{x}
      & \stackrel{m\to+\infty}{\to} 0 \quad
      \mbox{since } u_m \to u \mbox{ in } \L1,
    \end{array}
  \end{displaymath}
  concluding the existence proof.

  The bounds directly follow from~\eqref{eq:L_bound},
  \eqref{eq:tv_bound} and~\eqref{eq:t_Lip}, thanks to the
  properties~\eqref{eq:qq1} and~\eqref{eq:qq2} of the sequence
  $u_o^m$.
\end{proofof}

\appendix
\section{Appendix: The Trace Operator}

A relevant role is played by the trace operator which we recall here
from~\cite[Paragraph~5.3]{EvansGariepy}.

\begin{definition}
  \label{def:tr}
  Let $A \subset \reali^n$ be bounded with Lipschitz boundary. The
  trace operator is the map $\tr_A \colon \BV (A; \reali) \to \L1
  (\partial A; \reali)$ such that for all $\phi \in \C1 (\reali^n;
  \reali^n)$ and for all $w \in \BV (A; \reali)$,
  \begin{displaymath}
    \int_{\partial A}
    \left((\tr_A w )(\xi)\right) \phi (\xi) \cdot \nu (\xi) \d\xi
    % \d{\left({\cal H}^{n-1} (\xi)\right)}
    =
    \int_A w (x) \, \div \phi (x) \d{x}
    +
    \int_A \phi (x) \d{\left(\nabla w (x)\right)} \,.
  \end{displaymath}
  % ${\cal H}^{n-1}$ is the $(n-1)$-dimensional Hausdorff measure and
  % $\nu (\xi)$ is the outer normal at $\xi$ to $A$.
\end{definition}

Below, when no misunderstanding arises, we omit the dependence of the
trace operator from the set. First of all, we recall without proof the
following two lemmas.

\begin{lemma}[{\cite[Paragraph~5.3, Theorem~1 and
    Theorem~2]{EvansGariepy}}]
  \label{lem:tr2}
  Let $A \subset \reali^n$ be bounded with Lipschitz boundary. Fix $w
  \in \BV (A; \reali)$. Then, the trace operator is a bounded linear
  operator and for ${\cal H}^{n-1}$-a.e.~$\xi \in \partial A$,
  \begin{displaymath}
    \lim_{r \to 0+}
    \frac{1}{{\cal L}^n\left(B (\xi,r) \cap A\right)}
    \int_{B (\xi,r) \cap A} \modulo{w (x) - (\tr w) (\xi)} \d{x} = 0 \,.
  \end{displaymath}
\end{lemma}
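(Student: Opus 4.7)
The plan is to reconstruct the standard proof from $\BV$ trace theory (Evans-Gariepy, Paragraph~5.3), proceeding in three steps: localization, construction of the trace and its boundedness, and the pointwise convergence property. First, via a partition of unity subordinate to a finite cover of $\partial A$ by coordinate charts straightening the Lipschitz boundary, and using the bi-Lipschitz invariance of the $\BV$ norm (the requisite pull-back bounds following from the chain rule on $\BV$), I reduce to the model situation in which $A$ is a half-ball $B^+ = B (0,1) \cap \{x_n > 0\}$ and $\Gamma = B (0,1) \cap \{x_n = 0\}$ is the flat part of its boundary.

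Second, I would define the trace via boundary slices. By the $\BV$ slicing theorem, for $\mathcal{H}^{n-1}$-a.e.~$\xi \in \Gamma$ the slice $t \mapsto w (\xi + t e_n)$ lies in $\BV (0,1)$ and thus admits a right limit at $t = 0+$; take this limit as $(\tr w) (\xi)$. Approximating $w$ in the area-strict topology by $\C\infty \cap \BV$ functions and invoking the fundamental theorem of calculus in the $e_n$ direction combined with Fubini yields the measurability of $\tr w$ and the boundedness estimate
\begin{displaymath}
  \norma{\tr w}_{\L1 (\Gamma)}
  \leq
  C \left( \norma{w}_{\L1 (B^+)} + \modulo{Dw} (B^+) \right).
\end{displaymath}
The integration-by-parts identity of Definition~\ref{def:tr} then follows by passing to the limit in the corresponding identity for the smooth approximations.

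Third, for the pointwise $\L1$ property I would combine a boundary $\BV$ Poincaré inequality with differentiation theorems for Radon measures. For $\xi \in \Gamma$ and small $r > 0$,
\begin{displaymath}
  \int_{B (\xi,r) \cap B^+} \modulo{w (x) - (\tr w) (\xi)} \d{x}
  \leq
  C\, r\, \modulo{Dw} (B (\xi, 2r) \cap B^+)
  + C\, r \int_{B (\xi,r) \cap \Gamma} \modulo{(\tr w) (\eta) - (\tr w) (\xi)} \d\eta,
\end{displaymath}
which I prove first for smooth $w$ via $w (y', y_n) - w (\eta', 0) = \int_0^{y_n} \partial_n w (y', s)\,\d{s}$ followed by averaging over $\eta' \in B (\xi', r) \cap \Gamma$, and then extend to $\BV$ by the strict density in step two. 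Dividing by $\L^n (B (\xi,r) \cap B^+) \sim r^n$, the second term vanishes as $r \to 0+$ at every Lebesgue point of $\tr w \in \L1 (\Gamma)$ (i.e.~for $\mathcal{H}^{n-1}$-a.e.~$\xi$), while the first term is of order $r^{1-n} \modulo{Dw} (B (\xi, 2r) \cap B^+)$.

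The main obstacle is showing that this last quantity vanishes for $\mathcal{H}^{n-1}$-a.e.~$\xi \in \Gamma$, since $\modulo{Dw}$ is a finite Radon measure on $B^+$ that may a priori concentrate near $\Gamma$. The resolution is a Fubini--maximal-function argument: for any $\delta > 0$, split $\modulo{Dw} = \modulo{Dw}\llcorner\{x_n > \delta\} + \modulo{Dw}\llcorner\{0 < x_n \leq \delta\}$. The first piece contributes zero to the density at every $\xi \in \Gamma$ for $r < \delta$; the second has total mass tending to $\modulo{Dw}(\bigcap_\delta \{x_n \leq \delta\} \cap B^+) = 0$ as $\delta \to 0$, and a Hardy-Littlewood-type weak-type $(1,1)$ estimate on the upper $(n-1)$-density of a Radon measure (Evans-Gariepy, Paragraph~5.3, Lemma~1) then forces the set of $\xi$ where the upper density exceeds any $\lambda > 0$ to have $\mathcal{H}^{n-1}$-measure $\to 0$, yielding the claim.
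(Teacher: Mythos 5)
The paper does not actually prove this lemma: it is recalled \emph{without proof}, with a citation to Evans--Gariepy, Paragraph~5.3, Theorems~1 and~2, so the only benchmark is the textbook argument you are reconstructing --- and your reconstruction is essentially correct. Two remarks on the route. First, Evans--Gariepy build the trace as the $\L1(\partial A)$ limit of the restrictions of mollified/translated copies of $w$ (equivalently, via averages over half-balls), whereas you define it through one-dimensional slices $t \mapsto w(\xi + t e_n)$; this is legitimate, but it silently requires the standard consistency step that the slice limits give a measurable $\L1(\Gamma)$ function which satisfies the Gauss--Green identity of Definition~\ref{def:tr} --- you correctly delegate this to strict (area-strict) smooth approximation, under which traces converge in $\L1(\Gamma)$. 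Second, you correctly isolate the genuinely delicate point, namely that $r^{1-n}\modulo{Dw}\left(B(\xi,2r)\cap B^+\right) \to 0$ for $\mathcal{H}^{n-1}$-a.e.\ $\xi\in\Gamma$ even though $\modulo{Dw}$ may concentrate near $\Gamma$, and your treatment (split $\modulo{Dw}$ according to the distance to $\Gamma$, note that the mass of the near-boundary piece tends to $0$ since $\modulo{Dw}$ does not charge $\Gamma$, then apply the weak-type upper-density comparison lemma for Radon measures) is exactly the covering argument used in the textbook proof. Minor imprecisions, none of which are gaps: the identity $w(y',y_n)-w(\eta',0)=\int_0^{y_n}\partial_n w(y',s)\d{s}$ is literally valid only for $\eta'=y'$, so the Poincar\'e-type inequality should be obtained by integrating along vertical segments and estimating the boundary term separately (or, cleaner, proved with an arbitrary constant $c$ in place of $(\tr w)(\xi)$ and then specialized); and when transferring that inequality from smooth approximants to $w$ one should either enlarge the radius or restrict to radii with $\modulo{Dw}\left(\partial B(\xi,2r)\cap B^+\right)=0$ to pass to the limit in the total-variation term.
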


\begin{lemma}[{\cite[Paragraph~5.3, Remark to
    Theorem~2]{EvansGariepy}}]
  \label{lem:tr1}
  Let $A \subset \reali^n$ be bounded with Lipschitz boundary. Fix $w
  \in \BV (A; \reali) \cap \C0 (\bar{A}; \reali)$. Then, $(\tr w)
  (\xi) = w (\xi)$ for ${\cal H}^{n-1}$-a.e.~$\xi \in \partial A$.
\end{lemma}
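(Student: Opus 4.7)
The plan is to deduce this pointwise coincidence directly from the Lebesgue-type characterization of the trace in Lemma~\ref{lem:tr2}, exploiting the extra continuity hypothesis on $w$ to replace the $\L1$-average by an $\L\infty$-type estimate that is controlled via uniform continuity.

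First, I would fix a point $\xi\in\partial A$ at which the conclusion of Lemma~\ref{lem:tr2} holds; the set of such $\xi$ has full $\mathcal{H}^{n-1}$-measure on $\partial A$. Since $A$ has Lipschitz boundary, there is an interior cone condition at $\xi$, which guarantees that $\mathcal{L}^n(B(\xi,r)\cap A)>0$ for all sufficiently small $r>0$ and, more importantly, that $\mathcal{L}^n(B(\xi,r)\cap A)$ is comparable to $r^n$ as $r\to 0+$, so none of the averages below are degenerate.

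Next, fix $\epsilon>0$. Since $w\in\C0(\bar A;\reali)$ and $\bar A$ is compact, $w$ is uniformly continuous on $\bar A$, so there exists $r_0>0$ such that $\modulo{w(x)-w(\xi)}<\epsilon$ for every $x\in B(\xi,r_0)\cap\bar A$. Then, for every $r\in(0,r_0]$, the triangle inequality yields
\begin{equation*}
\modulo{(\tr w)(\xi)-w(\xi)}
\leq
\frac{1}{\mathcal{L}^n(B(\xi,r)\cap A)}\int_{B(\xi,r)\cap A}\modulo{w(x)-(\tr w)(\xi)}\d{x}
+\frac{1}{\mathcal{L}^n(B(\xi,r)\cap A)}\int_{B(\xi,r)\cap A}\modulo{w(x)-w(\xi)}\d{x}.
\end{equation*}
The first term on the right-hand side tends to $0$ as $r\to 0+$ by the choice of $\xi$ and Lemma~\ref{lem:tr2}, while the second is bounded by $\epsilon$ for all $r\in(0,r_0]$. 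Letting $r\to 0+$ gives $\modulo{(\tr w)(\xi)-w(\xi)}\leq\epsilon$, and since $\epsilon$ was arbitrary, $(\tr w)(\xi)=w(\xi)$.

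I do not foresee a serious obstacle here: the only delicate point is ensuring that the averages in Lemma~\ref{lem:tr2} are well defined in a quantitative sense as $r\to 0+$, which is precisely what the Lipschitz-boundary hypothesis on $A$ supplies via the interior cone condition. Continuity on the closure of $A$ does the rest of the work for free.
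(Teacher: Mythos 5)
Your argument is correct. Note, however, that the paper does not prove Lemma~\ref{lem:tr1} at all: it is recalled without proof from the reference cited in its statement, so there is no ``paper proof'' to compare against. Your deduction from Lemma~\ref{lem:tr2} is the standard one and is sound: for an $\mathcal{H}^{n-1}$-a.e.\ point $\xi$ where the Lebesgue-type property of the trace holds, you average the constant $\modulo{(\tr w)(\xi)-w(\xi)}$ over $B(\xi,r)\cap A$, split by the triangle inequality, and control one piece by Lemma~\ref{lem:tr2} and the other by uniform continuity of $w$ on the compact set $\bar A$. The only geometric input you need beyond Lemma~\ref{lem:tr2} is that $\mathcal{L}^n\left(B(\xi,r)\cap A\right)>0$ for all small $r$, which the Lipschitz boundary indeed guarantees; the finer comparability $\mathcal{L}^n\left(B(\xi,r)\cap A\right)\sim r^n$ that you invoke is not actually needed. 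It is worth observing that your averaging scheme is precisely the technique the paper itself employs in its proof of Lemma~\ref{lem:tr3}, so your argument fits naturally alongside the paper's treatment of the trace operator.
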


\noindent Recall also the following property.
\begin{lemma}
  \label{lem:tr3}
  Let $A \subset \reali^n$ be bounded with Lipschitz boundary. Fix $w
  \in \BV (A; \reali)$ and $h \in \C{0,1} (\reali; \reali)$. Then,
  $\tr (h \circ w) = h \circ (\tr w)$ for ${\cal H}^{n-1}$-a.e.~$\xi
  \in \partial A$.
\end{lemma}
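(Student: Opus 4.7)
The plan is to reduce the claim to the Lebesgue-type characterization of the trace given by Lemma~\ref{lem:tr2}, exploiting the Lipschitz estimate on $h$.

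First I would observe that $h \circ w \in \BV(A;\reali)$ whenever $w \in \BV(A;\reali)$ and $h$ is Lipschitz: indeed, if $L$ is the Lipschitz constant of $h$, then for every smooth approximation $w_k \to w$ in $\L1$ with $\int_A \norma{\grad w_k} \to \tv(w)$, one has $\int_A \norma{\grad(h \circ w_k)} \leq L \int_A \norma{\grad w_k}$, and $h \circ w_k \to h \circ w$ in $\L1(A;\reali)$ by the Lipschitz bound. By lower semicontinuity of the total variation, $\tv(h\circ w) \leq L \, \tv(w) < \infty$. In particular, the trace $\tr(h \circ w)$ is a well-defined element of $\L1(\partial A;\reali)$.

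Next I would fix a point $\xi \in \partial A$ at which both the Lebesgue-type property of Lemma~\ref{lem:tr2} applies to $w$ and to $h\circ w$ simultaneously; the set of such $\xi$ has full $\mathcal{H}^{n-1}$-measure on $\partial A$. At such a point, Lemma~\ref{lem:tr2} yields
\begin{displaymath}
  \lim_{r\to 0+} \frac{1}{\mathcal{L}^n(B(\xi,r)\cap A)} \int_{B(\xi,r)\cap A} \modulo{w(x) - (\tr w)(\xi)} \d{x} = 0.
\end{displaymath}
Using $\modulo{h(w(x)) - h\bigl((\tr w)(\xi)\bigr)} \leq L \modulo{w(x) - (\tr w)(\xi)}$, I would then obtain
\begin{displaymath}
  \lim_{r\to 0+} \frac{1}{\mathcal{L}^n(B(\xi,r)\cap A)} \int_{B(\xi,r)\cap A} \modulo{h(w(x)) - h\bigl((\tr w)(\xi)\bigr)} \d{x} = 0.
\end{displaymath}

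The final step is uniqueness: the value $c = (\tr(h\circ w))(\xi)$ provided by Lemma~\ref{lem:tr2} also satisfies the analogous Lebesgue limit at $\xi$, with $c$ in place of $h((\tr w)(\xi))$. Subtracting the two limits and using the triangle inequality
\begin{displaymath}
  \modulo{c - h\bigl((\tr w)(\xi)\bigr)} \leq \modulo{h(w(x))-c} + \modulo{h(w(x)) - h\bigl((\tr w)(\xi)\bigr)}
\end{displaymath}
and averaging over $B(\xi,r)\cap A$, I conclude $c = h((\tr w)(\xi))$, which is exactly the claim. The argument is entirely elementary once $h \circ w \in \BV$ is known, so I expect no real obstacle; the only point to keep in mind is the $\mathcal{H}^{n-1}$-negligible exceptional set, which is handled by intersecting the two full-measure sets coming from Lemma~\ref{lem:tr2}.
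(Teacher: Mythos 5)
Your proof is correct and follows essentially the same route as the paper: both arguments fix an $\mathcal{H}^{n-1}$-a.e.\ point where Lemma~\ref{lem:tr2} applies to $w$ and to $h\circ w$, average a triangle inequality over $B(\xi,r)\cap A$, and use the Lipschitz bound on $h$ to reduce one term to the Lebesgue-type limit for $w$. The only difference is that you also spell out why $h\circ w\in\BV(A;\reali)$, a point the paper leaves implicit; this is a harmless (and welcome) addition, not a change of method.
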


\begin{proof}
  For any $\xi \in \partial A$ and for $r>0$ sufficiently small,
  compute:
  \begin{eqnarray*}
    & &
    \modulo{\left(\tr (h \circ w)\right) (\xi)
      -
      \left(h \circ (\tr w)\right) (\xi)}
    \\
    & \leq &
    \frac{1}{{\cal L}^n\left(B (\xi,r) \cap A\right)}
    \int_{B (\xi,r) \cap A}
    \modulo{(h \circ w) (x) - \left(\tr (h\circ w) (\xi)\right)}
    \d{x}
    \\
    & &
    +
    \frac{1}{{\cal L}^n\left(B (\xi,r) \cap A\right)}
    \int_{B (\xi,r) \cap A}
    \modulo{(h \circ w) (x) - h\left((\tr w) (\xi)\right)} \d{x}
    \\
    & \leq &
    \frac{1}{{\cal L}^n\left(B (\xi,r) \cap A\right)}
    \int_{B (\xi,r) \cap A}
    \modulo{(h \circ w) (x) - \left(\tr (h\circ w) (\xi)\right)}
    \d{x}
    \\
    & &
    +
    \frac{\Lip (h)}{{\cal L}^n\left(B (\xi,r) \cap A\right)}
    \int_{B (\xi,r) \cap A}
    \modulo{w (x) - (\tr w) (\xi)} \d{x}
  \end{eqnarray*}
  and both addends in the right hand side above vanish as $r \to 0+$
  by Lemma~\ref{lem:tr2}.
\end{proof}

The next two Lemmas relate the values attained by the trace of $u$
with limits at the boundary of integrals of $u$.

\begin{lemma}
  \label{lem:uffa1}
  Let $T>0$ and $u \in \BV ([0,T]; \reali)$. Choose a sequence $\phi_k
  \in \Cc1 ([0,T]; [0,1])$ such that $\phi_k (0) = 1$, $\phi_k (t) =
  0$ for all $t \geq 1/k$ and $\sup_k \frac{1}{k}
  \norma{\phi'_k}_{\L\infty ([0,T];\reali)} < +\infty$.  Then,
  $\displaystyle \lim_{k \to +\infty } \int_I u (t) \, \phi'_k (t)
  \d{t} = - u (0+)$.
\end{lemma}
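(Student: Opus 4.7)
The plan is to split off the right limit of $u$ at $0$ and use the support of $\phi_k'$.

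First, since $u \in \BV([0,T];\reali)$, the right limit $u(0+)$ exists. Set $v(t) = u(t) - u(0+)$, so that $v$ is bounded and $v(t) \to 0$ as $t \to 0+$. Write
\begin{displaymath}
  \int_0^T u(t)\,\phi_k'(t)\d{t}
  =
  u(0+) \int_0^T \phi_k'(t)\d{t}
  +
  \int_0^T v(t)\,\phi_k'(t)\d{t}.
\end{displaymath}
For the first term, since $\phi_k \in \Cc1([0,T];[0,1])$ with $\phi_k(0)=1$ and $\phi_k(t)=0$ for $t \geq 1/k$, the Fundamental Theorem of Calculus gives $\int_0^T \phi_k'(t)\d{t} = \phi_k(T) - \phi_k(0) = -1$, so this contribution is exactly $-u(0+)$, which is the desired limit.

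It then suffices to show that the error term vanishes as $k \to +\infty$. Since $\phi_k$ is of class $\C1$ and identically zero on $[1/k,T]$, its derivative is supported in $[0,1/k]$. Let $C = \sup_k \frac{1}{k}\norma{\phi'_k}_{\L\infty([0,T];\reali)} < +\infty$, so that $\norma{\phi'_k}_{\L\infty} \leq C\,k$. Then
\begin{displaymath}
  \modulo{\int_0^T v(t)\,\phi_k'(t)\d{t}}
  =
  \modulo{\int_0^{1/k} v(t)\,\phi_k'(t)\d{t}}
  \leq
  C\,k \int_0^{1/k} \modulo{v(t)} \d{t}
  \leq
  C \sup_{t \in (0,1/k]} \modulo{v(t)}.
\end{displaymath}
The last supremum vanishes as $k \to +\infty$ by the definition of $u(0+)$, completing the proof.

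There is no real obstacle: the argument is a one-line integration by parts (in the sense of evaluating $\int \phi_k'$) combined with the defining property $v(0+) = 0$. The only point to watch is to avoid the value $v(0)$, which might not be zero; this is automatic because it does not affect the integral.
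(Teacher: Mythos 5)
Your argument follows the same route as the paper: split off $u(0+)$, use $\int_0^{1/k}\phi_k'(t)\d{t}=\phi_k(1/k)-\phi_k(0)=-1$ to produce the limit value exactly, and bound the remainder on the support $[0,1/k]$ of $\phi_k'$ by means of $\norma{\phi_k'}_{\L\infty}\leq C\,k$. The only point where you diverge is the very last step, and it is the one place where some care is needed. In the paper $u(0+)$ is \emph{defined} as the trace $\tr_{[0,T]}u(0)$ of the $\BV$ function $u$, which is an $\L1$-equivalence class; consequently a pointwise quantity such as $\sup_{t\in(0,1/k]}\modulo{v(t)}$ is not determined by the data (modifying $u$ on a null set accumulating at $0$ changes the supremum but neither the integral nor the trace), so the appeal to ``the definition of $u(0+)$'' does not close the argument as stated. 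The repair is immediate and costs nothing: either stop one inequality earlier, at
\begin{displaymath}
  \modulo{\int_0^{1/k} v(t)\,\phi_k'(t)\d{t}}
  \leq
  C\,k\int_0^{1/k}\modulo{u(t)-u(0+)}\d{t},
\end{displaymath}
and observe that the right hand side vanishes by the trace characterization in Lemma~\ref{lem:tr2} (this is exactly what the paper does); or, equivalently, replace your supremum by $\esssup_{t\in(0,1/k]}\modulo{v(t)}$ after passing to the right-continuous good representative of the one-dimensional $\BV$ class, for which the essential right limit at $0$ exists and coincides with the trace. With either fix your proof is correct; your remark about the value $v(0)$ addresses only the single point $t=0$, not this representative issue on $(0,1/k]$.
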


\noindent Above, we used the standard notation $u (0+) = \tr_{[0,T]} u
(0)$.

\begin{proof}
  Denote $c = \sup_k \frac{1}{k} \, \norma{\phi'_k}_{\L\infty
    ([0,T];\reali^n)}$. Compute:
  \begin{eqnarray*}
    % & &
    \modulo{\int_I u (t) \, \phi'_k (t) \d{t}
      +
      u (0+)}
    % \\
    & = &
    \modulo{\int_0^{1/k} u (t) \, \phi'_k (t) \d{t}
      +
      u (0+)}
    \\
    & \leq &
    \int_0^{1/k} \!\! \modulo{u (t) - u (0+)} \modulo{\phi'_k (t)} \d{t}
    +
    \modulo{u (0+) \int_0^{1/k} \!\! \phi'_k (t) \d{t} + u (0+)}
    \\
    & \leq &
    \frac{c}{1/k} \int_0^{1/k} \modulo{u (t) - u (0+)} \d{t}
  \end{eqnarray*}
  which vanishes as $k \to +\infty$ by Lemma~\ref{lem:tr2}.
\end{proof}

\begin{lemma}
  \label{lem:uffaMultiD}
  Let $\Omega$ satisfy~{\bf($\boldsymbol{\Omega_{2,0}}$)} and $u \in
  \BV (\Omega; \reali)$. Choose a sequence $\chi_k \in \Cc1
  (\reali^n;[0,1])$ such that $\chi_k (\xi) = 1$ for all $\xi
  \in \partial\Omega$, $\chi_k (x) = 0$ for all $x \in \Omega$ with $B
  (x,1/k) \subseteq \Omega$ and moreover $\sup_k \frac{1}{k }
  \norma{\nabla \chi_k}_{\L\infty (\Omega;\reali^n)} < +\infty$.
  Then, $\displaystyle \lim_{k \to +\infty} \int_\Omega u (x) \grad
  \chi_k (x) \d{x} = \int_{\partial\Omega} \tr_\Omega u (\xi) \; \nu
  (\xi) \d\xi$. % \d{\mathcal{H}^{n-1} (\xi)}$.
\end{lemma}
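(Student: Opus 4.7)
The plan is to apply the definition of the trace operator (Definition~\ref{def:tr}) componentwise with a test vector field built directly from $\chi_k$. Fix $i \in \{1,\ldots,n\}$ and consider the vector field $\phi_k(x) = \chi_k(x)\,\mathbf{e}_i \in \C1(\reali^n;\reali^n)$, where $\mathbf{e}_i$ denotes the $i$-th element of the canonical basis of $\reali^n$. Substituting this into Definition~\ref{def:tr} with $A = \Omega$ and $w = u$, and using that $\chi_k(\xi) = 1$ for every $\xi \in \partial\Omega$, I obtain
\begin{equation*}
  \int_{\partial\Omega} \tr_\Omega u(\xi)\,\nu_i(\xi)\d\xi
  =
  \int_\Omega u(x)\,\partial_i \chi_k(x)\d{x}
  +
  \int_\Omega \chi_k(x)\d(\partial_i u)(x).
\end{equation*}
Rearranging gives
\begin{equation*}
  \int_\Omega u(x)\,\partial_i \chi_k(x)\d{x}
  =
  \int_{\partial\Omega} \tr_\Omega u(\xi)\,\nu_i(\xi)\d\xi
  -
  \int_\Omega \chi_k(x)\d(\partial_i u)(x),
\end{equation*}
so the conclusion reduces to showing that the last integral vanishes as $k \to +\infty$.

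For this, I would invoke the Dominated Convergence Theorem with respect to the finite signed Radon measure $\partial_i u$ on the open set $\Omega$. The uniform bound $|\chi_k(x)| \leq 1$ provides the dominating function, since $|\partial_i u|(\Omega) \leq \tv(u) < +\infty$. For pointwise convergence, pick any $x \in \Omega$; since $\Omega$ is open, $d := \mathrm{dist}(x,\partial\Omega) > 0$, hence for every $k > 1/d$ we have $B(x,1/k) \subseteq \Omega$ and therefore $\chi_k(x) = 0$ by the defining property of $\chi_k$. Thus $\chi_k \to 0$ pointwise on $\Omega$, and dominated convergence yields $\int_\Omega \chi_k \d(\partial_i u) \to 0$.

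Collecting the $n$ components, one obtains
\begin{equation*}
  \lim_{k\to+\infty} \int_\Omega u(x)\,\grad \chi_k(x) \d{x}
  =
  \int_{\partial\Omega} \tr_\Omega u(\xi)\,\nu(\xi) \d\xi,
\end{equation*}
which is the desired identity. No obstacle of substance is expected: the regularity hypothesis~{\bf($\boldsymbol{\Omega_{2,0}}$)} guarantees the Lipschitz boundary required by Definition~\ref{def:tr}, the bound $\sup_k \frac{1}{k}\norma{\grad \chi_k}_{\L\infty}<+\infty$ is not actually needed for this convergence (it ensures $\chi_k$ is a legitimate cutoff), and the main point is simply that $\chi_k$ concentrates on $\partial\Omega$ while being supported in a shrinking tubular neighborhood of it.
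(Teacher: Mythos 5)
Your proof is correct and follows essentially the same route as the paper: both apply Definition~\ref{def:tr} componentwise with the test field $\chi_k\,\mathbf{e}_i$, use $\chi_k=1$ on $\partial\Omega$, and conclude that $\int_\Omega \chi_k \d{(\nabla u)_i}\to 0$ by dominated convergence (the paper merely restricts the integrals to the tubular set $\Omega_k$ first, which is cosmetic). Your explicit pointwise-vanishing argument for $\chi_k$ on $\Omega$ is a fine way to justify that last limit.
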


\begin{proof}
  Let $e_i$ be the $i$-th vector of the standard basis in
  $\reali^n$. Set $\Omega_k = \left\{x \in \Omega \colon d
    (x, \partial\Omega) \leq 1/k\right\}.$ Then:
  \begin{eqnarray*}
    & &
    \left(
      \int_\Omega u (x) \grad \chi_k (x) \d{x}
      -
      \int_{\partial\Omega}
      \tr_\Omega u (\xi) \; \nu (\xi) \d\xi
      % \d{\mathcal{H}^{n-1} (\xi)}
    \right)
    \cdot e_i
    \\
    & = &
    \int_\Omega u (x) \, \partial_i\chi_k (x) \d{x}
    -
    \int_{\partial\Omega}
    \tr u (\xi) \; \chi_k (\xi) \, e_i \cdot \nu(\xi) \d\xi
    % \d{\mathcal{H}^{n-1} (\xi)}
    \\
    & = &
    \int_{\Omega_k} u (x) \, \partial_i\chi_k (x) \d{x}
    -
    \int_{\partial\Omega_k}
    \tr u (\xi) \; \chi_k (\xi) \, e_i \cdot \nu(\xi) \d\xi
    % \d{\mathcal{H}^{n-1} (\xi)}
    \\
    & = &
    - \int_{\Omega_k} \chi_k (x) \d{(\nabla u)_i (x)}
  \end{eqnarray*}
  where Definition~\ref{def:tr} was used to obtain the last
  expression. By the Dominated Convergence Theorem, $\lim_{k \to
    +\infty} \int_{\Omega_k} \chi_k (x) \d{(\nabla u)_i (x)} =0$,
  completing the proof.
\end{proof}

\noindent\textbf{Acknowledgment:} Both authors thank Boris Andreianov and Piotr Gwiazda for useful discussions. The present work was supported by
the PRIN~2012 project \emph{Nonlinear Hyperbolic Partial Differential
  Equations, Dispersive and Transport Equations: Theoretical and
  Applicative Aspects} and by the INDAM--GNAMPA~2014 project
\emph{Conservation Laws in the Modeling of Collective Phenomena}.

\small{

  \bibliography{ColomboRossi}

  \bibliographystyle{abbrv}

}
\end{document}